\numberwithin{equation}{section}
\numberwithin{table}{section} 
\numberwithin{figure}{section}
\newtheorem{theorem}{Theorem}[section]
\newtheorem{subtheorem}{Theorem}[theorem]
\newtheorem{lemma}[theorem]{Lemma}
\newtheorem{informallemma}[theorem]{Informal Lemma}
\newtheorem{conjecture}[theorem]{Conjecture}
\newtheorem{proposition}[theorem]{Lemma}
\newtheorem{fact}[theorem]{Fact}
\newtheorem{remark}[theorem]{Remark}
\newtheorem{definition}[theorem]{Definition}
\theoremstyle{remark}
\newtheorem{claim}[subtheorem]{Claim}
\newcommand{\By}[2]{\overset{\mbox{\tiny{#1}}}{#2}}
\newcommand{\ByRef}[2]{   \By{\eqref{#1}}{#2} }
\newcommand{\eqBy}[1]{    \By{#1}{=} }
\newcommand{\lBy}[1]{     \By{#1}{<} }
\newcommand{\geBy}[1]{    \By{#1}{\ge} }
\newcommand{\eqByRef}[1]{ \ByRef{#1}{=} }
\newcommand{\leByRef}[1]{ \ByRef{#1}{\le} }
\let\sm\setminus
\let\subset\subseteq 
\let\supset\supseteq 
\let\epsilon\varepsilon
\let\sharp\#
\def\dcup{\dot\cup} 
\renewcommand{\leq}{\leqslant}
\renewcommand{\le}{\leqslant}
\renewcommand{\geq}{\geqslant}
\renewcommand{\ge}{\geqslant}
\let\oldmarginpar\marginpar
\renewcommand\marginpar[1]{\-\oldmarginpar[\raggedleft\footnotesize #1]%
{\raggedright\footnotesize #1}}
\newcommand{\HIDDENPROOF}[1]{
% \footnote{HIDDEN TEXT}
% \textsf{HIDDEN PROOF}
% %#1
% 
}
\newcommand{\HIDDENTEXT}[1]{
% \footnote{HIDDEN TEXT}\textsf{#1}
}
\title{The approximate Loebl--Koml\'os--S\'os Conjecture~II:\\ The rough structure of LKS graphs}
\author{Jan
Hladk\'y
\thanks{\emph{Corresponding author.} Institute of Mathematics, Academy of Science of the Czech Republic. \v Zitn\'a 25, 110 00, Praha, Czech Republic. The Institute of Mathematics of the Academy of Sciences of the Czech Republic is supported by RVO:67985840. Email:
\texttt{honzahladky@gmail.com}.
The research leading to these results has received funding from the People Programme (Marie Curie Actions) of the European Union's Seventh Framework Programme (FP7/2007-2013) under REA grant agreement umber 628974. Much of the work was done while supported by an EPSRC postdoctoral fellowship while affiliated with DIMAP and Mathematics Institute, University of
Warwick.}
\quad 
J\'anos Koml\'os\thanks{Department of Mathematics, Rutgers University, 110 Frelinghuysen Rd., Piscataway, NJ~08854-8019, USA} 
\quad 
Diana Piguet\thanks{Institute of Computer Science, Czech Academy of Sciences, Pod Vod\'arenskou v\v e\v z\'i 2, 182~07 Prague, Czech Republic. With institutional support RVO:67985807. Supported by the Marie Curie fellowship FIST, DFG grant TA 309/2-1,  Czech Ministry of Education project 1M0545, EPSRC award EP/D063191/1, and EPSRC Additional Sponsorship EP/J501414/1.
	The research leading to these results has received funding from the European Union Seventh
	Framework Programme (FP7/2007-2013) under grant agreement no. PIEF-GA-2009-253925.
    The work leading to this invention was supported by the European Regional Development Fund (ERDF), project ``NTIS -- New Technologies for Information Society'', European Centre of Excellence, CZ.1.05/1.1.00/02.0090.}
    \\ 
    Mikl\'os Simonovits\thanks{R\'enyi
    Institute, Budapest, Hungary. Supported by OTKA~78439, OTKA~101536, ERC-AdG.~321104} 
\quad 
Maya Stein\thanks{Department of Mathematical Engineering,
University of Chile, Santiago, Chile.  Supported by Fondecyt Iniciacion grant 11090141, Fondecyt Regular grant 1140766 and CMM Basal.}
\quad 
Endre Szemer\'edi\thanks{R\'enyi
	Institute, Budapest, Hungary. Supported by OTKA~104483 and ERC-AdG.~321104}}
\def\semiregular{regularized }
\def\semiregulars{regularized}
\def\Semiregular{Regularized }
\newcommand{\PARAMETERPASSING}[2]{{\mathrm{#1}\ref{#2}}}
\def\NN{\mathbb{N}}
\newcommand{\M}{\mathcal M}\newcommand{\C}{\mathcal C}\newcommand{\A}{\mathcal
A}\newcommand{\V}{\mathcal V}
\newcommand{\eps}{\epsilon}
\def\mindeg{\mathrm{mindeg}}
\def\maxdeg{\mathrm{maxdeg}}
\def\density{\mathrm{d}}
\def\neighbour{\mathrm{N}}
\newcommand{\treeclass}[1]{\mathbf{trees}({#1})}
\newcommand{\LKSgraphs}[3]{\mathbf{LKS}({#1},{#2},{#3})}
\newcommand{\LKSsmallgraphs}[3]{\mathbf{LKSsmall}({#1},{#2},{#3})}
\newcommand{\smallvertices}[3]{\mathbb{S}_{{#1},{#2}}({#3})}
\newcommand{\largevertices}[3]{\mathbb{L}_{{#1},{#2}}({#3})}
\newcommand{\JUSTIFY}[1]{\mbox{\tiny{(#1)}}\quad}
\def\Gcapt{G_\nabla}
\def\GD{G_{\mathcal{D}}}
\def\Gblack{G_{\mathrm{reg}}}
\def\Gexp{G_{\mathrm{exp}}}
\def\BGblack{\mathbf{G}_{\mathrm{reg}}}
\def\smallatoms{\mathbb{E}}
\def\clusters{\mathbf{V}}
\def\class{\nabla}
\def\HugeVertices{\mathbb{H}}
\def\DenseSpots{\mathcal{D}}
\def\shrubA{\mathcal S_{A}}
\def\shrubB{\mathcal S_{B}}
\def\XA{\mathbb{XA}}
\def\XB{\mathbb{XB}}
\def\XC{\mathbb{XC}}
\def\BS{\mathbf S} 
\def\BL{\mathbf L}
\def\BSN{\mathbf S^0} 
\def\BSI{\mathbf S^{\mathrm{I}}} 
\def\BSR{\mathbf S^\mathrm{R}}
\def\SEPARATOR{\mathbf Q} 
\def\SR{S^{\mathrm R}} 
\def\SN{S^{0}}
\def\Mgood{\M_{\mathrm{good}}}
\def\NAtom{{\mathcal N_{\smallatoms}}}
\def\clustersize{\mathfrak{c}}
\renewcommand{\today}{}
\date{}
\begin{document}
\pagenumbering{roman}
\maketitle
\begin{abstract}
This is the second of a series  of four papers in which
we prove the following relaxation of the
Loebl--Koml\'os--S\'os Conjecture: For every~$\alpha>0$
there exists a number~$k_0$ such that for every~$k>k_0$
 every $n$-vertex graph~$G$ with at least~$(\frac12+\alpha)n$ vertices
of degree at least~$(1+\alpha)k$ contains each tree $T$ of order~$k$ as a
subgraph. 

In the first paper of the series, we gave a
decomposition of the graph~$G$  into several parts of different characteristics; this decomposition might be viewed as an analogue of a regular partition for sparse graphs. 
In the present paper, we find a combinatorial structure inside this decomposition. In the last two papers, we refine the structure and use it for embedding the tree~$T$.
\end{abstract}

\bigskip\noindent
{\bf Mathematics Subject Classification: } 05C35 (primary), 05C05 (secondary).\\
{\bf Keywords: }extremal graph theory; Loebl--Koml\'os--S\'os Conjecture; tree embedding; regularity lemma; sparse graph; graph decomposition.

\newpage

\rhead{\today}
%\rhead{Loebl-Koml\'os-S\'os Conjecture}
%\rhead{}

\tableofcontents
\newpage
\pagenumbering{arabic}
\setcounter{page}{1}

\section{Introduction}\label{sec:intro}

This is the second of a series of four papers~\cite{cite:LKS-cut0, cite:LKS-cut1, cite:LKS-cut2, cite:LKS-cut3} 
in which we provide an approximate solution of the Loebl--Koml\'os--S\'os Conjecture. The conjecture reads as follows.
 
\begin{conjecture}[Loebl--Koml\'os--S\'os Conjecture 1995~\cite{EFLS95}]\label{conj:LKS}
Suppose that $G$ is an $n$-vertex graph with at least $n/2$ vertices of degree more than $k-2$. Then $G$ contains each tree of order $k$.
\end{conjecture}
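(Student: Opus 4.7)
The statement to prove is the full Loebl--Komlós--Sós conjecture, which imposes the tight degree-sequence condition (at least $n/2$ vertices of degree strictly more than $k-2$) and demands containment of every $k$-vertex tree. This remains a long-standing open problem; the present series of papers only delivers an approximate version with slack $\alpha$ both in the fraction of large-degree vertices and in the degree threshold. My plan is therefore to describe the natural strategy that starts from the machinery developed here and to point out precisely where one would need genuinely new ideas to close the gap to the exact bound.

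First, I would apply the sparse decomposition from Paper~I to the host graph~$G$, obtaining a regular-like partition $\clusters$ together with the lists $\HugeVertices$ of huge-degree vertices, $\DenseSpots$ of dense spots, and the expander piece $\Gexp$. The work of the present paper (Paper~II) would then extract the rough combinatorial skeleton of the LKS graph on top of this decomposition. For the \emph{exact} conjecture, the task is to sharpen each piece so that the losses of order $\alpha k$ disappear: any bipartition of the host graph into vertices of large degree and their neighbours must, after decomposition, be matched with regular or expanding parts whose total capacity is at least $k$, not merely $(1-\alpha)k$.

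Second, I would decompose the tree $T$ as in the Piguet--Stein strategy (and in Papers~III--IV) into internal subtrees connected by bare paths or matching-like fragments. Depending on the shape of $T$ one embeds either into a dense regular spot of order at least $k$, into an expanding component $\Gexp$, or into the bipartite structure formed by the $\ge n/2$ large-degree vertices and their neighbourhoods. In each subcase one would then have to push the embedding through with essentially zero loss, which forces a greedy insertion of each vertex using only the bare inequality $\semideg(v)\ge k-1$, rather than the comfortable $(1+\alpha)k$ slack used throughout this series.

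The main obstacle is the sharpness of the extremal examples. The conjecture is essentially tight on disjoint unions of cliques of order $k$, on blow-ups of graphs on $k-1$ vertices, and on various hybrid constructions; a robust proof must recognise these \emph{near-extremal} configurations inside the decomposition and handle them by ad~hoc combinatorial arguments, since any regularity-based embedding inevitably loses a fraction of the degree budget. Cataloguing these near-extremal structures, and proving that outside of them one has enough room to absorb the lost $\alpha k$, is the step I expect to be the genuinely hard part, and it appears to be precisely the obstruction that forces the present series to settle for the approximate version rather than the conjecture as stated.
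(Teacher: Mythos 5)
You correctly recognised that the statement in question is Conjecture~\ref{conj:LKS} itself, which is not proved anywhere in this paper or in the rest of the series; the series establishes only the approximate version, Theorem~\ref{thm:main}, with slack $\alpha$ in both the count of high-degree vertices and the degree threshold. Since the paper offers no proof of the exact conjecture, there is nothing to compare your text against, and your assessment of the situation is accurate: the exact statement was open at the time of writing, and the tools developed here (the sparse decomposition of Paper~I, the rough structure of Lemma~\ref{prop:LKSstruct}, and the embedding machinery of Papers~III--IV) are built with a multiplicative error budget of order $\alpha$ that they consume at several points (regularisation losses, the passage to $\LKSsmallgraphs{n}{k}{\eta}$, the $\eta kn$ of uncaptured edges, the $\epsilon n$ discrepancies in the augmented matchings). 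Your diagnosis of the obstruction---that closing the gap would require a stability analysis identifying near-extremal configurations and handling them separately, since a regularity-based embedding necessarily wastes a positive fraction of the degree budget---is the standard and correct explanation for why exact versions of such tree-embedding conjectures lie beyond the direct reach of this machinery.

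One small addendum worth flagging: the paper itself implicitly restricts to large $k$ (through the quantifier $k>k_0$), and even the exact LKS conjecture would presumably first be attacked in that regime; your discussion is framed in the same spirit, so no issue there. But you should be explicit that, even granting $k$ large, the step you call ``greedy insertion using only $\deg(v)\ge k-1$'' is not a local refinement of the present embedding argument: the shrub-embedding strategy in Section~\ref{ssec:motivation} relies on the surplus $\alpha k$ precisely to absorb the mismatch between the order of a partially filled regular pair and the order of the shrub being inserted, and removing that surplus changes the global accounting, not just a single greedy step. So the ``hard part'' you identify is real, but it is more pervasive than a single inequality being tight.
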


We discuss the history and state of the art in detail  in the first paper~\cite{cite:LKS-cut0} of our series. The main result, which will be proved in~\cite{cite:LKS-cut3}, is the 
approximate solution of the Loebl--Koml\'os--S\'os Conjecture.

\begin{theorem}[Main result~\cite{cite:LKS-cut3}]\label{thm:main}
For every $\alpha>0$ there exists a number $k_0$ such that for any
$k>k_0$ we have the following. Each $n$-vertex graph $G$ with at least
$(\frac12+\alpha)n$ vertices of degree at least $(1+\alpha)k$ contains each tree $T$ of
order $k$.
\end{theorem}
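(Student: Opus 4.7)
The plan is to execute the full four-paper program: decompose $G$ in paper~I, extract a coarse combinatorial skeleton in this paper, sharpen it to a clean embedding configuration in paper~III, and finally embed $T$ into that configuration in paper~IV. Throughout we work with a fixed counterexample (or near-counterexample) of minimum order; in particular we may assume the size bound $e(G)=O(kn)$ and concentrate on the set $\BL$ of \emph{large vertices}, i.e.\ those of degree at least $(1+\alpha)k$, which has size at least $(\tfrac12+\alpha)n$ by hypothesis.

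First I would invoke the sparse decomposition developed in the first paper, writing $G$ (after removing negligible exceptional sets) as the edge-disjoint union of (i) an expander-like part $\Gexp$, (ii) a regular part $\Gblack$ obtained from a Szemer\'edi-type partition applied only to the dense pockets, (iii) a collection $\DenseSpots$ of highly bipartite-dense spots, and (iv) a small set $\HugeVertices$ of huge-degree vertices. Since $|\BL|\geq(\tfrac12+\alpha)n$ while the exceptional residue is of lower order in~$n$, the edges incident to $\BL$ must be absorbed, up to $o(kn)$, by one of these four parts. A pigeonhole/averaging argument then identifies classes $\XA,\XB,\XC$ of clusters according to where a given large vertex sends the bulk of its edges (regular pairs, expander, or dense-spot/huge-vertex channels).

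Second, in the present paper I would upgrade this quantitative concentration to a qualitative combinatorial structure on the cluster graph. Counting the large-vertex endpoints inside each class yields a weighted cluster-level structure to which Hall-type and LP-duality arguments apply; they output one of a short list of configurations---typically a large fractional matching in $\Gblack$ whose matched pairs host many large vertices, or an expander of the right size anchored on $\BL$, or a huge-vertex/dense-spot configuration with enough capacity. Paper~III would then promote these rough structures to \emph{\semiregular} configurations: matchings become super-regular matched pairs, expanders receive a controlled band of large vertices on their interface, and the capacities are tuned to $(1+o(1))k$.

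Finally, in paper~IV the tree $T$ is cut at internal cut vertices into small \emph{shrubs} $\shrubs$ joined along a short skeleton; the skeleton is embedded greedily through the matched pair (or expander) using large vertices of the refined configuration as anchors, while the shrubs are distributed into clusters and embedded by the standard regularity-based greedy procedure, with the huge vertices and dense spots acting as overflow absorbers for unusually heavy shrubs. The main obstacle is sparsity control: in this regime we cannot afford to waste $\Omega(k)$ vertices, so the rough structure produced here and the refinement of paper~III must be calibrated tightly enough that parameters survive three successive passes between papers without the accumulation of prohibitive error terms, and so that every tree---path-like as well as leaf-heavy, balanced as well as unbalanced---fits a single one of the final configurations uniformly.
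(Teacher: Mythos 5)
Your proposal accurately reproduces the four-stage architecture of the series (sparse decomposition, rough structure, fine structure, embedding), but it misstates or skips the key technical content of \emph{this} paper in a way that would break the argument.

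First, the sparse decomposition you describe has only four parts; it omits the avoiding set $\smallatoms$, which is one of the five building blocks of Definition~\ref{sparseclassdef} and is essential as an overflow region for embedding shrubs (Property~\eqref{defBC:avoiding}). Without $\smallatoms$, the pigeonhole step ``the edges incident to $\BL$ must be absorbed by one of these parts'' does not balance. Second, the sets $\XA,\XB,\XC$ are not ``classes of clusters'' sorted by where a large vertex's edges go; they are sets of large-degree \emph{vertices}, and crucially, they are defined \emph{after} choosing a pair of regularized matchings $\M_A,\M_B$ (Definition~\ref{def:XAXBXC}), so they cannot be read off by mere averaging as your sketch suggests.

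The more serious gap is the step where you propose to extract a matching/expander configuration by applying ``Hall-type and LP-duality arguments'' to the cluster graph $\BGblack$ of the decomposition, deferring all regularization refinement to paper~III. Section~\ref{ssec:whyaugment} and Figure~\ref{fig:whyEnhancing} exhibit a graph $G\in\LKSgraphs{n}{k}{1/10}$ where $\BGblack$ is \emph{empty}, so no Gallai--Edmonds/Hall-type argument on $\BGblack$ can produce the required matching. The resolution --- and the main technical contribution of the present paper --- is the re-regularization carried out \emph{within this paper}, not in paper~III: Lemma~\ref{lem:Separate} iteratively augments a regularized matching inside the dense spots $\DenseSpots$ (via the $(\delta,s,t)$-augmenting paths of Definition~\ref{altPath} and the iterated step in Lemma~\ref{lem:AugmentORSeparate}), producing $\M_A,\M_B$ that need not live on edges of $\BGblack$ at all. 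Only then does Gallai--Edmonds (Theorem~\ref{thm:GallaiEdmonds}) enter, and only to seed the initial matching $\M$ that Lemma~\ref{lem:Separate} improves. Your sketch would stall exactly at the point where the paper needs to leave the cluster graph.
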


In the first paper~\cite{cite:LKS-cut0} we exposed the techniques we use to decompose the host graph. In particular, we saw in \cite[Lemma~\ref{p0.lem:LKSsparseClass}]{cite:LKS-cut0} that any graph satisfying the assumptions of Theorem~\ref{thm:main} may be decomposed into a set of huge degree vertices, regular pairs, an expanding subgraph, and another set with certain expansion properties, which we call the avoiding set. We call this a \emph{sparse decomposition} of a graph. We will recall the necessary notions from~\cite{cite:LKS-cut0} in Section~\ref{sec:FromPaper0}.

Many embedding problems for dense host graphs  are attacked using the following three-step approach: \emph{(a)} the regularity lemma is applied to the host graph, \emph{(b)} a suitable combinatorial structure is found in the cluster graph, and \emph{(c)} the target graph is embedded into the combinatorial structure using properties of regular pairs. If we consider the sparse decomposition as a sparse counterpart to~\emph{(a)} then the main result of the present paper, Lemma~\ref{prop:LKSstruct}, should be regarded as a counterpart to~\emph{(b)}. More precisely, for each graph satisfying the assertions of Theorem~\ref{thm:main} that is given together with its sparse decomposition, Lemma~\ref{prop:LKSstruct} gives a combinatorial structure whose building blocks are the elements of the sparse decomposition. As in tree embedding problems in the dense setting (e.g. in~\cite{AKS95,PS07+}), the core of this combinatorial structure is a well-connected matching consisting of regular pairs. We call such matchings \emph{\semiregulars}. 

With the structure given by Lemma~\ref{prop:LKSstruct}, one can convince oneself that the tree~$T$ from Theorem~\ref{thm:main} can be embedded into the host graph, and indeed we provide such motivation in Section~\ref{ssec:motivation}. However, the rigorous argument is far from trivial. One needs to refine the structure found here, which is done in~\cite{cite:LKS-cut2}. For this reason, we call the output of Lemma~\ref{prop:LKSstruct} {\it the rough structure}.
In the last paper~\cite{cite:LKS-cut3} of our series we will develop embedding techniques for trees, and finally prove Theorem~\ref{thm:main}.

\section{Notation and preliminaries}\label{sec:preliminaries}

\subsection{General notation}

The set $\{1,2,\ldots, n\}$ of the first $n$ positive integers is
denoted by \index{mathsymbols}{*@$[n]$}$[n]$. 
%Suppose that we have a nonempty set $A$, and $\mathcal X$ and $\mathcal Y$ each partition $A$. Then~\index{mathsymbols}{*@$\boxplus$}$\boxplus$ denotes the coarsest common refinement of $\mathcal X$ and $\mathcal Y$, i.e., $$\mathcal X\boxplus\mathcal Y:=\{X\cap Y\::\: X\in \mathcal X, Y\in\mathcal Y\}\setminus \{\emptyset\}\;.$$
%\DIANA{$\boxplus$ : used only in the sparse decomposition part, see regularity part}
 We frequently employ indexing by many indices. We write
superscript indices in parentheses (such as $a^{(3)}$), as
opposed to notation of powers (such as $a^3$).
% DDD THIS IS USED BEFORE THE ACTUAL DEFINITION
We use sometimes subscripts to refer to
parameters appearing in a fact/lemma/theorem. For example
$\alpha_\PARAMETERPASSING{T}{thm:main}$ refers to the parameter $\alpha$ from Theorem~\ref{thm:main}.
We omit rounding symbols when this does not affect the
correctness of the arguments.

Table~\ref{tab:notation} shows the system of notation we use in the series.
\begin{table}[h]
\centering
\caption{Specific notation used in the series.}
\label{tab:notation}
\begin{tabular}{r|l}
\hline
lower case Greek letters &  small positive constants ($\ll 1$)\\
                         & $\phi$ reserved for embedding; $\phi:V(T)\rightarrow V(G)$\\
\hline
upper case Greek letters & large positive constants ($\gg 1$)\\
\hline
one-letter bold& sets of clusters \\
\hline
bold (e.g., $\treeclass{k},\LKSgraphs{n}{k}{\eta}$)& classes of graphs\\
\hline
blackboard bold (e.g., $\HugeVertices,\smallatoms,\smallvertices{\eta}{k}{G},\XA$)& distinguished vertex sets except for\\
& $\NN$ which denotes the set $\{1,2,\ldots\}$\\
\hline 
script  (e.g., $\mathcal A,\mathcal D,\mathcal N$)& families (of vertex sets, ``dense spots'', and regular pairs)\\
\hline
$\class$(=nabla)&sparse decomposition (see Definition~\ref{sparseclassdef})\\
\hline
\end{tabular}
\end{table}

 We write \index{mathsymbols}{*VG@$V(G)$}$V(G)$ and \index{mathsymbols}{*EG@$E(G)$}$E(G)$ for the vertex set and edge set of a graph $G$, respectively. Further, \index{mathsymbols}{*VG@$v(G)$}$v(G)=|V(G)|$ is the order of $G$, and \index{mathsymbols}{*EG@$e(G)$}$e(G)=|E(G)|$ is its number of edges. If $X,Y\subset V(G)$ are two, not necessarily disjoint, sets of vertices we write \index{mathsymbols}{*EX@$e(X)$}$e(X)$ for the number of edges induced by $X$, and \index{mathsymbols}{*EXY@$e(X,Y)$}$e(X,Y)$ for the number of ordered pairs $(x,y)\in X\times Y$ such that $xy\in E(G)$. In particular, note that $2e(X)=e(X,X)$.

\index{mathsymbols}{*DEG@$\deg$}\index{mathsymbols}{*DEGmin@$\mindeg$}\index{mathsymbols}{*DEGmax@$\maxdeg$}
For a graph $G$, a vertex $v\in V(G)$ and a set $U\subset V(G)$, we write
$\deg(v)$ and $\deg(v,U)$ for the degree of $v$, and for the number of
neighbours of $v$ in $U$, respectively. We write $\mindeg(G)$ for the minimum
degree of $G$, $\mindeg(U):=\min\{\deg(u)\::\: u\in U\}$, and
$\mindeg(V_1,V_2)=\min\{\deg(u,V_2)\::\:u\in V_1\}$ for two sets $V_1,V_2\subset
V(G)$. Similar notation is used for the maximum degree, denoted by $\maxdeg(G)$.
The neighbourhood of a vertex $v$ is denoted by
\index{mathsymbols}{*N@$\neighbour(v)$}$\neighbour(v)$. We set $\neighbour(U):=\bigcup_{u\in
U}\neighbour(u)$. The symbol $-$ is used
for two graph operations: if $U\subset V(G)$ is a vertex
set then $G-U$ is the subgraph of $G$ induced by the set
$V(G)\setminus U$. If $H\subset G$ is a subgraph of $G$ then the graph
$G-H$ is defined on the vertex set $V(G)$ and corresponds
to deletion of edges of $H$ from $G$.
%A subgraph $H\subset G$ of a graph $G$ is called \emph{spanning}\index{general}{spanning subgraph} if $V(H)=V(G)$.
%\DIANA{the defnition of spanning should be moved to intro1.tex}
%The \index{general}{null graph}\emph{null graph} is the unique graph on zero vertices, while any graph with zero edges is called \index{general}{empty graph}\emph{empty}.
 Any graph with zero edges is called \index{general}{empty graph}\emph{empty}.
A family $\mathcal A$ of pairwise disjoint subsets of $V(G)$ is an \index{general}{ensemble}\index{mathsymbols}{*ENSEMBLE@$\ell$-ensemble}\emph{$\ell$-ensemble in $G$} if  $|A|\ge \ell$ for each $A\in\mathcal A$. 
%We say that $\mathcal A$ is \emph{inside $X$} (or \emph{outside $Y$}) if $A\subset X$ (or $A\cap Y=\emptyset$) for each $A\in\mathcal A$.

Finally, \index{mathsymbols}{*trees@$\treeclass{k}$}$\treeclass{k}$ denotes the class of all trees of order $k$.

\subsection{Regular pairs}

Given a graph $H$ and a pair $(U,W)$ of disjoint
sets $U,W\subset V(H)$ the
\index{general}{density}\index{mathsymbols}{*D@$\density(U,W)$}\emph{density of the pair $(U,W)$} is defined as
$$\density(U,W):=\frac{e(U,W)}{|U||W|}\;.$$
%Similarly, for a bipartite graph $G$ with colour
%classes $U$, $W$ we talk about its \index{general}{bipartite
%density}\emph{bipartite density}\index{mathsymbols}{*D@$\density(G)$} $\density(G)=\frac{e(G)}{|U||W|}$.
For a given $\varepsilon>0$, a pair $(U,W)$ of disjoint
sets $U,W\subset V(H)$ 
is called an \index{general}{regular pair}\emph{$\epsilon$-regular
pair} if $|\density(U,W)-\density(U',W')|<\epsilon$ for every
$U'\subset U$, $W'\subset W$ with $|U'|\ge \epsilon |U|$, $|W'|\ge
\epsilon |W|$. If the pair $(U,W)$ is not $\epsilon$-regular,
then we call it \index{general}{irregular}\emph{$\epsilon$-irregular}.

We shall need a useful and well-known property of
regular pairs.
\begin{fact}\label{fact:BigSubpairsInRegularPairs}
Suppose that $(U,W)$ is an $\varepsilon$-regular pair of density
$d$. Let $U'\subset W, W'\subset W$ be sets of vertices with $|U'|\ge
\alpha|U|$, $|W'|\ge \alpha|W|$, where $\alpha>\epsilon$.
Then the pair $(U',W')$ is a $2\varepsilon/\alpha$-regular pair of density at least
$d-\varepsilon$.
\end{fact}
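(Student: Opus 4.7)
The proof plan is essentially a double application of the $\varepsilon$-regularity condition of the original pair $(U,W)$, combined with the triangle inequality on densities.

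First I would handle the density claim, which is immediate. Since $|U'|\geq \alpha|U|\geq\varepsilon|U|$ and $|W'|\geq\alpha|W|\geq\varepsilon|W|$, applying the regularity of $(U,W)$ directly to the test sets $U',W'$ yields $|\density(U',W')-d|<\varepsilon$, and in particular $\density(U',W')\geq d-\varepsilon$.

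Next, for the regularity statement, I would take arbitrary test sets $U''\subset U'$ and $W''\subset W'$ satisfying $|U''|\geq (2\varepsilon/\alpha)|U'|$ and $|W''|\geq (2\varepsilon/\alpha)|W'|$, and show $|\density(U'',W'')-\density(U',W')|<2\varepsilon/\alpha$. The key observation is the size bound $|U''|\geq (2\varepsilon/\alpha)\cdot\alpha|U|=2\varepsilon|U|\geq\varepsilon|U|$, and analogously $|W''|\geq\varepsilon|W|$. Hence both $(U',W')$ and $(U'',W'')$ are legitimate pairs of test sets for $\varepsilon$-regularity inside $(U,W)$. Applying the regularity of $(U,W)$ twice gives $|\density(U',W')-d|<\varepsilon$ and $|\density(U'',W'')-d|<\varepsilon$. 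The triangle inequality then yields
\[
|\density(U'',W'')-\density(U',W')|<2\varepsilon\leq 2\varepsilon/\alpha,
\]
where the last inequality uses $\alpha\leq 1$ (which we may assume, since otherwise the hypothesis forces $U'=U, W'=W$ trivially).

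There is no substantive obstacle here; the only mild point to be careful about is that the factor $2$ in $2\varepsilon/\alpha$ (rather than just $\varepsilon/\alpha$) is exactly what lets the two applications of regularity and the triangle inequality go through cleanly, and that $\alpha\leq 1$ ensures the final bound $2\varepsilon\leq 2\varepsilon/\alpha$ in the required form.
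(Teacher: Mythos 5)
The paper states Fact~\ref{fact:BigSubpairsInRegularPairs} without proof, so there is no in-text argument to compare against; your proof is the standard one and it is correct. Both the density bound and the double application of $\varepsilon$-regularity plus the triangle inequality (with the observation that $\alpha\le 1$ may be assumed, since otherwise the hypotheses $U'\subset U$, $|U'|\ge\alpha|U|$ cannot be met non-trivially) go through exactly as you describe.
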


The regularity lemma~\cite{Sze78} has proved to be
a powerful tool for attacking graph embedding
problems; see~\cite{KuhnOsthusSurv} 
for a survey. 

\begin{lemma}[Regularity lemma]\label{lem:RL}
For all $\epsilon>0$ and $\ell\in\NN$ there exist $n_0,M\in\NN$
such that for every $n\ge n_0$ the following holds. Let $G$ be an $n$-vertex graph whose
vertex set is pre-partitioned into sets
$V_1,\ldots,V_{\ell'}$, $\ell'\le \ell$. Then there exists
a partition $U_0,U_1,\ldots,U_p$ of $V(G)$, $\ell<p<M$, with the following properties.
\begin{enumerate}[(1)]
\item For every $i,j\in [p]$ we have $|U_i|=|U_j|$, and  $|U_0|<\epsilon n$.
\item For every
$i\in [p]$ and every $j\in [\ell']$ either $U_i\cap
V_j=\emptyset$ or $U_i\subset
V_j$. 
\item All but at most
$\epsilon p^2$ pairs $(U_i,U_j)$, $i,j\in [p]$, $i\neq j$, are $\epsilon$-regular.
\end{enumerate}
\end{lemma}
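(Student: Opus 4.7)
The proof follows the classical energy-increment argument due to Szemerédi. The plan is to define a mean-square density functional (the ``energy'' or ``index'') on partitions of $V(G)$, show it is bounded in $[0,1]$, and exhibit a refinement operation that strictly increases the energy whenever the current partition violates the desired conclusion.

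For a partition $\mathcal{P}=\{A_1,\ldots,A_t\}$ of $V(G)$, define
$$q(\mathcal{P}) := \sum_{i,j=1}^{t} \frac{|A_i|\,|A_j|}{n^2}\,\density(A_i,A_j)^2,$$
where $\density(A_i,A_j) := e(A_i,A_j)/(|A_i||A_j|)$. By Cauchy--Schwarz, $q$ is non-decreasing under refinement, and $0\le q(\mathcal{P})\le 1$. The first step is the key defect inequality: if $(A,B)$ is not $\epsilon$-regular, witnessed by subsets $A'\subset A$, $B'\subset B$ whose density differs from $\density(A,B)$ by at least $\epsilon$, then refining $A$ into $\{A',A\setminus A'\}$ and $B$ into $\{B',B\setminus B'\}$ increases the contribution of this pair to $q$ by at least $\epsilon^4 |A||B|/n^2$. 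Summing over the at least $\epsilon p^2$ irregular pairs yields a total energy increment of at least $\epsilon^5$ in a single joint refinement.

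Next I would iterate, starting from the pre-partition $\{V_1,\ldots,V_{\ell'}\}$ as the initial partition. At each step, if fewer than $\epsilon p^2$ pairs are $\epsilon$-irregular we stop; otherwise, for each irregular pair we pick witnessing subsets and take the common refinement of all these witnesses, restricted so that every new part is contained in a unique existing part. This automatically maintains the refinement of $\{V_1,\ldots,V_{\ell'}\}$, and increases $q$ by at least $\epsilon^5$. Since $q\le 1$, the process terminates after at most $\epsilon^{-5}$ rounds. Each round multiplies the number of parts by at most $2^{t}$ where $t$ is the current number of parts; iterating this tower-type recursion $\epsilon^{-5}$ times gives an absolute bound $M=M(\epsilon,\ell)$ independent of $n$. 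To finish, I would equalize the parts by carving each current part into blocks of a common size $m$ chosen so that the leftover, collected into $U_0$, has size less than $\epsilon n$. Fact~\ref{fact:BigSubpairsInRegularPairs} guarantees that regularity is preserved under this equalization (up to a mild deterioration of the regularity constant, which is absorbed by running the entire argument with a slightly smaller $\epsilon$).

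The main obstacle is the simultaneous bookkeeping needed to obtain all three conclusions of the statement: property~(2) (refinement of the pre-partition), sustained automatically by only ever subdividing within existing parts once the initial partition is $\{V_1,\ldots,V_{\ell'}\}$; the $n$-independent upper bound $p<M$, which requires the quantitative energy increment to cap the number of iterations while tracking the tower growth of the part count; and the final equalization together with the small exceptional set $U_0$, which has to be carried out without violating the other two conclusions. Each individual ingredient is standard, but the careful packaging into the stated clean form is what takes work.
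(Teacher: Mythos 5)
The paper does not prove Lemma~\ref{lem:RL}; it is quoted as a standard result and attributed to~\cite{Sze78}. Your energy-increment outline is indeed the classical route, so there is no ``different approach from the paper'' to compare against. There is, however, a genuine gap in the way you schedule the equalization.

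You iterate the refinement starting from the raw pre-partition $\{V_1,\dots,V_{\ell'}\}$ and defer all equalization to a single step at the very end. This breaks the argument in two places. First, the termination bound: the defect inequality gives an increment of order $\epsilon^4|A_i||A_j|/n^2$ per irregular pair, and to turn ``at least $\epsilon p^2$ irregular pairs'' into a total increment $\ge \epsilon^5$ you must know that each $|A_i|\approx n/p$. With wildly uneven parts (e.g.\ $\ell'=1$, so the initial partition is $\{V(G)\}$) the weighted sum $\sum_{\text{irr.}}|A_i||A_j|/n^2$ can be far smaller than $\epsilon$, and the iteration need not terminate in $\epsilon^{-5}$ rounds. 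Second, the final equalization does not preserve the stopping condition: when you carve a large part $A_i$ into $|A_i|/m$ blocks of size $m$, the $\binom{|A_i|/m}{2}$ pairs of blocks lying inside the same $A_i$ inherit \emph{no} regularity from anything (Fact~\ref{fact:BigSubpairsInRegularPairs} only controls cross-pairs between distinct former parts). If some $|A_i|$ is a constant fraction of $n$, those intra-part pairs already exceed $\epsilon (n/m)^2$, so property~(3) fails for the equalized partition. The standard cure for both issues is the same: equalize \emph{at every step}. Begin by chopping each $V_j$ into blocks of a common size plus a small remainder collected into $U_0$; at each refinement round take the common refinement by the witnessing sets, re-chop into blocks of a new common size, and dump the leftovers into $U_0$. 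This keeps the partition equitable, so the $\epsilon^5$ increment bound is valid at each round, the intra-part pairs are fed back into the irregularity count (and hence eventually regularized), and $|U_0|<\epsilon n$ is maintained by a geometric-series bookkeeping over the $\le\epsilon^{-5}$ rounds. Your proposal has the right skeleton and the right defect inequality, but without interleaving the equalization you cannot close the termination and counting steps.
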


We shall use Lemma~\ref{lem:RL}  for auxiliary
purposes only as it is helpful only in the setting of dense
graphs (i.e., graphs which have $\ell$ vertices and
$\Omega(\ell^2)$ edges).

\subsection{LKS graphs}\index{mathsymbols}{*LKSgraphs@$\LKSgraphs{n}{k}{\eta}$}

It will be convenient to restrict our attention to a class of graphs which is in a way minimal for Theorem~\ref{thm:main}. Write 
\index{mathsymbols}{*LKSgraphs@$\LKSgraphs{n}{k}{\eta}$}$\LKSgraphs{n}{k}{\alpha}$
for the class of all $n$-vertex graphs with at least
$(\frac12+\alpha)n$ vertices of degrees at least
$(1+\alpha)k$. With this notation Conjecture~\ref{conj:LKS} states that every graph in $\LKSgraphs{n}{k}{0}$ contains every tree from $\treeclass{k+1}$.

Given a graph $G$, denote by
\index{mathsymbols}{*S@$\smallvertices{\eta}{k}{G}$}$\smallvertices{\eta}{k}{G}$ the set of those
vertices of $G$ that have degree less than $(1+\eta)k$ and by
\index{mathsymbols}{*L@$\largevertices{\eta}{k}{G}$}$\largevertices{\eta}{k}{G}$ the set of those
vertices of $G$ that have degree at least $(1+\eta)k$.
When proving Theorem~\ref{thm:main}, we may of course  restrict our attention to LKS-minimal graphs, that is, to graphs that are edge-minimal with respect to belonging to $\LKSgraphs{n}{k}{\alpha}$. It is easy to show that in each such graph the set $\smallvertices{\eta}{k}{G}$ is independent,  all the neighbours of every vertex $v\in V(G)$ with $\deg(v)>\lceil(1+\eta)k\rceil$ have degree exactly $\lceil(1+\eta)k\rceil$, and $|\largevertices{\eta}{k}{G}|\le\lceil
(1/2+\eta)n\rceil+1$. It turns out that our main decomposition result \cite[Lemma~\ref{p0.lem:LKSsparseClass}]{cite:LKS-cut0}  outputs a graph with slightly weaker properties than being LKS-minimal. Let us therefore introduce the following class of graphs.
\begin{definition}\label{def:LKSsmall}
Suppose that $n,k\in\mathbb N$, and $\eta>0$.
Let \index{mathsymbols}{*LKSsmallgraphs@$\LKSsmallgraphs{n}{k}{\eta}$}$\LKSsmallgraphs{n}{k}{\eta}$ be the class of those graphs $G\in\LKSgraphs{n}{k}{\eta}$ for which we have the following three properties:
\begin{enumerate}[(i)]
   \item\label{en:LKSsmall.def:LKSsmall} All the neighbours of every vertex $v\in V(G)$ with $\deg(v)>\lceil(1+2\eta)k\rceil$ have degrees at most $\lceil(1+2\eta)k\rceil$.\label{def:LKSsmallA}
   \item\label{en:LKSsmall.noSS} All the neighbours of every vertex of $\smallvertices{\eta}{k}{G}$
    have degree exactly $\lceil(1+\eta)k\rceil$. \label{def:LKSsmallB}
   \item We have $e(G)\le kn$.\label{def:LKSsmallC}
\end{enumerate}
\end{definition}

\section{Decomposing sparse graphs}\label{sec:FromPaper0}
In~\cite{cite:LKS-cut0} we introduced the notion of sparse decomposition, and proved that every graph can be (almost perfectly) decomposed. We define the sparse decomposition after introducing its basic building blocks: dense spots and avoiding sets. For motivation and more details we refer the reader to~\cite[Section~\ref{p0.ssec:class-black}]{cite:LKS-cut0}, of which  this section is a condensed version.

We start by defining dense spots. These are bipartite graphs having positive density, and will (among other things) serve as a basis for regularization.
\begin{definition}[\bf \index{general}{dense spot}$(m,\gamma)$-dense spot,
\index{general}{nowhere-dense}$(m,\gamma)$-nowhere-dense]\label{def:densespot} 
Suppose that $m\in\NN$ and $\gamma>0$.
An \emph{$(m,\gamma)$-dense spot} in a graph $G$ is a non-empty bipartite sub\-graph  $D=(U,W;F)$ of  $G$ with
$\density(D)>\gamma$ and $\mindeg (D)>m$. We call a graph $G$
\emph{$(m,\gamma)$-nowhere-dense} if it does not contain any $(m,\gamma)$-dense spot.

When the parameters $m$ and $\gamma$ are irrelevant, we refer to $D$ simply as a \emph{dense spot}.
\end{definition}
Note that dense spots do not have any 
specified orientation. That is, we view $(U,W;F)$ and $(W,U;F)$ as
the same object.

\begin{definition}[\bf $(m,\gamma)$-dense
cover]\index{general}{dense cover}
Suppose that $m\in\NN$ and $\gamma>0$.
 An \emph{$(m,\gamma)$-dense cover} of a given
graph $G$ is a family $\DenseSpots$ of edge-disjoint
$(m,\gamma)$-dense
spots such that $E(G)=\bigcup_{D\in\DenseSpots}E(D)$.
\end{definition}

We now define the avoiding set. Informally, a set $\smallatoms$ of vertices is avoiding if for each set $U$ of size up to $\Lambda k$ (where $\Lambda\gg 1$ is a large constant) and each vertex $v\in\smallatoms$ there is a dense spot containing $v$ and almost disjoint from $U$. Favourable properties of avoiding sets for embedding trees are shown in~\cite[Section~\ref{p0.sssec:whyavoiding}]{cite:LKS-cut0}.
\begin{definition}[\bf
\index{general}{avoiding}$(\Lambda,\epsilon,\gamma,k)$-avoiding set]\label{def:avoiding} 
Suppose that $k\in\NN$, $\epsilon,\gamma>0$ and $\Lambda>0$. Suppose that~$G$ is a graph and $\DenseSpots$ is a family of dense spots in $G$. A set
$\smallatoms\subset \bigcup_{D\in\DenseSpots} V(D)$ is \emph{$(\Lambda,\epsilon,\gamma,k)$-avoiding} with
respect to $\DenseSpots$ if for every $U\subset V(G)$ with $|U|\le \Lambda k$ the following holds for all but at most $\epsilon k$ vertices $v\in\smallatoms$. There is a dense spot $D\in\DenseSpots$ with $|U\cap V(D)|\le \gamma^2 k$ that contains $v$.
\end{definition}

We can now introduce an auxiliary notion of bounded decomposition on which we can build the key concept of sparse decomposition (see below). The main result in~\cite{cite:LKS-cut0} tells us that every graph has an almost perfect sparse decomposition. This sparse decomposition (and the bounded decomposition included in it) will provide us with control on the behaviour of the different edge and vertex sets involved, and thus be helpful to embed the tree.

\begin{definition}[\index{general}{bounded decomposition}{\bf
$(k,\Lambda,\gamma,\epsilon,\nu,\rho)$-bounded decomposition}]\label{bclassdef}
Suppose that $k\in\NN$ and $\epsilon,\gamma,\nu,\rho>0$ and $\Lambda>0$. 
Let $\mathcal V=\{V_1, V_2,\ldots, V_s\}$ be a partition of the vertex set of a graph $G$. We say that $( \clusters,\DenseSpots, \Gblack, \Gexp,
\smallatoms )$ is a {\em $(k,\Lambda,\gamma,\epsilon,\nu,\rho)$-bounded
decomposition} of $G$ with respect to $\mathcal V$ if the following properties
are satisfied:
\begin{enumerate}
\item\label{defBC:nowheredense}
$\Gexp$ is  a $(\gamma k,\gamma)$-nowhere-dense subgraph of $G$ with $\mindeg(\Gexp)>\rho k$.
\item\label{defBC:clusters} $\clusters$ is a family of  disjoint subsets of 
$ V(G)$.
\item\label{defBC:RL} $\Gblack$ is a subgraph of $G-\Gexp$ on the vertex set $\bigcup \clusters$. For each edge
 $xy\in E(\Gblack)$ there are distinct $C_x\ni x$ and $C_y\ni y$ from $\clusters$,
and  $G[C_x,C_y]=\Gblack[C_x,C_y]$. Furthermore, 
$G[C_x,C_y]$ forms an $\epsilon$-regular pair of  density at least $\gamma^2$.
\item We have $\nu k\le |C|=|C'|\le \epsilon k$ for all
$C,C'\in\clusters$.\label{Csize}
\item\label{defBC:densepairs}  $\DenseSpots$ is a family of edge-disjoint $(\gamma
k,\gamma)$-dense spots  in $G-\Gexp$.  For
each $D=(U,W;F)\in\DenseSpots$ all the edges of $G[U,W]$ are covered
by $\DenseSpots$ (but not necessarily by $D$).
\item\label{defBC:dveapul} If  $\Gblack$
contains at least one edge between $C_1,C_2\in\clusters$ then there exists a dense
spot $D=(U,W;F)\in\DenseSpots$ such that $C_1\subset U$ and $C_2\subset
W$.
\item\label{defBC:prepartition}
For
all $C\in\clusters$ there is a set $V\in\mathcal V$ so that either $C\subseteq V\cap V(\Gexp)$ or $C\subseteq V\setminus V(\Gexp)$.
For
all $C\in\clusters$ and $D=(U,W; F)\in\DenseSpots$ we have $C\cap U,C\cap W\in\{\emptyset, C\}$.
\item\label{defBC:avoiding}
$\smallatoms$ is a $(\Lambda,\epsilon,\gamma,k)$-avoiding subset  of
$V(G)\setminus \bigcup \clusters$ with respect to the family of dense spots $\DenseSpots$.
\end{enumerate}

\smallskip
We say that the bounded decomposition $(\clusters,\DenseSpots, \Gblack, \Gexp,
\smallatoms )$ {\em respects the avoiding threshold~$b$}\index{general}{avoiding threshold} if for each $C\in \clusters$ we either have $\maxdeg_G(C,\smallatoms)\le b$, or $\mindeg_G(C,\smallatoms)> b$.
\end{definition}

The members of $\clusters$ are called \index{general}{cluster}{\it clusters}. Define the
{\it cluster graph} \index{mathsymbols}{*Gblack@$\BGblack$}  $\BGblack$ as the graph
on the vertex set $\clusters$ that has an edge $C_1C_2$
for each pair $(C_1,C_2)$ which has density at least $\gamma^2$ in the graph
$\Gblack$. Further, we define the graph \index{mathsymbols}{*GD@$\GD$}$\GD$ as the union (both edge-wise, and vertex-wise) of all dense spots~$\DenseSpots$. 
%
%Property~\ref{defBC:prepartition} tells us that the clusters may be prepartitioned, just as it is the case in the classic Regularity Lemma. When classifying the graph $G_\PARAMETERPASSING{T}{thm:main}$ in Lemma~\ref{lem:LKSsparseClass} below we shall use the prepartition into (roughly) $\smallvertices{\alpha_\PARAMETERPASSING{T}{thm:main}}{k}{G_\PARAMETERPASSING{T}{thm:main}}$ and $\largevertices{\alpha_\PARAMETERPASSING{T}{thm:main}}{k}{G_\PARAMETERPASSING{T}{thm:main}}$.
%
%As said above, the notion of bounded decomposition is needed for our Regularity
%Lemma type decomposition given in
%Lemma~\ref{lem:decompositionIntoBlackandExpanding}. It turns out that such a
%decomposition is possible only when the graph is of moderate maximum degree. On
%the other hand, Lemma~\ref{prop:gap} tells us that the vertex set of any
%graph\footnote{Lemma~\ref{prop:gap} is stated only for graphs from
%$\LKSmingraphs{n}{k}{\eta}$, but a similar statement can be made about any
%graph. See discussion in the outline of the proof of Lemma~\ref{lem:genericBD}.} can be decomposed into vertices of enormous degree and moderate degree.
%The graph induced by the latter type of vertices then admits the decomposition from
%Lemma~\ref{lem:decompositionIntoBlackandExpanding}. 

We now
enhance the structure of bounded decomposition by adding one new feature: vertices of very large degree.

\begin{definition}[\bf \index{general}{sparse
decomposition}$(k,\Omega^{**},\Omega^*,\Lambda,\gamma,\epsilon,\nu,\rho)$-sparse decomposition]\label{sparseclassdef}
Suppose that $k\in\NN$ and $\epsilon,\gamma,\nu,\rho>0$ and $\Lambda,\Omega^*,\Omega^{**}>2$. 
Let $\mathcal V=\{V_1, V_2,\ldots, V_s\}$ be a partition of the vertex set of a graph $G$. We say that 
$\class=(\HugeVertices, \clusters,\DenseSpots, \Gblack, \Gexp, \smallatoms )$
is a 
{\em $(k,\Omega^{**},\Omega^*,\Lambda,\gamma,\epsilon,\nu,\rho)$-sparse decomposition} of $G$
with respect to $V_1, V_2,\ldots, V_s$ if the following hold.
\begin{enumerate}
\item\label{def:classgap} $\HugeVertices\subset V(G)$,
$\mindeg_G(\HugeVertices)\ge\Omega^{**}k$,
$\maxdeg_H(V(G)\setminus \HugeVertices)\le\Omega^{*}k$, where $H$ is spanned by the edges of $\bigcup\DenseSpots$, $\Gexp$, and
edges incident with $\HugeVertices$,
\item \label{def:spaclahastobeboucla} $( \clusters,\DenseSpots, \Gblack,\Gexp,\smallatoms)$ is a 
$(k,\Lambda,\gamma,\epsilon,\nu,\rho)$-bounded decomposition of
$G-\HugeVertices$ with respect to $V_1\setminus \HugeVertices, V_2\setminus \HugeVertices,\ldots, V_s\setminus \HugeVertices$.
\end{enumerate}
\end{definition}

If the parameters do not matter, we call $\class$ simply a {\em sparse
decomposition}, and similarly we speak about a {\em bounded decomposition}. 

 \begin{definition}[\bf \index{general}{captured edges}captured edges]\label{capturededgesdef}
In the situation of Definition~\ref{sparseclassdef}, we refer to the edges in
$ E(\Gblack)\cup E(\Gexp)\cup
E_G(\HugeVertices,V(G))\cup E_G(\smallatoms,\smallatoms\cup \bigcup \clusters)$
as \index{general}{captured edges}{\em captured} by the sparse decomposition. 
 We write
\index{mathsymbols}{*Gclass@$\Gcapt$}$\Gcapt$ for the subgraph of $G$ on the vertex set $V(G)$ which consists of the captured edges.
Likewise, the captured edges of a bounded decomposition
$(\clusters,\DenseSpots, \Gblack,\Gexp,\smallatoms )$ of a graph $G$ are those
in $E(\Gblack)\cup E(\Gexp)\cup E_{\GD}(\smallatoms,\smallatoms\cup\bigcup\clusters)$.
\end{definition}

It will be useful to have the following shorthand notation at hand.

\begin{definition}[$\mathcal G(n,k,\Omega,\rho,\nu, \tau)$ and $\bar{\mathcal G}(n,k,\Omega,\rho,\nu)$]\label{tupelclass}
Suppose that $k,n\in\NN$ and $\nu,\rho,\tau>0$ and $\Omega>0$. 
We define $\mathcal G(n,k,\Omega,\rho,\nu, \tau)$\index{mathsymbols}{*G@$\mathcal G(n,k,\Omega,\rho,\nu, \tau)$} to be the class of all quadruple $(G,\DenseSpots,H,\mathcal A)$ with the following properties:
\begin{enumerate}[(i)]
\item  $G$ is a graph of order $n$ with $\maxdeg(G)\le \Omega k$,\label{maxroach}
\item $H$ is a bipartite subgraph of
$G$ with colour classes $A_H$ and $B_H$ and with $e(H)\ge \tau kn$,\label{duke}
\item  $\DenseSpots$ is a $(\rho k, \rho)$-dense cover of $G$,
\item $\mathcal A$ is a $(\nu k)$-ensemble in $G$,
and $A_H\subseteq \bigcup \mathcal A$,\label{bird}
\item  $A\cap U,A\cap W\in\{\emptyset,A\}$ for each $A\in\mathcal A$ and for each $D=(U,W;F)\in\DenseSpots$.\label{mingus}
\end{enumerate}
Those $G$, $\mathcal D$ and $\mathcal A$ satisfying all conditions but~\eqref{duke} and the last part of~\eqref{bird} will make up the triples $(G,\DenseSpots,\mathcal A)$ of the class  $\bar{\mathcal G}(n,k,\Omega,\rho,\nu)$\index{mathsymbols}{*G@$\bar{\mathcal G}(n,k,\Omega,\rho,\nu)$}.
\end{definition}

\section{Augmenting a matching}\label{sec:augmenting}
In previous papers~\cite{AKS95,Z07+,PS07+,Cooley08,HlaPig:LKSdenseExact}
concerning the LKS~Conjecture in the dense setting the crucial turn was to find
a matching in the cluster graph of the host graph possessing certain properties.
We will prove a similar ``structural result'' in Section~\ref{sec:LKSStructure}.
In the present section, we prove the main tool for
Section~\ref{sec:LKSStructure}, namely Lemma~\ref{lem:Separate}. All statements preceding Lemma~\ref{lem:Separate} are only preparatory. The only exception is (the easy) Lemma~\ref{lem:edgesEmanatingFromDensePairsIII} which is recycled later, in~\cite{cite:LKS-cut2}.

\subsection{\Semiregular matchings}

We prove our first auxiliary lemma on our way towards
Lemma~\ref{lem:Separate}.

\begin{lemma}\label{lem:edgesEmanatingFromDensePairsII}
For every $\Omega\in\NN$ and $\epsilon,\rho, \tau>0$ 
there is a number $\alpha>0$ such that for every $\nu\in(0,1)$
there exists a number $k_0\in\NN$ such
that for each $k>k_0$ the following holds. 

For every $(G,\DenseSpots,H,\mathcal A)\in\mathcal
G(n,k,\Omega,\rho,\nu,\tau)$ there are
$(U,W;F)\in\DenseSpots$, $A\in\mathcal A$
and $X,Y\subseteq V(G)$ such that 
\begin{enumerate}[(1)]
  \item $|X|=|Y|\ge\alpha\nu k$,
  \item  $X\subset A\cap U\cap A_H$ and $Y\subset W\cap B_H$, where $A_H$ and
  $B_H$ are the colour classes of $H$, and
  \item $(X,Y)$ is an $\epsilon$-regular pair in $G$ of density
  $\density(X,Y)\ge \frac{\tau\rho}{4\Omega}$.
\end{enumerate}
\end{lemma}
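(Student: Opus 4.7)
The plan is to double-count $H$-edges across pairs $(A,D) \in \mathcal A \times \DenseSpots$ with $A \subseteq V(D)$, isolate one such pair on which the bipartite slab $(A \cap A_H,\, W^*_D(A) \cap B_H)$ is simultaneously $H$-dense and not too unbalanced, and then feed this slab into Lemma~\ref{lem:RL} to produce $(X,Y)$. Here $W^*_D(A)$ denotes the side of $D$ opposite to $A$. The final constant will be $\alpha := c_0/M$, with $c_0 := \tau\rho/(2\Omega^2)$ coming out of the averaging and $M$ furnished by Lemma~\ref{lem:RL} applied with $\ell = 2$ and a suitable $\epsilon_{\mathrm{RL}} := \min(\epsilon, \tau\rho/(64\Omega))$, both depending only on $\epsilon, \Omega, \rho, \tau$.

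As preparation I would record two size bounds from the dense-cover setup: every $D = (U,W;F) \in \DenseSpots$ satisfies $|U|, |W| \le \Omega k/\rho$ (since $\rho|U||W| < |F| \le |U|\Omega k$ and symmetrically), and every vertex of $G$ lies in at most $\Omega/\rho$ dense spots (the $>\rho k$ spot-edges it contributes are edge-disjoint across spots). Setting $T := \{(A,D) : A \in \mathcal A,\, A \subseteq V(D)\}$, and using $\sum_D |F_D| = e(G) \le n\Omega k/2$ together with $|F_D| > \rho|U_D||W_D|$, these immediately yield the two double-counting bounds $|T| \le |\mathcal A|\cdot\Omega/\rho \le n\Omega/(\nu k\rho)$ and $\sum_{(A,D) \in T} |A|\cdot|W^*_D(A)| \le 2\sum_D |U_D||W_D| < n\Omega k/\rho$.

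Next, property~\eqref{bird} ($A_H \subseteq \bigcup \mathcal A$) attaches each $H$-edge $uv$ with $u \in A_H$ uniquely to its pair $(A(u), D(uv)) \in T$, so $\sum_{(A,D) \in T} e_H(A \cap A_H,\, W^*_D(A) \cap B_H) = e(H) \ge \tau kn$. Discarding pairs with $e_H < (\tau\rho/(2\Omega))|X'||Y'|$ costs at most $(\tau\rho/(2\Omega)) \cdot n\Omega k/\rho = \tau kn/2$ edges, and the remaining $\ge \tau kn/2$ edges, averaged over at most $n\Omega/(\nu k\rho)$ surviving pairs, pin down a single $(A,D)$ with $e_H(X',Y')/(|X'||Y'|) \ge \tau\rho/(2\Omega)$ \emph{and} $e_H(X',Y') \ge \tau\rho\nu k^2/(2\Omega)$, where $X' := A \cap A_H$ and $Y' := W^*_D(A) \cap B_H$. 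The max-degree bound $\maxdeg(G) \le \Omega k$ then gives $|X'|,|Y'| \ge e_H/(\Omega k) \ge c_0\nu k$.

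To finish, I would trim the larger of $X', Y'$ by a one-line random-subset argument so that $|X'| = |Y'| =: m \ge c_0\nu k$ with the density $\ge \tau\rho/(2\Omega)$ preserved, and apply Lemma~\ref{lem:RL} to $G[X' \cup Y']$ with pre-partition $\{X', Y'\}$, $\ell = 2$, and the above $\epsilon_{\mathrm{RL}}$. Routine bounds on the three bad contributions---edges touching $U_0$ ($\le 4m^2\epsilon_{\mathrm{RL}}$), irregular cross pairs ($\le 4m^2\epsilon_{\mathrm{RL}}$ via $pc \le 2m$), and cross regular pairs of density below $\tau\rho/(4\Omega)$ ($\le (\tau\rho/(4\Omega))m^2$)---keep their total below $(\tau\rho/(2\Omega))m^2 \le e(X', Y')$, exhibiting a cross $\epsilon_{\mathrm{RL}}$-regular pair $(X,Y)$ of $G$-density $\ge \tau\rho/(4\Omega)$ whose common cluster size satisfies $c \ge c_0\nu k/M = \alpha\nu k$. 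The real obstacle is Step~3: I need a single $(A,D)$ that is \emph{simultaneously} dense \emph{and} carries $\Theta(\nu k^2)$ $H$-edges, so that $|X'|$ and $|Y'|$ are linear rather than quadratic in $\nu$; this is what forces the two-stage averaging (density filter first, then picking the max-$e_H$ pair inside the filter), and the subsequent rebalancing of $X',Y'$ is a routine patch for the potentially huge ratio $|W|/|A|$.
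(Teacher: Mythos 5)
Your proposal is correct and takes essentially the same route as the paper: average over $\mathcal A$ and $\DenseSpots$ to isolate one pair $(A,D)$ for which the $H$-slab $(A\cap A_H,\,W\cap B_H)$ is simultaneously dense and of size linear in $\nu k$ on both sides, then extract a regular pair via Lemma~\ref{lem:RL}. The only cosmetic differences are the order of averaging (you run a single joint average over $T$ after a density filter, while the paper first averages over $\DenseSpots$ weighted by $|F_D|$ and then over $\mathcal A$ within the chosen spot) and your balancing of $|X'|=|Y'|$, which the paper skips since the cluster size produced by Lemma~\ref{lem:RL} is already bounded below by essentially $(|X'|+|Y'|)/M\ge\alpha\nu k$ regardless of balance.
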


\begin{proof}
Let $\Omega$, $\epsilon$, $\rho$ and $\tau$ be given. Applying
Lemma~\ref{lem:RL} to
$\epsilon_\PARAMETERPASSING{L}{lem:RL}:=\min\{\epsilon
,\frac{\rho^2}{8\Omega}\}$
and $\ell_\PARAMETERPASSING{L}{lem:RL}:=2$, we obtain numbers $n_0$ and $M$.
We set 
\begin{equation}\label{allllpha}
\alpha:=\frac{\tau\rho}{\Omega^2M},
\end{equation}
 and given $\nu\in(0,1)$, we set
$$k_0:=\frac{2n_0}{\alpha\nu M}.$$
Now suppose we are given  $k>k_0$ and $(G,\DenseSpots,H,\mathcal A)\in\mathcal G(n,k,\Omega,\rho,\nu,\tau)$.

Property~\eqref{maxroach} of Definition~\ref{tupelclass} gives that $e(G)\le \Omega
k n/2$, and Property~\eqref{duke} says  that $e(H)\ge
\tau kn$. So $e(H)/e(G)\geq 2\tau/ \Omega$. Averaging over all dense spots $\DenseSpots$ in the dense cover of $G$ we find a dense spot $D=(U,W;F)\in\DenseSpots$ such that
\begin{align}\label{eq:proporcialniDensity}
e_{D}(A_H,B_H)= |F\cap E(H)|\geq \frac{e(H)}{e(G)}|F|\ge
\frac{2\tau |F|}{\Omega}\;.
 \end{align}

Without loss of generality, we assume that
\begin{equation}\label{eq:WLOGUW}
e_{D}(U\cap A_H,W\cap B_H)\geq \frac12\cdot e_{D}(A_H,B_H) \ge e_{D}(U\cap B_H,W\cap A_H)\;,
\end{equation}
as otherwise one can just interchange the roles of $U$ and $W$.
Then,
\begin{align}
e_G(U\cap A_H,W\cap B_H)&\geBy{\eqref{eq:WLOGUW}} \frac12\cdot e_{D}(A_H,B_H) \geBy{\eqref{eq:proporcialniDensity}}\frac{\tau}{\Omega}\cdot |F|.\label{colchon}
\end{align}

Let $\mathcal A'\subset\mathcal A$ denote the family of
those $A\in\mathcal A$ with
$0< e_G(A\cap U\cap
A_H,W\cap B_H)< \frac{\tau}{\Omega}\cdot |F|\cdot \frac{|A|}{|U|}$. 
Note that for each $A\in\A'$ we have $A\subset U$ by Definition~\ref{tupelclass}~\eqref{mingus}. Therefore,
\begin{equation*}
e_G\left(\bigcup \mathcal A'\cap U\cap A_H,W\cap
B_H\right)
< \ \frac \tau\Omega \cdot|F|\cdot \frac{|\A'|}{|U|}
\leq \ \frac \tau\Omega \cdot|F|
\overset{\eqref{colchon}}\leq \ e_G(U\cap A_H,W\cap B_H)\;.
\end{equation*}

As $\mathcal A$ covers $A_H$, $G$ has an edge $xy$ with $x\in U\cap A_H\cap A$ for some $A\in\mathcal A\setminus \mathcal A'$ and $y\in W\cap
B_H$.  Set $X':=A\cap U\cap A_H=A\cap A_H$ and $Y':=W\cap B_H$. Then directly
from the definition of $\mathcal A'$ and since $D$ is a $(\rho k,\rho)$-dense spot, 
we obtain that
\begin{equation}
\label{eq:denX'Y'}
\density_G(X',Y')= \frac{e_G(X',Y')}{|X'||Y'|}  \ge
\frac{\frac{\tau}{\Omega}\cdot |F|\cdot \frac{|A|}{|U|}}{|A||W|}  >
\ \frac{\tau\rho}{\Omega}.
\end{equation}

Also,
since $(U,W;F)\in\DenseSpots$, we have
\begin{equation}\label{eq:denseIndMany}
|F|\ge \rho k |U|\;.
\end{equation}
This enables us to bound the size of $X'$ as follows.
\begin{align}
\begin{split}
\label{eq:sizPreX'}
|X'| &\geq
\frac{e_G(X',Y')}{\maxdeg{(G)}}
 \\[6pt]
\JUSTIFY{as $A\not\in\mathcal A'$ and by D\ref{tupelclass}\eqref{maxroach}}&  \ge
 \frac{\frac{\tau}{\Omega}\cdot \frac{|F|}{|U|}\cdot |A|}{\Omega k} \\[6pt]
\JUSTIFY{by \eqref{eq:denseIndMany}}& 
\ge \frac{\tau\cdot \rho k\cdot |A|}{\Omega^2 k} \\
&\ge \frac{\tau\rho \nu k}{\Omega^2 }\\
&\eqByRef{allllpha} \alpha \nu kM\;.
\end{split}
\end{align}

Similarly,
\begin{align}
\label{eq:sizPreY'}
|Y'|
 \geq \ \alpha\nu kM\;.
\end{align}

Applying Lemma~\ref{lem:RL} to $G[X',Y']$ with prepartition
$\{X',Y'\}$ we obtain a collection of sets $\mathcal
C=\{C_i\}_{i=0}^p$, with $p<M$. By~\eqref{eq:sizPreX'},
and~\eqref{eq:sizPreY'}, we have that $|C_i|\ge \alpha\nu k$
for every $i\in[p]$. It is easy to
deduce from~\eqref{eq:denX'Y'} that
there is at least one $\epsilon_\PARAMETERPASSING{L}{lem:RL}$-regular (and thus $\epsilon$-regular) pair $(X,Y)$, $X,Y\in\mathcal
C\setminus\{C_0\}$, $X\subset X'$, $Y\subset Y'$ with
$\density(X,Y)\ge \frac{\tau\rho}{4\Omega}$. Indeed, it suffices to count the number of edges incident with $C_0$, lying in $\epsilon_{\mathrm L\ref{lem:RL}}$-irregular pairs or
belonging to too sparse pairs. The number of these ``bad'' edges is  strictly smaller than
$$(\epsilon_\PARAMETERPASSING{L}{lem:RL} +
\epsilon_\PARAMETERPASSING{L}{lem:RL} + \frac{\rho^2}{4\Omega}
)|X'||Y'| \leq\frac{\rho^2}{2\Omega} |X'||Y'|
\overset{\eqref{eq:denX'Y'}}\leq e(X',Y').$$ 
Thus not
all edges between $X'$ and $Y'$ are bad in the sense above. This finishes the
proof of Lemma~\ref{lem:edgesEmanatingFromDensePairsII}.
\end{proof}

Instead of just one pair $(X,Y)$, as it is given by 
Lemma~\ref{lem:edgesEmanatingFromDensePairsII}, we shall later need several disjoint pairs for embedding larger trees. For this purpose we introduce the following definition, generalizing the notion of a matching in the cluster graph in the traditional regularity setting.

\begin{definition}[\bf $(\epsilon,d,\ell)$-\semiregular matching]\label{def:semiregular} 
Suppose that $\ell\in\NN$ and  $d,\epsilon>0$.
A collection~$\mathcal N$ of ordered pairs $(A,B)$ with $A,B\subset V(H)$ is called an
\index{general}{regularized@\semiregular matching}\emph{$(\epsilon,d,\ell)$-\semiregular matching} of a
graph~$H$ if \begin{enumerate}[(i)] 
\item\label{def:semi1} $|A|=|B|\ge \ell$ for each
$(A,B)\in\mathcal N$, \item $(A,B)$ induces in $H$ an $\epsilon$-regular pair of
density at least $d$, for each $(A,B)\in\mathcal N$, and \item all involved sets
$A$ and $B$ are  pairwise disjoint.
\end{enumerate}
% If additionally $|A|=|A'|$ for all $(A,B),
% (A',B')\in\mathcal N$, then we call
% $\mathcal N$ an \index{general}{regular
% matching}\emph{$(\epsilon,d,\ell)$-regular matching}.
Sometimes, when the parameters do not matter (as for instance in Definition~\ref{altPath} below) we simply call it a \emph{\semiregular matching}.
\end{definition}

For a \semiregular matching $\mathcal N$, we shall write 
\index{mathsymbols}{*V1@$\V_1(\M)$, $\V_2(\M)$, $\V(\M)$}$\V_1(\mathcal
N):=\{A\::\:(A,B)\in\mathcal N\}$, $\V_2(\mathcal N):=\{B\::\:(A,B)\in\mathcal N\}$
and $\V(\mathcal N):=\mathcal V_1(\mathcal N)\cup \mathcal V_2(\mathcal N)$. 
Furthermore, we set 
\index{mathsymbols}{*V1@$V_1(\mathcal M)$,
$V_2(\mathcal M)$, $V(\mathcal M)$} 
$V_1(\mathcal N):=\bigcup \mathcal V_1(\mathcal N)$,
$V_2(\mathcal N):=\bigcup \mathcal V_2(\mathcal N)$ and $V(\mathcal N):=V_1(\mathcal
N)\cup V_2(\mathcal N)= \bigcup \mathcal V(\mathcal N)$. 
 As these definitions suggest, the orientations of
the pairs $(A,B)\in\mathcal N$ are important. The sets $A$ and $B$ are called \index{mathsymbols}{*VERTEX@$\M$-vertex}\index{general}{vertex@$\M$-vertex}\emph{$\mathcal N$-vertices} and the pair $(A,B)$ is an \index{mathsymbols}{*EDGE@$\M$-edge}\index{general}{edge@$\M$-edge}\emph{$\mathcal N$-edge}.

We say that a \semiregular matching $\mathcal N$
\index{general}{absorb}\emph{absorbs} a \semiregular matching $\mathcal M$ if for every $(S,T)\in\mathcal M$ there exists $(X,Y)\in\mathcal N$ such that $S\subset X$
and $T\subset Y$. In the same way, we say that a family of dense spots $\mathcal D$
\index{general}{absorb}\emph{absorbs} a \semiregular matching $\mathcal M$ if for every $(S,T)\in\mathcal M$ there exists $(U,W;F)\in\mathcal D$ such that $S\subset U$
and $T\subset W$.

We  later need
the following easy bound on the size of the elements  of
$\mathcal V(\mathcal M)$.

\begin{fact}\label{fact:boundMatchingClusters}
Suppose that $\mathcal M$ is an
$(\epsilon,d,\ell)$-\semiregular
matching in a graph $H$. Then $|C|\le
\frac{\maxdeg{(H)}}{d}$ for each $C\in
\mathcal V(\mathcal M)$.
\end{fact}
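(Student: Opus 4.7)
The plan is to extract the bound directly from the density lower bound on the regular pair in which $C$ sits. Since $C \in \mathcal V(\mathcal M)$, there is some $C' \subset V(H)$ such that either $(C,C')$ or $(C',C)$ is an $\mathcal M$-edge; in either case, by Definition~\ref{def:semiregular}, we have $|C'|=|C|$ and $\density_H(C,C') \ge d$.

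From the density lower bound,
\[
e_H(C,C') \;\ge\; d\,|C|\,|C'| \;=\; d\,|C|^{2}.
\]
On the other hand, counting edges between $C$ and $C'$ from the $C$-side,
\[
e_H(C,C') \;=\; \sum_{v \in C} \deg_H(v,C') \;\le\; |C|\cdot \maxdeg(H).
\]
Combining the two inequalities and dividing by $d\,|C|$ (which is positive since $|C|\ge \ell \ge 1$) yields $|C|\le \maxdeg(H)/d$, as required.

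There is no real obstacle here; the statement is a one-line consequence of the definition. The only thing to be slightly careful about is that one uses $|C|=|C'|$ (from Definition~\ref{def:semiregular}\eqref{def:semi1}) to turn the product $|C||C'|$ into $|C|^2$ before cancelling, so that the resulting bound does not involve $|C'|$.
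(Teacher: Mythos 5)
Your proof is correct and is essentially the paper's argument: both double-count $e_H(C,C')$ using the density lower bound $d|C||C'|$ and the maximum-degree upper bound. The only cosmetic difference is that the paper counts from the $C'$-side (average degree of vertices in $C'$ is at least $d|C|$, hence $\maxdeg(H)\ge d|C|$), which avoids needing $|C|=|C'|$, whereas you count from the $C$-side and then invoke $|C|=|C'|$; both are fine.
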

\begin{proof}
Let for example $(C,D)\in\mathcal M$. The maximum degree of $H$ is at least as large as the average
degree of the vertices in $D$, which is at least
$d |C|$.
\end{proof}

The  second step towards Lemma~\ref{lem:Separate} is 
Lemma~\ref{lem:edgesEmanatingFromDensePairsIII}. Whereas
Lemma~\ref{lem:edgesEmanatingFromDensePairsII} gives
one dense regular pair, in the same setting 
Lemma~\ref{lem:edgesEmanatingFromDensePairsIII} provides us with
a dense \semiregular
matching. 
%We remark that this result will be used both here and in~\cite{cite:LKS-cut2}.

\begin{lemma}\label{lem:edgesEmanatingFromDensePairsIII}
For every $\Omega\in\NN$ and $\rho,\epsilon,\tau \in(0,1)$ there exists
$\alpha>0$ such that for every $\nu\in (0,1)$ there is a number $k_0\in\NN$
such that the following holds for every $k>k_0$. 

 For each $(G,\DenseSpots,H,\mathcal A)\in\mathcal G(n,k,\Omega,\rho,\nu,\tau)$ there exists an
$(\epsilon ,\frac{\tau\rho}{8\Omega},\alpha\nu k)$-\semiregular matching $\mathcal M$ of $G$ such that
\begin{enumerate}
  \item[$\mathbf{(P1)}$] for each $(X,Y)\in\mathcal M$ there are $A\in\mathcal A$, and
  $D=(U,W;F)\in \DenseSpots$ such that  $X\subset U\cap A\cap A_H$ and $Y\subset
  W\cap B_H$,
  and
  \item[$\mathbf{(P2)}$] $|V(\mathcal M)|\ge\frac{\tau}{2\Omega} n$.
\end{enumerate}
\end{lemma}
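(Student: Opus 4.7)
The plan is to build $\mathcal M$ greedily by iterating Lemma~\ref{lem:edgesEmanatingFromDensePairsII}, peeling off one regular pair at a time. I set the constant $\alpha$ as the value of $\alpha_\PARAMETERPASSING{L}{lem:edgesEmanatingFromDensePairsII}$ obtained by running Lemma~\ref{lem:edgesEmanatingFromDensePairsII} with parameters $\Omega$, $\epsilon$, $\rho$ and $\tau/2$ (in place of $\tau$), and choose $k_0$ accordingly. Starting with $\mathcal M_0:=\emptyset$ and $H_0:=H$, at step $i\ge 1$ I stop if $e(H_{i-1})<\tfrac{\tau}{2}kn$; otherwise I apply Lemma~\ref{lem:edgesEmanatingFromDensePairsII} to $(G,\DenseSpots,H_{i-1},\mathcal A)$ (with $\tau/2$) to obtain a pair $(X_i,Y_i)$ of size $|X_i|=|Y_i|\ge\alpha\nu k$, together with some $A_i\in\mathcal A$ and $D_i=(U_i,W_i;F_i)\in\DenseSpots$ witnessing $X_i\subset A_i\cap U_i\cap A_H$ and $Y_i\subset W_i\cap B_H$, and such that $(X_i,Y_i)$ is $\epsilon$-regular of density at least $\tfrac{(\tau/2)\rho}{4\Omega}=\tfrac{\tau\rho}{8\Omega}$ in $G$. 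I then add $(X_i,Y_i)$ to $\mathcal M$ and set $H_i:=H_{i-1}-(X_i\cup Y_i)$.

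The key observation making the iteration legal is that only $H$ changes, while $G$, $\DenseSpots$ and $\mathcal A$ are left untouched. Thus conditions \eqref{maxroach}, (iii) and \eqref{mingus} of Definition~\ref{tupelclass} continue to hold, the color classes of $H_{i-1}$ stay inside the original $A_H\cup B_H\subseteq\bigcup\mathcal A$ so that \eqref{bird} is preserved, and \eqref{duke} is precisely the stopping condition $e(H_{i-1})\ge\tfrac{\tau}{2}kn$. Hence $(G,\DenseSpots,H_{i-1},\mathcal A)\in\mathcal G(n,k,\Omega,\rho,\nu,\tau/2)$ whenever the loop continues, which legitimizes the application of Lemma~\ref{lem:edgesEmanatingFromDensePairsII}. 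The iteration terminates in at most $n/(2\alpha\nu k)$ steps since $|V(\mathcal M)|$ strictly increases by at least $2\alpha\nu k$ per step.

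To verify (P2), note that deleting a single vertex of $H_{i-1}$ costs at most $\maxdeg(G)\le\Omega k$ edges by property~\eqref{maxroach}. Hence $e(H_0)-e(H_t)\le\Omega k\cdot|V(\mathcal M)|$, and the stopping criterion gives $e(H_0)-e(H_t)>\tau kn-\tfrac{\tau}{2}kn=\tfrac{\tau}{2}kn$, so $|V(\mathcal M)|>\tfrac{\tau}{2\Omega}n$, as required. Property~(P1) is immediate from the choice of $A_i$ and $D_i$ furnished by Lemma~\ref{lem:edgesEmanatingFromDensePairsII}, and disjointness of the pairs in $\mathcal M$ is built into the construction since $X_i\cup Y_i\subseteq V(H_{i-1})$ excludes all previously chosen vertices. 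The remaining conditions of Definition~\ref{def:semiregular} (size, regularity, density) are inherited directly from the guarantees of Lemma~\ref{lem:edgesEmanatingFromDensePairsII}. The proof is essentially bookkeeping; the only delicate point is balancing the budget so that the edge loss bound in the stopping criterion produces exactly the constant $\tfrac{\tau}{2\Omega}$ demanded by (P2), which is why I apply the sub-lemma with parameter $\tau/2$ rather than $\tau$.
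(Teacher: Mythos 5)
Your proof is correct and takes essentially the same approach as the paper: the paper chooses an inclusion-maximal regularized matching with property $\mathbf{(P1)}$ and argues by contradiction that the uncovered part of $H$ has fewer than $\tfrac{\tau}{2}kn$ edges (otherwise Lemma~\ref{lem:edgesEmanatingFromDensePairsII} with $\tau/2$ would extend the matching), then deduces $\mathbf{(P2)}$ by the same $\maxdeg(G)\le\Omega k$ counting you use; your greedy iteration is just the unfolded version of that maximality argument, and the budget $\tau/2$ is chosen for the same reason. One cosmetic slip: you write ``$A_H\cup B_H\subseteq\bigcup\mathcal A$'', but Definition~\ref{tupelclass}\eqref{bird} only guarantees $A_H\subseteq\bigcup\mathcal A$; fortunately that is all the iterated application needs, since $A_{H_{i-1}}\subseteq A_H$.
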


\begin{proof}
Let
$\alpha:=\alpha_\PARAMETERPASSING{L}{lem:edgesEmanatingFromDensePairsII}>0$ be given by Lemma~\ref{lem:edgesEmanatingFromDensePairsII}
  for the  input parameters
  $\Omega_\PARAMETERPASSING{L}{lem:edgesEmanatingFromDensePairsII}:=\Omega$,
  $\epsilon_\PARAMETERPASSING{L}{lem:edgesEmanatingFromDensePairsII}:=\epsilon$,
  $\tau_\PARAMETERPASSING{L}{lem:edgesEmanatingFromDensePairsII}:=\tau/2$
  and $\rho_\PARAMETERPASSING{L}{lem:edgesEmanatingFromDensePairsII}:=\rho$.
  For $\nu_\PARAMETERPASSING{L}{lem:edgesEmanatingFromDensePairsII}:=\nu$,
  Lemma~\ref{lem:edgesEmanatingFromDensePairsII}  yields a number $k_0\in\NN$.

Now let  $(G,\DenseSpots,H,\mathcal A)\in\mathcal G(n,k,\Omega,\rho,\nu,\tau)$. Let $\mathcal M$ be an inclusion-maximal
$(\epsilon,\frac{\tau\rho}{8\Omega},\alpha\nu k)$-\semiregular matching with property~$\mathbf{(P1)}$. 
We claim that
\begin{equation}\label{bedingung2}
  e_G(A_H\setminus V_1(\mathcal M),B_H\setminus V_2(\mathcal M))<\frac\tau 2 kn.
\end{equation}
Indeed, suppose the contrary. Then 
the bipartite subgraph $H'$ of $G$ induced by the sets $A_H\setminus
V_1(\mathcal M)=A_H\setminus V(\mathcal M)$ and $B_H\setminus V_2(\mathcal M)=B_H\setminus V(\mathcal M)$
satisfies Property~\eqref{duke} of Definition~\ref{tupelclass}, with
$\tau_\PARAMETERPASSING{D}{tupelclass}:=\tau/2$. So, we have that $(G, \DenseSpots, H', \mathcal A)\in \mathcal G(n,k,\Omega,\rho,\nu,\tau/2)$.

Thus
 Lemma~\ref{lem:edgesEmanatingFromDensePairsII} for $(G, \DenseSpots, H',
 \mathcal A)$ yields a dense spot $D=(U,W;F)\in\DenseSpots$ and a set
 $A\in\mathcal A$, together with two sets $X\subset U\cap A\cap (A_H\setminus 
 V(\mathcal M))$, $Y\subset W\cap (B_H\setminus  V(\mathcal M))$ such that
 $|X|=|Y|>\alpha_\PARAMETERPASSING{L}{lem:edgesEmanatingFromDensePairsII}\nu
 k=\alpha\nu k$, and such that $(X,Y)$ is $\epsilon_\PARAMETERPASSING{L}{lem:edgesEmanatingFromDensePairsII}$-regular and has density at least $$\frac{\tau_\PARAMETERPASSING{L}{lem:edgesEmanatingFromDensePairsII}\rho_\PARAMETERPASSING{L}{lem:edgesEmanatingFromDensePairsII}}{4\Omega_\PARAMETERPASSING{L}{lem:edgesEmanatingFromDensePairsII}}=\frac{\tau\rho}{8\Omega}.$$ This contradicts the maximality of $\mathcal M$, proving~\eqref{bedingung2}.

In order to see~$\mathbf{(P2)}$, it suffices to observe that by~\eqref{bedingung2} and by Property~\eqref{duke} of Definition~\ref{tupelclass}, the set $V(\mathcal M)$ is incident with at least $\tau  kn-\frac\tau 2 kn=\frac\tau 2 kn$ edges. By Definition~\ref{tupelclass}~\eqref{maxroach}, it follows that $|V(\mathcal M)|\geq \frac\tau 2 kn\cdot \frac 1{\Omega k}\geq \frac{\tau}{2\Omega}n$, as desired.
\end{proof}

\subsection{Augmenting paths for matchings}

We now prove the main lemma of Section~\ref{sec:augmenting}, namely Lemma~\ref{lem:Separate}. We will use an augmenting path technique for our \semiregular matchings, similar to the augmenting paths commonly used for traditional matching theorems. For this, we need the following definitions.

\begin{definition}[\bf Alternating path, augmenting
path\index{general}{alternating path}\index{general}{augmenting path}]\label{altPath} 
Suppose that $n,s\in\NN$ and $\delta>0$. 
Given an $n$-vertex graph $G$,
and a \semiregular matching $\M$, we  call a sequence $\mathcal S=(Y_0,\A_1,Y_1, \A_2, Y_2,\ldots ,\A_h ,Y_h)$ (where $h\ge 0$ is arbitrary) a {\em $(\delta,s)$-alternating path for $\M$ from $Y_0$} if for all $i\in[h]$ we have 
 \begin{enumerate}[(i)]
\item $\A_i\subseteq \mathcal V_1(\mathcal M)$ and the sets $\mathcal A_i$ are pairwise disjoint, \label{alt1e}
\item  $Y_0\subseteq V(G) \setminus V(\mathcal M)$ and 
$Y_i=\bigcup_{(A,B)\in\M, A\in\A_i}B$,\label{alte2}
\item  $|Y_{i-1}|\geq\delta n$, and\label{alte3}%this is ok because we want this from 0 to h-1
\item $e(A, Y_{i-1})\geq s\cdot |A|$,  for each $A\in\mathcal A_i$.\label{alte4}
\suspend{enumerate}
If in addition there is a set $\mathcal C$ of disjoint subsets of
$V(G)\setminus (Y_0\cup V(\M))$ such that 
\resume{enumerate}[{[(i)]}]
\item  $e(\bigcup \mathcal C,
Y_h)\geq t\cdot n$,\label{augm5}
\end{enumerate}
then we say that $\mathcal S'=(Y_0,\A_1,Y_1, \A_2, Y_2,\ldots ,\A_h ,Y_h,\mathcal C)$ is a \emph{$(\delta,s,t)$-augmenting path for $\M$ from~$Y_0$ to $\mathcal C$}.

The number $h$ is called the \index{general}{length of alternating path} \emph{length} of $\mathcal{S}$ (or of $\mathcal{S'}$).
\end{definition}

Next, we show that a \semiregular matching either has an augmenting path or admits a partition into two parts so that only few edges cross these parts in a certain way.

\begin{lemma}\label{lem:augmentingPath}
 Given an $n$-vertex graph $G$ with $\maxdeg(G)\le \Omega
k$, a number $\tau\in (0,1)$, a \semiregular matching $\M$, a set $Y_0\subseteq V(G)\setminus V(\M)$, and  a set $\C$ of disjoint subsets of $V(G)\setminus (V(\M)\cup Y_0)$, one of the following holds:
  \begin{itemize}
  \item[{\bf(M1)}] 
    There is a \semiregular matching $\mathcal
  M''\subseteq \mathcal M$ with $$e\left(\bigcup \mathcal C\cup
  V_1(\mathcal M\setminus\mathcal M''),Y_0\cup  V_2(\mathcal M'')\right)<\tau nk,$$ 
    \item[{\bf(M2)}] 
$\M$ has a $(\frac\tau{2\Omega}, \frac{\tau^2}{8\Omega}k,\frac{\tau^2}{16\Omega}
k)$-augmenting path of length at most $2\Omega/\tau$ from $Y_0$ to $\mathcal C$.
    \end{itemize}
\end{lemma}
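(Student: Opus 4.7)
My strategy is a standard greedy construction of an alternating path starting from $Y_0$, which I would run until it either connects (in the sense of clause (v) of Definition~\ref{altPath}) to $\bigcup \C$—producing an augmenting path for (M2)—or until the current frontier shrinks below a fixed density threshold, in which case the $\M$-edges never reached supply the sub-matching $\M''$ needed for (M1). Concretely, set $\delta := \tau/(2\Omega)$, $s := \tau^2 k/(8\Omega)$, and $t := \tau^2 k/(16\Omega)$, initialise $h := 0$, and iterate: (a) if $e(\bigcup \C, Y_h) \ge tn$, stop and declare (M2); (b) otherwise, if $|Y_h| < \delta n$, stop and declare (M1); (c) otherwise let $\A_{h+1}$ be the family of all $A \in \V_1(\M)\setminus(\A_1\cup\cdots\cup\A_h)$ with $e(A, Y_h) \ge s|A|$, put $Y_{h+1} := \bigcup\{B : (A,B)\in \M,\, A\in\A_{h+1}\}$, and increment $h$.

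\textbf{Termination, length, and case (M2).} Because $Y_0 \subseteq V(G) \setminus V(\M)$ while $Y_i \subseteq V(\M)$ for $i \ge 1$, and the $\A_i$'s are pairwise disjoint, the sets $Y_0,Y_1,\dots,Y_{h-1}$ are pairwise disjoint. Each of them has size at least $\delta n$ (otherwise the check in (b) would have stopped the loop earlier), so $h \le 1/\delta = 2\Omega/\tau$. If we stop via (a), the sequence $(Y_0,\A_1,Y_1,\dots,\A_h,Y_h,\C)$ fulfils clauses (i)--(v) of Definition~\ref{altPath} with parameters $(\delta,s,t)$: clause (iv) is precisely the admission rule for $\A_i$, clause (iii) was verified before each extension, and clause (v) is exactly the stopping condition. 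This gives (M2).

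\textbf{Case (M1).} If instead we stop via (b), define $\M'' := \{(A,B) \in \M : A \in \A_1 \cup \cdots \cup \A_h\}$, so that $V_2(\M'') = Y_1 \cup \cdots \cup Y_h$ and $V_1(\M\setminus \M'')$ consists exactly of those $\M$-clusters never added to any $\A_i$. For each such cluster $A$ and each $i \in [h]$, the failure of $A$ to enter $\A_i$ forces $e(A, Y_{i-1}) < s|A|$. Since the $Y_i$ are pairwise disjoint, I would split
\[
e\!\left(\bigcup\C \cup V_1(\M\setminus\M''),\, Y_0 \cup V_2(\M'')\right)
\;\le\; \sum_{i=0}^h e\!\left(\bigcup\C, Y_i\right) \;+\; \sum_{i=0}^{h-1} e\!\left(V_1(\M\setminus\M''), Y_i\right) \;+\; e\!\left(V_1(\M\setminus\M''), Y_h\right).
\]
The first sum is at most $(h+1)tn$ (no iteration triggered (a)); the second is at most $hs \cdot |V_1(\M\setminus\M'')| \le hsn$ (by the cluster-wise bound above); the third is at most $|Y_h| \cdot \maxdeg(G) < \delta n \cdot \Omega k$. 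Plugging in $h \le 2\Omega/\tau$ and the chosen $\delta, s, t$, the three pieces contribute at most roughly $\tau kn/8$, $\tau kn/4$, and $\tau kn/2$, respectively, summing to strictly less than $\tau kn$.

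\textbf{Main obstacle.} There is no conceptual obstacle—this is a classical augmenting-path walk adapted to a regularized matching. The delicate point is purely arithmetic: choosing $\delta,s,t$ in the correct ratio so that the three contributions in the (M1) estimate jointly fit strictly under $\tau kn$, while simultaneously keeping $s$ and $t$ of order $k$ so that the conclusion of (M2) is non-trivial. A secondary bookkeeping point is that the length bound should use disjointness of $Y_0,\dots,Y_{h-1}$ only, and one must \emph{not} require $|Y_h| \ge \delta n$ at the moment of declaring (M2), since clause (iii) of Definition~\ref{altPath} only constrains $|Y_{i-1}|$ for $i \in [h]$.
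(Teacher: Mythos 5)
Your proof is correct and follows essentially the same approach as the paper: walk out from $Y_0$ along an alternating path, with the same thresholds $\delta=\tau/(2\Omega)$, $s=\tau^2k/(8\Omega)$, $t=\tau^2k/(16\Omega)$, and terminate either when the frontier connects to $\bigcup\C$ (giving (M2)) or when it shrinks below $\delta n$ (giving (M1) with $\M''$ the collection of $\M$-edges reached). The only cosmetic difference is that you build the alternating path greedily and stop as soon as $e(\bigcup\C,Y_h)\ge tn$, whereas the paper fixes a single alternating path maximizing $|\bigcup_\ell\A_\ell|$ and then locates a suitable index $j\le\ell^*$ a posteriori; your cluster-wise bound $e(A,Y_{i-1})<s|A|$ for $A\in V_1(\M\setminus\M'')$ plays exactly the role of the paper's maximality argument, and the arithmetic (summing to $\tfrac{15}{16}\tau kn<\tau kn$) checks out.
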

\begin{proof}
If $|Y_0|\le \frac{\tau}{2\Omega} n$ then {\bf(M1)} is satisfied for $\M'':=\emptyset$. Let us therefore assume the contrary.
 
Choose a $(\frac\tau{2\Omega},\frac{\tau^2}{8\Omega} k)$-alternating path  $\mathcal
S=(Y_0,\A_1,Y_1, \A_2,$ $Y_2,\ldots,\A_h, Y_h)$ for $\mathcal M$
 with $|\bigcup_{\ell=1}^h\mathcal A_\ell|$ maximal.

 Now, let $\ell^*\in \{0,1,\ldots,h\}$ be maximal with $|Y_{\ell^*}|\geq \frac\tau{2\Omega} n$. Then $\ell^*\in\{h,h-1\}$. Moreover,
 as $|Y_\ell|\geq\frac\tau{2\Omega} n$ for all $\ell\leq\ell^*$, we have that $(\ell^*+1)\cdot \frac\tau{2\Omega} n\leq |\bigcup_{\ell\leq\ell^*} Y_\ell| \leq n$ and thus
\begin{equation}\label{ellstarbounded}
 \ell^*+1\leq \frac{2\Omega}\tau.
\end{equation}

Let $\mathcal M''\subseteq \mathcal M$ consist of all $\M$-edges $(A,B)\in \M$ with  $A\in\bigcup_{\ell\in[h]}\A_\ell$. Then, by the choice of~$\mathcal S$,
\begin{align} \nonumber
 e\left(V_1(\M\setminus \M''), \bigcup_{\ell=0}^{\ell^*}Y_{\ell}\right)&=\sum_{\ell=0}^{\ell^*}e\left(V_1(\M\setminus \M''), Y_{\ell}\right)\\ \label{endofthepath}
&  < (\ell^*+1)\cdot \frac{\tau^2}{8\Omega} k \cdot |V_1(\M\setminus \M'')|  \leByRef{ellstarbounded} \frac\tau 4 kn.
\end{align}
Furthermore, if $\ell^*=h -1$ (that is, if $|Y_{h}|<\frac\tau{2\Omega} n$) then  
\begin{equation}\label{deltanisnothing}
e\left(V_1(\M\setminus \M'')\cup\bigcup \mathcal C,Y_h\right)\ <\  \frac\tau{2\Omega} n\cdot\maxdeg(G)\ \leq \ \frac\tau{2\Omega}\Omega kn \ =\ \frac\tau 2kn.
\end{equation}

So, regardless of whether $h=\ell^*$ or $h=\ell^*+1$, we get from~\eqref{endofthepath} and~\eqref{deltanisnothing} that
\[
e\left(V_1(\M\setminus \M'')\cup\bigcup \mathcal C, Y_0\cup V_2(\M'')\right)<
\frac 34\tau kn + e\left(\bigcup \mathcal C, \bigcup_{\ell=0}^{\ell^*}Y_{\ell}\right).
\]

Thus, if $e(\bigcup \mathcal C,\bigcup_{\ell=0}^{\ell^*}Y_\ell)\leq \frac\tau 4kn$,  we see that~$\mathbf{(M1)}$ is satisfied for $\M''$. So, assume the contrary. 
Then, by~\eqref{ellstarbounded},
there is an index $j\in\{0,1,\ldots,\ell^*\}$ for which
\begin{equation*}\label{CYedges}
 e\left(\bigcup \mathcal C,Y_j\right)
 \ > \ \frac{\tau^2}{16\Omega}kn,
\end{equation*}
and thus, $(Y_0,\A_1,Y_1, \A_2,$ $Y_2,\ldots,\A_j, Y_j,\C)$ is a $(\frac\tau{2\Omega}, \frac{\tau^2}{8\Omega}
k, \frac{\tau^2}{16\Omega} k)$-augmenting path for $\mathcal M$.
This proves~{\bf(M2)}.
\end{proof}

The aim of this section is to find a \semiregular matching covering as many vertices from the graph as possible. This is done by iteratively improving a matching. Below, Lemma~\ref{lem:AugmentORSeparate} provides with such an iterative step: given a \semiregular matching $\M$ we either find~\eqref{it:AugmAss2} a better \semiregular matching $\M'$, or there is~\eqref{it:AugmAss1} a natural barrier to finding such a matching. This barrier is a separation of the previous \semiregular matching into two blocks ($\M''$ and $\M\setminus \M''$) such that very few edges ``cross'' this separation. The absence of such a separation guarantees the existence of an augmenting path for $\M$, which can be used to find  a better \semiregular matching. This matching $\M'$  has~{\bf(C1)} to improve $\M$ substantially and~{\bf(C2)} respect the structure of the graph and of $\M$.

\begin{lemma}\label{lem:AugmentORSeparate}
For every
$\Omega\in \NN$ and  $\tau\in (0,\frac{1}{2\Omega})$ there is a number $\tau'\in (0,\tau)$ such that for every $\rho\in(0,1)$ there is a number
$\alpha\in (0,\tau'/2)$ such that for every $\epsilon\in (0,\alpha)$ there is a number
$\pi>0$ 
 such that for every $\gamma>0$
there is $k_0\in\NN$ such
that the following holds for every $k>k_0$ and every $h\in (\gamma k,k/2)$.

Let $G$ be a graph  of order $n$ with
$\maxdeg(G)\le \Omega k$, with an $(\eps^3,\rho,h)$-\semiregular matching~$\M$
 and with a $(\rho k, \rho)$-dense cover $\DenseSpots$ that  absorbs $\M$. Let $Y\subset
V(G)\setminus  V(\mathcal M)$, and let
$\mathcal C$ be an $h$-ensemble in $G$ with  $\mathcal C\cap (V(\mathcal M)\cup Y)=\emptyset$.
Assume that $U\cap C\in\{\emptyset,C\}$ for each $D=(U,W;F)\in\DenseSpots$ and each $C\in\mathcal C\cup \mathcal V_1(\mathcal M)$. 

Then one of the following holds.
\begin{enumerate}[(I)]
  \item\label{it:AugmAss1} There is a \semiregular matching $\mathcal
  M''\subseteq \mathcal M$ such that $$e\left(\bigcup \mathcal C\cup
  V_1(\mathcal M\setminus\mathcal M''),Y\cup  V_2(\mathcal M'')\right)<\tau nk.$$
\item\label{it:AugmAss2} There is an $(\epsilon,\alpha ,\pi h)$-\semiregular
matching $\mathcal M'$ such that
  \begin{itemize}
    \item[{\bf(C1)}] $|V(\mathcal M)\setminus V(\mathcal M')|\le
    \epsilon n$, and $
    |V(\mathcal M')|\geq  |V(\mathcal M)| + \frac{\tau'}2 n$, and
    \item[{\bf(C2)}] for each $(T,Q)\in\mathcal M'$ there are sets $C_1\in\mathcal V_1(\mathcal
 M)\cup \mathcal C$, $C_2\in \mathcal V_2(\mathcal  M)\cup\{Y\}$ and a dense
 spot $D=(U,W;F)\in\DenseSpots$ such that $T\subset C_1\cap U$ and $Q\subset
C_2\cap W$.
%    maya : we do not need this item
%    \item[{\bf(C3)}] 
%    $e\Big(\big(\bigcup \mathcal C\cup V_1(\mathcal
%    M)\big)\setminus V_1(\mathcal M'), \big(Y\cup  V_2(\mathcal M)\big) \setminus V_2(\mathcal M')\Big)<\rho kn/2$.
  \end{itemize}
\end{enumerate}
\end{lemma}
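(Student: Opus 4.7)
The plan is a standard augmenting-path dichotomy adapted to the semiregular setting. I first apply Lemma~\ref{lem:augmentingPath} to $G$, $\M$, $Y_0:=Y$ and $\mathcal C$ with parameter $\tau_\PARAMETERPASSING{L}{lem:augmentingPath}:=\tau$. If the outcome is \textbf{(M1)}, the sub-matching $\M''$ it returns immediately witnesses conclusion~\eqref{it:AugmAss1}. Otherwise, Lemma~\ref{lem:augmentingPath} produces an augmenting path $\mathcal S=(Y_0,\A_1,Y_1,\ldots,\A_{h^*},Y_{h^*},\mathcal C)$ of length $h^*\le 2\Omega/\tau$ satisfying the quantitative bounds from Definition~\ref{altPath}---most notably $|Y_{i-1}|\ge\frac{\tau}{2\Omega}n$ and $e(A,Y_{i-1})\ge\frac{\tau^2}{8\Omega}k|A|$ for each $A\in\A_i$, together with $e(\bigcup\mathcal C,Y_{h^*})\ge\frac{\tau^2}{16\Omega}kn$. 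I then shift $\M$ along $\mathcal S$ to build~$\M'$.

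The shift is carried out level by level. For each $i\in[h^*]$ I extract, from the bipartite subgraph $H_i:=G[\bigcup\A_i,Y_{i-1}]$, a semiregular matching $\mathcal N_i$ whose pairs $(T,Q)$ each satisfy $T\subseteq A$ for one $A\in\A_i$ and $Q\subseteq C_2$ for one $C_2\in\V_2(\M)\cup\{Y\}$; specifically, $C_2=Y$ when $i=1$, while for $i\ge 2$, $C_2$ is the $\M$-partner of some $A''\in\A_{i-1}$ (so $C_2\subseteq Y_{i-1}$). The extraction uses Lemma~\ref{lem:edgesEmanatingFromDensePairsIII} applied to $H_i$ with ensemble $\A_i$, followed by a refinement step that---via Fact~\ref{fact:BigSubpairsInRegularPairs}---restricts each output pair's $Q$-side to lie inside a single $B$-cluster of $Y_{i-1}$. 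By the analogous construction with ensemble $\mathcal C$ I obtain $\mathcal N_{h^*+1}$ from the bipartite graph between $\bigcup\mathcal C$ and $Y_{h^*}$. The candidate matching is then
\[
\M' := \left\{(A,B)\in\M\,:\,A\notin\bigcup\nolimits_{j=1}^{h^*}\A_j\right\}\;\cup\;\bigcup\nolimits_{i=1}^{h^*+1}\mathcal N_i\;\cup\;\mathcal R,
\]
where $\mathcal R$ is the family of regular sub-pairs obtained, for each touched $\M$-pair $(A,B)$ with $A\in\A_i$, by matching the portion of $A$ unused by $\mathcal N_i$ against the portion of $B$ unused by $\mathcal N_{i+1}$ (or by $\mathcal N_{h^*+1}$ when $i=h^*$), trimming the larger side and invoking Fact~\ref{fact:BigSubpairsInRegularPairs} to preserve regularity. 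Disjointness of $\M'$ follows because the $\A_i$'s are pairwise disjoint along $\mathcal S$ and each $B$-cluster appears in at most one~$Y_i$.

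Property~\textbf{(C2)} is automatic: Lemma~\ref{lem:edgesEmanatingFromDensePairsIII} supplies a single dense spot witnessing each $\mathcal N_i$-pair, while the pairs in $\mathcal R$ and the untouched $\M$-pairs inherit~\textbf{(C2)} from the hypothesis that $\DenseSpots$ absorbs $\M$. For~\textbf{(C1)}, the net gain $|V(\M')|-|V(\M)|\ge\tau' n/2$ is driven by the fresh coverage of $Y$ through $\mathcal N_1$ and of $\bigcup\mathcal C$ through $\mathcal N_{h^*+1}$, both lying outside $V(\M)$; combining the augmenting-path lower bounds on $|Y|$ and on $e(\bigcup\mathcal C,Y_{h^*})$ with the output-size guarantee of Lemma~\ref{lem:edgesEmanatingFromDensePairsIII} yields this bound once $\tau'$ is chosen sufficiently small in terms of $\tau$ and $\Omega$. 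The main obstacle is the loss bound $|V(\M)\setminus V(\M')|\le\epsilon n$: it forces the sub-pair construction in $\mathcal R$ to be essentially lossless, which in turn requires that the $B$-refinement inside each $\mathcal N_i$ does not shrink pair sizes too aggressively and that the size mismatch between the $\mathcal N_i$-used slice of $A\in\A_i$ and the $\mathcal N_{i+1}$-used slice of its $\M$-partner $B$ remains controlled. Ensuring that the cumulative slack across the $O(\Omega/\tau)$ levels stays below $\epsilon n$, while simultaneously keeping each $\mathcal N_i$-pair of size $\ge\pi h$, is where the bulk of the parameter-juggling lives.
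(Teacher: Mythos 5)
Your high-level strategy matches the paper exactly: apply Lemma~\ref{lem:augmentingPath} and, in case \textbf{(M2)}, shift $\M$ along the augmenting path by repeatedly invoking Lemma~\ref{lem:edgesEmanatingFromDensePairsIII} on the consecutive bipartite graphs $G[\bigcup\A_i,Y_{i-1}]$ and the terminal graph into $\bigcup\mathcal C$. The gap is in how you control the loss $|V(\M)\setminus V(\M')|$. You build the level matchings $\mathcal N_1,\ldots,\mathcal N_{h^*+1}$ \emph{forward}, and then try to repair touched $\M$-pairs $(A,B)$ with $A\in\A_i$ via a family $\mathcal R$ obtained by ``matching the portion of $A$ unused by $\mathcal N_i$ against the portion of $B$ unused by $\mathcal N_{i+1}$, trimming the larger side.'' The amount trimmed on the pair $(A,B)$ is $\bigl||A\cap V_1(\mathcal N_i)|-|B\cap V_2(\mathcal N_{i+1})|\bigr|$, and in the forward construction nothing forces these two quantities to be comparable — $\mathcal N_{i+1}$ is extracted before one looks at how $\mathcal N_i$ sits in $A$, and Lemma~\ref{lem:edgesEmanatingFromDensePairsIII} gives no per-$B$ control over its $V_2$-side. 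Summed over all touched $\M$-edges and all $O(\Omega/\tau)$ levels, this mismatch can be of order $n$, not $O(\epsilon n)$. You flag this (``where the bulk of the parameter-juggling lives''), but the issue is not parameter-juggling: with a forward-only construction there is no parameter choice that bounds the slack.

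The paper's proof avoids this with a two-pass construction. In Step 3 it builds the level matchings \emph{backwards}, from $\ell=j^*$ down to $\ell=0$: after extracting $\mathcal M^{(\ell)}$, it chooses $P^{(\ell-1)}\subseteq\bigcup\A_\ell$ as a union of subsets $\tilde A\subset A$ of cardinality exactly $|\tilde A|=|B\cap V_2(\mathcal M^{(\ell)})|$, giving the key exact size-matching property~\eqref{dasneuef}. The greedy choice of $\tilde A$ (the highest-degree vertices of $A$ into $Y_{\ell-1}$) is what preserves the edge density~\eqref{KantOrGoethe} needed to re-apply Lemma~\ref{lem:edgesEmanatingFromDensePairsIII} one level down; note that this greedy step only works on the $A$-side, because the augmenting-path definition bounds degrees out of each $A\in\A_i$ but says nothing analogous about degrees out of the sets in $Y_i$, so a forward restriction of the $Y$-side cannot be made density-preserving the same way. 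Then Step 4 does a \emph{forward} harmonisation pass producing nested sub-matchings $\mathcal N^{(\ell)}$: the chain $|A\cap V_1(\mathcal N^{(\ell-1)})|\le|A\cap P^{(\ell-1)}|=|B\cap V_2(\mathcal M^{(\ell)})|$ (inequality~\eqref{thisisenough}) lets one pick $X^{(\ell)}\subseteq V_2(\mathcal M^{(\ell)})$ with $|B\cap X^{(\ell)}|=|A\cap V_1(\mathcal N^{(\ell-1)})|$ for every $\M$-edge, and the per-level loss is then at most $\tfrac{\epsilon}{2}|V_2(\mathcal M^{(\ell)})|$, which telescopes to $\le\epsilon n$. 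Without the backward pass establishing~\eqref{dasneuef}, the ensuing forward harmonisation has nothing to telescope against, which is precisely the hole in your argument.
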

\begin{proof}
We divide the proof into five
steps.

\paragraph{Step 1: Setting up the parameters.}
Suppose that $\Omega$ and $\tau$ are given. 
For
$\ell=0,1,\ldots ,\lceil2\Omega/\tau\rceil$, we define the auxiliary para\-meters

\begin{equation}\label{auroraenpekin}
\tau^{(\ell)}:= \left(\frac{\tau^2}{32\Omega}\right)^{\lceil \frac{2\Omega}\tau\rceil -\ell +2}\;,
\end{equation}
and set $$\tau':=\frac{\tau^{(0)}}{2\Omega}.$$

Given $\rho$, we define 
$$ \alpha:=\frac{\tau'\rho}{16\Omega}.$$
Then, given $\epsilon$, 
for $\ell=0,1,\ldots ,\lceil2\Omega/\tau\rceil$, we define the further auxiliary parameters
$$\mu^{(\ell)}:= \alpha_\PARAMETERPASSING{L}{lem:edgesEmanatingFromDensePairsIII}\big(\Omega,\rho,\eps^3, \tau^{(\ell )}\big)$$
 which are given by
Lemma~\ref{lem:edgesEmanatingFromDensePairsIII} for
input parameters $\Omega_\PARAMETERPASSING{L}{lem:edgesEmanatingFromDensePairsIII}:=\Omega$, $\rho_\PARAMETERPASSING{L}{lem:edgesEmanatingFromDensePairsIII}:=\rho$, 
$\epsilon_\PARAMETERPASSING{L}{lem:edgesEmanatingFromDensePairsIII}:=\epsilon ^3$, and $\tau_\PARAMETERPASSING{L}{lem:edgesEmanatingFromDensePairsIII}:=\tau^{(\ell )}$.
Set
$$\pi:=\frac{\epsilon}{2}  \cdot \min\left\{\mu^{(\ell)}\::\:\ell=0,\ldots,\lceil 2\Omega/\tau\rceil\right\}\;.$$

Given the next\footnote{in the order of quantification from the statement of the lemma} input parameter $\gamma$, 
Lemma~\ref{lem:edgesEmanatingFromDensePairsIII} for parameters as above and the final input $\nu_\PARAMETERPASSING{L}{lem:edgesEmanatingFromDensePairsIII}:=\gamma$ yields $k_{0_\PARAMETERPASSING{L}{lem:edgesEmanatingFromDensePairsIII}}=:k_0^{(\ell)}$, set
$$k_0:=\max\left\{k_0^{(\ell)}\::\:\ell=0,\ldots,\lceil 2\Omega/\tau\rceil \right\}.$$

\paragraph{Step 2: Finding an augmenting path.}
We apply Lemma~\ref{lem:augmentingPath} to $G$, $\tau$, $\mathcal M$, $Y$ and $\mathcal C$. Since~$\mathbf{(M1)}$ corresponds to~\eqref{it:AugmAss1}, let us assume that the outcome of the lemma is $\mathbf{(M2)}$. Then there is a $(\frac\tau{2\Omega}, \frac{\tau^2}{8\Omega}k,\frac{\tau^2}{16\Omega} k)$-augmenting path $\mathcal S'=(Y_0,\A_1,Y_1, \A_2,$ $Y_2,\ldots,$
$\A_{j^*}, Y_{j^*}, \C)$ for $\mathcal M$ starting from $Y_0:=Y$
such that $j^* \leq 2\Omega/\tau$.
%
% the following would only be needed for (C3):
%
%Let $j^*$ be a minimal $j$ with~\eqref{CYedges}.

Our aim is now to show that~\eqref{it:AugmAss2} holds. 

\paragraph{Step 3: Creating parallel
matchings.}

Inductively, for $\ell=j^*,j^*-1,\dots ,0$ we shall define
auxiliary bipartite induced subgraphs
$H^{(\ell)}\subset G$  with colour classes $P^{(\ell)}$ and $Y_{\ell}$ that satisfy
\begin{enumerate}[(a)]
\item \label{Hell}
$e(H^{(\ell)})\ge\tau^{(\ell)} kn,$
\end{enumerate}
and $(\eps^3,2\alpha,\mu^{(\ell)}h)$-\semiregular matchings $\mathcal M^{(\ell)}$ that satisfy
 \begin{enumerate}[(a)]\setcounter{enumi}{1}
 \item $V_1(\mathcal M^{(\ell)})\subseteq P^{(\ell)}$,\label{liegtinP}
  \item for each $(A',B')\in\mathcal M^{(\ell)}$ there are a dense spot $(U,W;F)\in
  \DenseSpots$ and a set $A\in \V_1(\mathcal M)$ (or a set $A\in\mathcal C$ if $\ell=j^*$)  such that  $A'\subset U\cap A$  and $B'\subset W\cap Y_{\ell}$,\label{ribot}
  % there is an item here which was only needed for (C3)
  % \item $e\big(P^{(\ell)}\setminus V_1(\mathcal M^{(\ell)}), Y_{\ell}\setminus V_2(\mathcal M^{(\ell)})\big)<\frac{\rho^{(\ell)}}{2}kn$, \label{ausschoepf}
  \item $|V(\mathcal M^{(\ell)})|\ge \frac{\tau^{(\ell)} }{2\Omega}n$, and\label{dasaltealpha}
  \item\label{dasneuef} $|B\cap V_2(\mathcal M^{(\ell)})|=|A\cap P^{(\ell -1)}|$ for each edge $(A,B)\in \mathcal M$, if $\ell>0$.
\end{enumerate}

We take $H^{(j^*)}$ as the induced
bipartite subgraph of $G$ with colour classes  $P^{(j^*)}:=\bigcup \mathcal C$
and $Y_{j^*}$. Definition~\ref{altPath}~\eqref{augm5} together with~\eqref{auroraenpekin} ensures~\eqref{Hell} for $\ell=j^*$.
 Now, for $\ell \leq j^*$, suppose $H^{(\ell)}$ is already defined. Further, if $\ell< j^*$ suppose also that $\mathcal M^{(\ell +1)}$ is already defined. We shall define $\mathcal M^{(\ell)}$, and, if $\ell >0$, we shall also define $H^{(\ell-1)}$.

Observe that  $(G,\DenseSpots,H^{(\ell)},\mathcal A_\ell)\in\mathcal G(n,k,\Omega,\rho,\frac{h}k,\tau^{(\ell)})$, because of~\eqref{Hell} and the assumptions of the lemma. So, 
applying Lemma~\ref{lem:edgesEmanatingFromDensePairsIII} to
  $(G,\DenseSpots,H^{(\ell)},\mathcal A_\ell)$  and noting that $\frac{\tau^{(\ell)}\rho}{8\Omega}\geq 2\alpha$, we obtain
  an
$(\eps^3,2\alpha,\mu^{(\ell)}h)$-\semiregular matching $\mathcal
M^{(\ell)}$ that satisfies conditions~\eqref{liegtinP}--\eqref{dasaltealpha}.

If $\ell>0$, we define $H^{(\ell-1)}$ as follows. 
For each $(A,B)\in \mathcal M$ take a set $\tilde A\subset A$ of
cardinality $|\tilde A|=|B\cap V(\mathcal M^{(\ell)})|$ so that 
\begin{equation}\label{schoengross}
e(\tilde A,Y_{\ell-1})\ \ge\ \frac{\tau^2}{8\Omega}k\cdot |\tilde A| \;.
\end{equation}
This is possible by Definition~\ref{altPath}~\eqref{alte4}: just choose those vertices from $A$ for $\tilde A$ that send most edges to $Y_{\ell -1}$.
Let $P^{(\ell-1)}$ be the union of all the sets $\tilde A$.
Then, \eqref{dasneuef} is satisfied. Furthermore,
$$|P^{(\ell-1)}|\ =\ |V_2(\mathcal M^{(\ell)})|
\ \overset{\eqref{dasaltealpha}}\ge \ \frac{\tau^{(\ell)}}{4\Omega}n.$$ 
So, by~\eqref{schoengross},
\begin{equation}\label{KantOrGoethe}
e(P^{(\ell-1)},Y_{\ell-1})\ \ge\ \frac{\tau^2}{8\Omega} k \cdot |P^{(\ell-1)}|\  \geq \ \frac{\tau^2\cdot \tau^{(\ell)}}{32\Omega^2}kn \ \overset{\eqref{auroraenpekin}}= \ \tau^{(\ell-1)}kn\;.
\end{equation}

We let $H^{(\ell -1)}$ be the
bipartite subgraph of $G$ induced by the colour classes $P^{(\ell-1)}$ and $Y_{\ell-1}$. 
 Then~\eqref{KantOrGoethe}
establishes~\eqref{Hell} for $H^{(\ell -1)}$. This finishes step $\ell$.\footnote{Recall that  the matching $\mathcal M^{(\ell -1)}$ is only to be defined in step $\ell-1$.}

\paragraph{Step 4: Harmonising the matchings.}
Our \semiregular matchings $\mathcal
M^{(0)},\ldots,\mathcal M^{(j^*)}$ will be a good base for constructing the \semiregular matching $\mathcal M'$ we are after. However, we do not know anything about  $|B\cap V_2(\mathcal M^{(\ell)})| - |A\cap V_1(\mathcal M^{(\ell -1)})|$ for the $\M$-edges $(A,B)\in \mathcal M$.
But this term will be crucial in determining how much of $V(\M)$ gets lost when we replace some of its $\M$-edges with $\bigcup \M^{(\ell)}$-edges. For this reason, we refine $\mathcal M^{(\ell)}$ in a way that its $\mathcal M^{(\ell)}$-edges become almost equal-sized.

Formally, we shall inductively construct
%(in this order) 
\semiregular matchings $\mathcal N^{(0)},\ldots, \mathcal
N^{(j^*)}$ such that  for $\ell=0,\ldots ,j^*$ we have
\begin{enumerate}[(A)]
\item \label{eq:NellSemiregular} $\mathcal N^{(\ell)}$ is an
$(\epsilon,\alpha,\pi h)$-\semiregular matching,
\item\label{absorbi}$\mathcal M^{(\ell)}$ absorbs $\mathcal N^{(\ell)}$,
\item \label{dasneueF} if $\ell>0$ and $(A,B)\in\mathcal M$ with $A\in\mathcal A_\ell$ then $|A\cap  V(\mathcal N^{(\ell-1)})|\ge |B \cap V(\mathcal N^{(\ell)})| $, and
\item \label{eq:SizeNInduc} $|V_2(\mathcal N^{(\ell)})|\ge |V_1(\mathcal
N^{(\ell-1)})| - \frac{\epsilon}{2}\cdot |V_2(\mathcal M^{(\ell)})|$  if
$\ell>0$ \ and $|V_2(\mathcal N^{(0)})|\ge \frac{\tau^{(0)}}{2\Omega} n = \tau'
n$.
\end{enumerate}

Set $\mathcal N^{(0)}:=\mathcal M^{(0)}$. Clearly~\eqref{absorbi} holds for
$\ell=0$,~\eqref{eq:NellSemiregular} is easy to check, and~\eqref{dasneueF} is void. Finally, Property~\eqref{eq:SizeNInduc} holds because of~\eqref{dasaltealpha}.
Suppose now $\ell>0$ and that we already constructed matchings
$\mathcal N^{(0)},\ldots,\mathcal N^{(\ell-1)}$ satisfying Properties~\eqref{eq:NellSemiregular}--\eqref{eq:SizeNInduc}.

 Observe that for any $(A,B)\in\mathcal M$ we have that
\begin{equation}\label{thisisenough}
|B\cap  V_2(\mathcal M^{(\ell)})| \
\overset{\eqref{liegtinP}, \eqref{dasneuef}}\geq \ |A\cap V_1(\mathcal M^{(\ell-1)})| \
\ge\ |A\cap V_1(\mathcal N^{(\ell-1)})|,
\end{equation} 
where the last inequality holds because of~\eqref{absorbi} for $\ell -1$.

So, we can choose a subset $X^{(\ell)}\subseteq V_2(\mathcal M^{(\ell)})$ such that 
$|B\cap X^{(\ell)}|=|A\cap V(\mathcal N^{(\ell-1)})|$ for each $(A,B)\in\mathcal M$.
Now, for each $(S,T)\in \mathcal M^{(\ell)}$ write $\widehat{T}:=T\cap X^{(\ell)}$, and choose a subset $\widehat{S}$ of $S$ of size $|\widehat{T}|$. 
 Set
\begin{equation}\label{eq:NelDef}
\mathcal{N}^{(\ell)}:=\left\{(\widehat{S},\widehat{T})\::\:(S,T)\in \mathcal
M^{(\ell)}, |\widehat{T}|\geq \frac{\epsilon}{2}\cdot |T| \right\}. 
\end{equation}
Then~\eqref{absorbi}  and~\eqref{dasneueF} hold for $\ell$.

For~\eqref{eq:NellSemiregular}, note that Fact~\ref{fact:BigSubpairsInRegularPairs} implies that $\mathcal N^{(\ell)}$ is an $\left(\eps,2\alpha-\eps^3,\frac{\epsilon}{2}\mu^{(\ell)}h\right)$-\semiregular matching.

In order to verify~\eqref{eq:SizeNInduc}, it suffices to observe that
\begin{align*}
|V_2(\mathcal N^{(\ell)})|
&=\sum_{(\widehat S,\widehat T)\in\mathcal N^{(\ell)}} |\widehat{T}|\geq   | X^{(\ell)}|\  -\ \sum_{(S,T)\in\mathcal M^{(\ell)} } \frac{\epsilon}{2} \cdot |T|\\[8pt]
&\geq  \sum_{(A,B)\in\mathcal M} |A\cap V_1(\mathcal N^{(\ell-1)})|\ -\ \frac{\epsilon}{2} \cdot |V_2(\mathcal M^{(\ell)} )|
= |V_1(\mathcal N^{(\ell-1)})|\ -\ \frac{\epsilon }{2} \cdot |V_2(\mathcal M^{(\ell)} )|.
\end{align*}

\paragraph{Step 5: The final matching.}
Suppose that $(A,B)\in\mathcal M$ with $A\in\mathcal A_\ell$ for some $\ell\in\{1,2,\ldots,j^*\}$. Then, set $A':=A\setminus
V_1(\mathcal N^{(\ell-1)})$. Also, choose a set $B'\subset B\setminus V_2(\mathcal N^{(\ell)})$ of cardinality $|A'|$. This is possible by~\eqref{dasneueF}. By~\eqref{eq:NelDef} we deduce that 
\begin{equation}\label{eq:B'Size}
|B\setminus V_2(\mathcal N^{(\ell)})|-|B'|\le \frac{\varepsilon}{2}|B|\;.
\end{equation}
  We consider the set $\mathcal L\subset \mathcal M$ consisting of all $\M$-edges $(A,B)\in\mathcal M$ with $|A'|>\frac{\epsilon}{2} \cdot |A|$. 

Set 
$$\mathcal K:=\{(A', B'):(A,B)\in\mathcal L\}.$$ 
By the assumption of the lemma, for every $(A',B')\in\mathcal K$
there are an edge $(A,B)\in\mathcal M$ and a dense spot $D=(U,W;F)\in\DenseSpots$ such that 
\begin{equation}\label{lacolegiala}
\text{$A'\subset A\subset U$ and
$B'\subset B\subset W$.}
\end{equation}
 Since $\mathcal M$ is $(\eps^3,\rho,h)$-\semiregulars, Fact~\ref{fact:BigSubpairsInRegularPairs} implies that $\mathcal K$ is an $(\eps,\rho-\eps^3,\frac{\epsilon}{2} h)$-\semiregular matching. Set $$\mathcal M':=\mathcal
K\cup\bigcup_{\ell=0}^{j^*}\mathcal N^{(\ell)}.$$ It is easy to check that $\mathcal M'$ is an
$(\epsilon,\alpha,\pi h)$-\semiregular matching. Using~\eqref{lacolegiala} together with~\eqref{absorbi} and \eqref{ribot}, we see that  {\bf(C2)} holds for $\mathcal M'$. 
%the next sentence is unnecessary if (C3) remains deleted.
%It remains to check  {\bf(C1)} and {\bf(C3)} for $\mathcal M'$.

In order to see {\bf(C1)}, we calculate
\begin{align*}
&|V(\mathcal M) \setminus V(\mathcal M')|=\sum_{(A,B)\in\mathcal M}\Big(|A\setminus V_1(\cup_{\ell=0}^{j^*}\mathcal N^{(\ell)}\cup \mathcal K)|+|B\setminus V_2(\cup_{\ell=0}^{j^*}\mathcal N^{(\ell)}\cup \mathcal K)|\Big)\\
&\leByRef{eq:B'Size} \underbrace{\sum_{(A,B)\in\mathcal M\setminus \mathcal L}\left(|A'\cup B'|+\frac{\varepsilon}{2}|B|\right)}_{\texttt{(sum1)}}\;+\; 
\underbrace{\sum_{(A,B)\in\mathcal L}\ \Big( |A\setminus V_1(\cup_{\ell=1}^{j^*}\mathcal N^{(\ell-1)}\cup \mathcal K))| + |B\setminus V_2(\cup_{\ell=1}^{j^*}\mathcal N^{(\ell)}\cup \mathcal K)| \Big)}_{\texttt{(sum2)}}\;.
\end{align*}
In \texttt{(sum2)}, consider an arbitrary term corresponding to $(A,B)$. By the definition of $\mathcal K$, the term $|A\setminus V_1(\cup_{\ell=1}^{j^*}\mathcal N^{(\ell-1)}\cup \mathcal K)|$ is zero. To treat the term $|B\setminus V_2(\cup_{\ell=1}^{j^*}\mathcal N^{(\ell)}\cup \mathcal K)|$, we recall that $|A|=|B|$ and $|A'|=|B'|$ (in the definition of $\mathcal K$). This gives that $|B\setminus V_2(\cup_{\ell=1}^{j^*}\mathcal N^{(\ell)}\cup \mathcal K)|=|A \cap V_1(\cup_{\ell=1}^{j^*}\mathcal N^{(\ell-1)}) | - |B \cap V_2(\cup_{\ell=1}^{j^*}\mathcal N^{(\ell)})|$. This leads to
\begin{align*}
|V(\mathcal M) \setminus V(\mathcal M')|&\le
\sum_{(A,B)\in\mathcal M\setminus \mathcal L}\left(|A'\cup B'|+\frac{\varepsilon}{2}|B|\right)\\
&~~~~+\ 
\sum_{(A,B)\in\mathcal L}\ \sum_{\ell=1}^{j^*} \Big( |A\cap V_1(\mathcal N^{(\ell -1)})| - |B\cap V_2(\mathcal N^{(\ell)})| \Big)\\[10pt]
&\le 
  \sum_{(A,B)\in\mathcal M\setminus \mathcal L}\left(\frac{\epsilon}{2}|A|+\varepsilon|B|\right) \ 
+ \ 
\sum_{\ell=1}^{j^*} \Big( |V_1(\mathcal N^{(\ell -1)})| - |V_2(\mathcal N^{(\ell)})| \Big) \\[10pt]
&\overset{\eqref{eq:SizeNInduc}}\le
\frac{3\epsilon}{4}   n\ 
+\ 
\sum_{\ell=1}^{j^*}\frac{\epsilon }{2} \cdot |V_2(\mathcal M^{(\ell)} )|\le
\epsilon  n\;.
\end{align*}

Using the fact that $V_2(\mathcal N^{(0)})\subseteq V(\mathcal M')\setminus V(\mathcal M)$ the last calculation also implies that
 \begin{align*} 
|V(\mathcal M')|-|V(\mathcal M)|
&\ge |V_2(\mathcal N^{(0)})|-|V(\mathcal M) \setminus V(\mathcal M')|\overset{\eqref{eq:SizeNInduc}}\ge
\tau' n -\epsilon n>\frac{\tau'}2 n\;,
\end{align*}
since $\eps<\alpha\leq \tau'/2$ by assumption.
%We have thus shown {\bf(C1)}.
\end{proof}

\medskip

Iterating Lemma~\ref{lem:AugmentORSeparate} we prove the main result of the section.

\begin{lemma}\label{lem:Separate}
For every
$\Omega\in \NN$ and $\rho\in (0,1/\Omega)$ there exists a number $\beta>0$ such that for
every $\epsilon\in (0,\beta)$, 
there are
$\epsilon',\pi>0$ such that for each $\gamma>0$ there exists a number $k_0\in\NN$ such that  the following holds for
every $k>k_0$ and $c\in(\gamma k,k/2)$.

Let $G$ be a graph  of order $n$, with $\maxdeg(G)\le \Omega k$. Let 
$\DenseSpots$ be a $(\rho k, \rho)$-dense cover of $G$, and let $\mathcal M$ be
an $(\epsilon',\rho,c)$-\semiregular matching that is absorbed by $\DenseSpots$. Let
$\mathcal C$ be a $c$-ensemble in $G$ with $\mathcal C\cap (V(\mathcal M))=\emptyset$. 
Let $Y\subset V(G)\setminus (V(\mathcal M)\cup
 \bigcup\mathcal C)$.
Assume
that for each $(U,W;F)\in\DenseSpots$, and for each $C\in\mathcal V_1(\mathcal M)\cup \mathcal C$ we have that 
 \begin{equation}\label{eq:510ass1}
 U\cap C\in\{\emptyset,C\}\;.
 \end{equation} 

Then there exists an
$(\epsilon,\beta,\pi c)$-\semiregular matching $\mathcal
M'$ such that
  \begin{enumerate}[(i)]
    \item\label{it:Sep1} $|V(\mathcal M)\setminus V(\mathcal M')|\le
    \epsilon n$,
    \item\label{it:Sep2} 
 for each $(T,Q)\in\mathcal M'$ there are sets $C_1\in\mathcal V_1(\mathcal
 M)\cup \mathcal C$, $C_2\in \mathcal V_2(\mathcal  M)\cup\{Y\}$ and a dense
 spot $D=(U,W;F)\in\DenseSpots$ such that $T\subset C_1\cap U$ and $Q\subset
C_2\cap W$, and
    \item\label{it:Sep3} $\mathcal M'$ can be partitioned into $\mathcal M_1$
    and $\mathcal M_2$ so that
    $$e\left( (\bigcup \mathcal C\cup V_1(\mathcal M )) \setminus   V_1(\mathcal M_1) \  , \ (Y\cup V_2(\mathcal M)) \setminus V_2(\mathcal M_2) \right)\  < \ \rho kn\;.$$
  \end{enumerate}
\end{lemma}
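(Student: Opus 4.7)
The plan is to iterate Lemma~\ref{lem:AugmentORSeparate}: at each step, this lemma either hands us the desired partition (via its case~(I)) or replaces our current semiregular matching with a substantially larger one (via its case~(II)). Since each case-(II) step grows $|V(\mathcal M)|$ by at least $\tau'n/2$, and this quantity is bounded by $n$, the process terminates in at most $N:=\lceil 2/\tau'\rceil$ iterations, where $\tau'$ is obtained from Lemma~\ref{lem:AugmentORSeparate} applied to $\tau:=\rho/2$ (which lies in $(0,1/(2\Omega))$ by the hypothesis $\rho<1/\Omega$).

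Parameters are chosen via a cascade. The density of the matching may decrease along the sequence, so starting from $\rho_0:=\rho$ I set $\rho_{i+1}:=\alpha_{\PARAMETERPASSING{L}{lem:AugmentORSeparate}}(\Omega,\rho/2,\rho_i)$ and take $\beta:=\rho_N$. The regularity degrades in the opposite direction (output $\eps$-regular comes from input $\eps^3$-regular), so I set $\epsilon':=\epsilon^{3^N}$. The output $\pi$ is the product of the per-step $\pi$'s, and $k_0$ is the maximum of the per-step thresholds, each depending on $\gamma$ through the ensemble-size requirement.

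The delicate point in the iteration is that Lemma~\ref{lem:AugmentORSeparate} requires $\mathcal C$ and $Y$ to be disjoint from the current matching. I maintain these by \emph{refinement}: after each case-(II) step, I replace each $C\in\mathcal C^{(i)}$ by its residual $C\setminus V_1(\mathcal M^{(i+1)})$ (keeping it in $\mathcal C^{(i+1)}$ only if the residual still satisfies the next call's ensemble-size threshold) and analogously set $Y^{(i+1)}:=Y^{(i)}\setminus V_2(\mathcal M^{(i+1)})$. The structural condition $U\cap C\in\{\emptyset,C\}$ is preserved under such refinement, since a dense spot either contains a cluster entirely or misses it. When case~(I) eventually triggers at some iteration $i^*$, I output $\mathcal M':=\mathcal M^{(i^*)}$, $\mathcal M_1:=\mathcal M''$, and $\mathcal M_2:=\mathcal M^{(i^*)}\setminus\mathcal M''$, where $\mathcal M''\subseteq\mathcal M^{(i^*)}$ is the sub-matching delivered by case~(I).

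Conclusions~\eqref{it:Sep1} and~\eqref{it:Sep2} are easy: \eqref{it:Sep1} follows by summing the per-step losses $|V(\mathcal M^{(i)})\setminus V(\mathcal M^{(i+1)})|\le\epsilon_i n$ from \textbf{(C1)} over at most $N$ iterations, and \eqref{it:Sep2} follows by transitively chaining \textbf{(C2)} across iterations, as clusters and dense-spot structure only refine. The main work is in~\eqref{it:Sep3}. Case~(I) delivers the inequality $e(\bigcup\mathcal C^{(i^*)}\cup V_1(\mathcal M^{(i^*)}\setminus\mathcal M''),Y^{(i^*)}\cup V_2(\mathcal M''))<\tau nk=\rho nk/2$. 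Unfolding the set definitions in~\eqref{it:Sep3} and using this bound, the ``excess'' edges counted by~\eqref{it:Sep3} but not by case~(I) must have an endpoint either in (a) vertices lost from the original $V(\mathcal M)$ during iteration (at most $\sum_i\epsilon_i n$ of them), or in (b) small residuals of refined $\mathcal C$-clusters and of $Y$ that were discarded because they fell below the ensemble threshold (at most $\sum_i\pi_i n$ of them). The main obstacle is to ensure these residual and loss contributions remain negligible: since the output $\pi$ is merely the \emph{product} of the $\pi_i$'s, I may freely shrink each $\pi_i$ independently, and similarly each $\epsilon_i$. Multiplying the resulting vertex budget by $\maxdeg(G)\le\Omega k$ bounds the extra edge contribution well below $\rho nk/2$, which together with the case-(I) bound keeps the total strictly under $\rho nk$, completing~\eqref{it:Sep3}.
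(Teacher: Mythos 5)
Your proposal follows the same overall route as the paper: iterate Lemma~\ref{lem:AugmentORSeparate} with a parameter cascade, grow the matching via case~(II) until the $\tau' n/2$-per-step growth forces termination after $O(1/\tau')$ steps, then read off $\mathcal M_1,\mathcal M_2$ from case~(I). The parameter bookkeeping ($\beta$ from the decreasing $\rho$-cascade, $\epsilon'=\epsilon^{3^N}$, $\pi$ as a product, $k_0$ as a max) also matches. However, two pieces of your bookkeeping are looser than they should be, and one is on the edge of a genuine gap.

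First, the ensemble you feed to Lemma~\ref{lem:AugmentORSeparate} at later steps should include residuals of $\mathcal V_1(\mathcal M)$, not just of $\mathcal C$. The paper takes, at step~$\ell$, $\widetilde{\mathcal C}:=\{C\setminus V(\mathcal M^{(\ell)}):C\in\mathcal V_1(\mathcal M)\cup\mathcal C,\ |C\setminus V_1(\mathcal M^{(\ell)})|\ge\Pi^{(\ell)}c\}$ — derived \emph{fresh} from the fixed collection $\mathcal V_1(\mathcal M)\cup\mathcal C$, not iteratively refined from the previous step's ensemble. This matters for conclusion~\eqref{it:Sep3}: that item bounds edges leaving $(\bigcup\mathcal C\cup V_1(\mathcal M))\setminus V_1(\mathcal M_1)$, and the case-(I) inequality only controls the part lying inside the current ensemble and current $V_1$. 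Vertices of $V_1(\mathcal M)$ that dropped out of the iterated matching, but were never put into any ensemble, are controlled only by the crude $\maxdeg\le\Omega k$ bound. Your category (a) does account for them, but the fresh-derivation device is what lets the paper bound the \emph{entire} excess set in one line by $|X|\le\Pi^{(\ell)}n\le\frac{\rho}{2\Omega}n$, avoiding any accumulation over iterations.

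Second, your claim that ``the output $\pi$ is merely the product of the $\pi_i$'s, I may freely shrink each $\pi_i$ independently'' is not right as stated: the $\pi_i$ are \emph{outputs} of Lemma~\ref{lem:AugmentORSeparate}, not parameters you pick. What you can choose is the starting ensemble threshold $\Pi^{(0)}$ (the paper takes $\Pi^{(0)}=\frac{\rho}{2\Omega}$), the per-step $\epsilon_i$, and nothing else. The accumulated-discard sum $\sum_i\Pi^{(i)}n$ is then controlled only because the $\Pi^{(i)}$ decay geometrically; this needs to be said, and one should check that $\Pi^{(0)}$ is taken small enough that even the geometric sum contributes $<\rho kn/2$ extra edges. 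The paper sidesteps this entirely by choosing the maximal $\ell$ for which a matching with properties (a)--(d) exists, then doing a single, non-accumulated excess count at that $\ell$. Your iterative refinement can be made to work, but the present argument asserts rather than verifies the final edge budget in~\eqref{it:Sep3}, and the mechanism you invoke (shrinking $\pi_i$) is not available.
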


\begin{proof}
Let $\Omega$ and $\rho$ be given. Let $\tau':=\tau'_\PARAMETERPASSING{L}{lem:AugmentORSeparate} $ be the output given by Lemma~\ref{lem:AugmentORSeparate} for input
parameters $\Omega_\PARAMETERPASSING{L}{lem:AugmentORSeparate} :=\Omega$ and  $\tau_\PARAMETERPASSING{L}{lem:AugmentORSeparate} :=\rho/2$.

Set $\rho^{(0)}:=\rho$, 
set $L:=\lceil 2/\tau'\rceil +1$, and for $\ell\in [L]$, 
 inductively define
$\rho^{(\ell)}$ to be the output $\alpha_\PARAMETERPASSING{L}{lem:AugmentORSeparate} $ given by Lemma~\ref{lem:AugmentORSeparate} for the further input
parameter $\rho_\PARAMETERPASSING{L}{lem:AugmentORSeparate} :=\rho^{(\ell-1)}$ (keeping $\Omega_\PARAMETERPASSING{L}{lem:AugmentORSeparate}=\Omega $ and  $\tau_\PARAMETERPASSING{L}{lem:AugmentORSeparate}=\rho/2$ fixed).
Then $\rho^{(\ell+1)}\leq\rho^{(\ell)}$ for all $\ell$.
 Set
$\beta:=\rho^{(L)}$.

Given $\epsilon<\beta$
 we set
$\epsilon^{(\ell)}:=(\epsilon/2)^{3^{L-\ell}}$ for $\ell\in[L]\cup\{0\}$, and 
set 
$\epsilon':=\epsilon^{(0)}$.
Clearly,
\begin{equation}\label{summederepsilons}
\sum_{\ell=0}^L\epsilon^{(\ell)} \ \leq \ %2\epsilon^{(L)}\ =\
 \epsilon.
\end{equation}

Now, for $\ell+1\in[L]$, 
let $\pi^{(\ell)}:=\pi_\PARAMETERPASSING{L}{lem:AugmentORSeparate}$ be given by Lemma~\ref{lem:AugmentORSeparate} for input parameters $\Omega_\PARAMETERPASSING{L}{lem:AugmentORSeparate} :=\Omega$, $\tau_\PARAMETERPASSING{L}{lem:AugmentORSeparate} :=\rho/2$, $\rho_\PARAMETERPASSING{L}{lem:AugmentORSeparate} :=\rho^{(\ell)}$ and $\eps_\PARAMETERPASSING{L}{lem:AugmentORSeparate} :=\epsilon^{(\ell+1)}$.  For $\ell\in[L]\cup\{0\}$, set $\Pi^{(\ell)}:=\frac{\rho}{2\Omega}\prod_{j=0}^{\ell-1}\pi^{(j)}$. Let $\pi:=\Pi^{(L)}$.

Given $\gamma$, let $k_0$ be the maximum of the lower bounds $k_{0_\PARAMETERPASSING{L}{lem:AugmentORSeparate}}$ given by Lemma~\ref{lem:AugmentORSeparate} for input parameters $\Omega_\PARAMETERPASSING{L}{lem:AugmentORSeparate} :=\Omega$, $\tau_\PARAMETERPASSING{L}{lem:AugmentORSeparate} :=\rho/2$, $\rho_\PARAMETERPASSING{L}{lem:AugmentORSeparate} :=\rho^{(\ell -1)}$, $\eps_\PARAMETERPASSING{L}{lem:AugmentORSeparate} :=\epsilon^{(\ell)}$, $\gamma_\PARAMETERPASSING{L}{lem:AugmentORSeparate}:=\gamma\Pi^{(\ell)}$,  for $\ell\in[L]$.

\medskip

Suppose now we are given $G$, $\mathcal D$, $\mathcal C$, $Y$ and $\mathcal M$. Suppose further that $c>\gamma k>\gamma k_0$. 
Let $\ell\in\{0,1,\ldots,L\}$ be maximal subject to the condition that there is a matching $\mathcal M^{(\ell)}$ with the following properties:

\begin{enumerate}[(a)]
 \item\label{MellowYellow}
 $\mathcal M^{(\ell)}$ is an $( \epsilon^{(\ell)}, \rho^{(\ell)},\Pi^{(\ell)} c)$-\semiregular matching,
\item 
 \label{eq:NatImpAl}
$|V(\mathcal M^{(\ell)})|\ge \ell\cdot \frac{\tau'}2 n$,
\item
\label{epsipepsi}
$|V(\mathcal M)\setminus V(\mathcal M^{(\ell)})|\le \sum_{i=0}^\ell\epsilon^{(i)} n$, and
    \item\label{dontworryaboutMell} 
 for each $(T,Q)\in\mathcal M^{(\ell)}$ there are sets $C_1\in\mathcal V_1(\mathcal
 M)\cup \mathcal C$, $C_2\in \mathcal V_2(\mathcal  M)\cup\{Y\}$ and a dense
 spot $D=(U,W;F)\in\DenseSpots$ such that $T\subset C_1\cap U$ and $Q\subset
C_2\cap W$.
\end{enumerate}

Observe that such a number $\ell$ exists, as for $\ell=0$ we may take $\mathcal M^{(0)}=\mathcal M$. Also note that $\ell\leq 2/\tau' <L$ because of~\eqref{eq:NatImpAl}.

We now  apply Lemma~\ref{lem:AugmentORSeparate} with input parameters
$\Omega_\PARAMETERPASSING{L}{lem:AugmentORSeparate}:=\Omega$, $\ \tau_\PARAMETERPASSING{L}{lem:AugmentORSeparate}:=\rho/2$, $\ \rho_\PARAMETERPASSING{L}{lem:AugmentORSeparate}:=\rho^{(\ell)}$, $\ \epsilon_\PARAMETERPASSING{L}{lem:AugmentORSeparate}:=\epsilon^{(\ell+1)}<\beta\leq \rho^{(\ell +1)}=\alpha_\PARAMETERPASSING{L}{lem:AugmentORSeparate} $, 
$\ \gamma_\PARAMETERPASSING{L}{lem:AugmentORSeparate}:=\gamma\Pi^{(\ell)}$ to the  graph
$G$ with the $(\rho^{(\ell)} k,\rho^{(\ell)})$-dense cover $\DenseSpots$, the 
$( \epsilon^{(\ell)},  \rho^{(\ell)},\Pi^{(\ell)}c)$-\semiregular matching $\mathcal M^{(\ell)}$, the set 
$$\widetilde{Y}:=(Y\cup V_2(\mathcal M))\setminus V_2(\mathcal M^{(\ell)}),$$
and the $(\Pi^{(\ell)}c)$-ensemble
$$\widetilde{\mathcal C}:=  \left\{ C\setminus
V(\mathcal M^{(\ell)}) \ : \ C\in \V_1(\mathcal M)\cup \mathcal C,\  |C\setminus
V_1(\mathcal M^{(\ell)})|\ge
\Pi^{(\ell)}c\right\}.$$ 

Lemma~\ref{lem:AugmentORSeparate} yields a \semiregular matching  which either corresponds to $\mathcal M''_\PARAMETERPASSING{L}{lem:AugmentORSeparate}$ as in Assertion~\eqref{it:AugmAss1} or to $\mathcal M'_\PARAMETERPASSING{L}{lem:AugmentORSeparate}$ as in Assertion~\eqref{it:AugmAss2}.
Note that in the latter case, the matching $\mathcal M'_\PARAMETERPASSING{L}{lem:AugmentORSeparate}$ actually constitutes an $( \epsilon^{(\ell+1)},  \rho^{(\ell+1)},\Pi^{(\ell+1)}c)$-\semiregular matching $\mathcal M^{(\ell+1)}$ fulfilling all the above properties for $\ell+1\leq L$. In fact,~\eqref{eq:NatImpAl} and~\eqref{epsipepsi} hold for $\M^{(\ell+1)}$ because of $\mathbf{(C1)}$, and it is not difficult to deduce~\eqref{dontworryaboutMell} from $\mathbf{(C2)}$ and from~\eqref{dontworryaboutMell} for $\ell$. But this contradicts the choice of $\ell$. We conclude that we obtained a \semiregular matching $\mathcal M''_\PARAMETERPASSING{L}{lem:AugmentORSeparate}\subseteq \mathcal M^{(\ell)}$ as in Assertion~\eqref{it:AugmAss1} of Lemma~\ref{lem:AugmentORSeparate}.

Thus, in other words, $\mathcal M^{(\ell)}$ can be partitioned into $\mathcal M_1$ and $\mathcal M_2$
so that
\begin{equation}\label{eq:LifeIsNice}
e\Big(\bigcup \widetilde\C\cup V_1(\mathcal M_2)\ , \ \widetilde Y\cup
V_2(\mathcal M_1) \Big)\ <\ \tau_\PARAMETERPASSING{L}{lem:AugmentORSeparate} kn\ =\ \rho kn/2.
\end{equation}
 Set $\mathcal M':=\mathcal M^{(\ell)}$. Then $\mathcal M'$ is $(\epsilon,\beta,\pi c)$-\semiregular by~\eqref{MellowYellow}.
Note that Assertion~\eqref{it:Sep1} of the lemma holds by~\eqref{summederepsilons} and by~\eqref{epsipepsi}.  Assertion~\eqref{it:Sep2}  holds because of~\eqref{dontworryaboutMell}. 

Since
$$(Y\cup V_2(\mathcal M)) \setminus V_2(\mathcal M_2) \ \subseteq \ \widetilde Y\cup V_2(\mathcal M_1),$$
and because of~\eqref{eq:LifeIsNice} we know that in order to prove Assertion~\eqref{it:Sep3} it suffices to show that
\begin{align*}
X\ := \ & 
\big((\bigcup \mathcal C\cup V_1(\mathcal M )) \setminus   V_1(\mathcal M_1) \big) \setminus\big(\bigcup\widetilde\C\cup V_1(\mathcal M_2)\big)
\\
\ = \ &
\big(\bigcup \mathcal C\cup V_1(\mathcal M)\big) \setminus\big(\bigcup\widetilde\C\cup V_1(\mathcal M^{(\ell)})\big)
\end{align*}
 sends at most $ \rho kn/2$ edges to the rest of the graph. 
For this, it would be enough to see that $|X| \leq \frac\rho{2\Omega}n$, since by assumption, $G$ has maximum degree $\Omega k$.

To this end, note that
by assumption, $| \V_1(\mathcal M)\cup \mathcal C|\le \frac{n}{c}$. 
Further,
the definition of $\widetilde\C$ implies
that for each $A\in \mathcal C\cup \V_1(\mathcal M)$ we have 
that 
$|A\setminus \big(\bigcup\widetilde \C\cup V_1(\mathcal
M^{(\ell)}\big)|\le \Pi^{(\ell)}c$.
Combining these two observations, we obtain that
$$|X| 
< \Pi^{(\ell)}n
\le \frac \rho{2\Omega}n\;,$$ 
as desired.

\end{proof}

\section{Rough structure of LKS graphs}\label{sec:LKSStructure}
In this section we give a structural result for graphs
$G\in\LKSsmallgraphs{n}{k}{\eta}$, stated in Lemma~\ref{prop:LKSstruct}. Similar structural results were essential
also for proving Conjecture~\ref{conj:LKS} in the dense setting
in~\cite{AKS95,PS07+}. There, a certain matching structure was proved to exist
in the cluster graph of the host graph. This matching structure then allowed us to
embed a given tree into the host graph. We motivate the structure asserted by Lemma~\ref{prop:LKSstruct} in more detail in Section~\ref{ssec:motivation}.

Naturally, in our possibly sparse setting the sparse decomposition $\class$ of $G$ will enter the picture (instead of just the cluster graph of $G$. For more on sparse decomposition, see~\cite{cite:LKS-cut0}). 
There is an important subtlety though: we may need to ``re-regularize'' the cluster graph $\BGblack$ of $\class$.
In this case, we have to find another
regularization of parts of $G$, partially based on $\Gblack$. Lemma~\ref{lem:Separate} is the main tool to this end. The re-regularization is captured by the \semiregular matchings $\M_A$ and $\M_B$.

Let us note that this step is one
of the biggest differences between our approach and the announced solution of
the Erd\H os--S\'os Conjecture by Ajtai, Koml\'os, Simonovits and Szemer\'edi. In
other words, the nature of the graphs arising in the Erd\H os--S\'os Conjecture
allows a less careful approach with respect to regularization, still yielding a
structure suitable for embedding trees. We discuss the necessity of this step in further detail
in Section~\ref{ssec:whyaugment}. The main result of this paper Lemma~\ref{prop:LKSstruct}, is given in Section~\ref{sec6first}.

\subsection{Motivation for and intuition behind Lemma~\ref{prop:LKSstruct}}
\label{ssec:motivation}
Recall that \cite[Lemma~\ref{p0.lem:LKSsparseClass}]{cite:LKS-cut0} asserts that each graph $G=G_\PARAMETERPASSING{T}{thm:main}$ satisfying the conditions of Theorem~\ref{thm:main} has a sparse decomposition which captures almost all its edges. With this pre-processing at hand, we want Lemma~\ref{prop:LKSstruct} to provide specific structural properties of $G$  under which we could make the embedding of the tree $T=T_\PARAMETERPASSING{T}{thm:main}$ work. The complexity of these assertions (which span more than half a page) stems from the complicated nature of the sparse decomposition, and from the delicate features of the embedding techniques (worked out in \cite[Section~\ref{p3.sec:embed}]{cite:LKS-cut3}). 
In this section we try to explain and motivate the key assertions of Lemma~\ref{prop:LKSstruct}. The reader may skip the section at his or her convenience. The only bit from this section needed for the main result is Definition~\ref{def:XAXBXC}.

At this stage, let us introduce informally the notion of fine partition which we use to cut up the tree $T$. Let $\tau\ll 1$. We find a constant number of \emph{cut vertices} of $T$ so that the components (which we refer to as \emph{shrubs}) in the remainder of $T$ are of order at most $\tau k$. The cut vertices will decompose into two sets $W_A$ and $W_B$ so that the distance from any vertex of $W_A$ to any vertex in $W_B$ is odd. It can be shown that we can do the cutting so that each shrub either neighbours only one cut vertex from $W_A\cup W_B$, or it neighbours two, in which case both these cut vertices are in $W_A$. Thus, the set of all shrubs can be decomposed as $\shrubA\dcup\shrubB$ depending on the cut vertices that surround individual shrubs. The last property of the fine partition we shall use is that
\begin{equation}
\label{eq:shrubswitch}
\sum_{t\in \shrubA}v(t)\ge \sum_{t\in \shrubB}v(t)\;.
\end{equation}
The quadruple 
$(W_A,W_B,\shrubA,\shrubB)$ is then called a \emph{$(\tau k)$-fine partition} of $T$. The full definition which includes several additional properties is given in \cite[Section~\ref{p3.ssec:cut}]{cite:LKS-cut3}.

As said earlier, Lemma~\ref{prop:LKSstruct} is an extensive generalization of previous structural results on the LKS Conjecture in the dense setting. So, as a starting point for our motivation, let us explain the structural result Piguet and Stein~\cite{PS07+} use to prove the dense approximate version of the LKS Conjecture.
\begin{theorem}[\cite{PS07+}]\label{thm:PigSte}
For every $\eta>0$ and $q>0$ there exists a number $n_0$ such that for every $n>n_0$ and $k>qn$ we have the following. For every graph $G\in\LKSgraphs{n}{k}{\eta}$ contains each tree from $\treeclass{k}$.
\end{theorem}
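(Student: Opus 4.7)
The plan is to follow the classical three-step regularity-method approach that is outlined in the excerpt just before Lemma~\ref{prop:LKSstruct}: \emph{(a)} regularize, \emph{(b)} find a combinatorial skeleton in the cluster graph, \emph{(c)} embed the tree into the skeleton. Because $k > qn$, the hypothesis $G \in \LKSgraphs{n}{k}{\eta}$ forces $G$ to be dense, so Lemma~\ref{lem:RL} is directly applicable.

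First I would apply Lemma~\ref{lem:RL} to $G$ with parameters $\epsilon \ll \eta, q$ and prepartition into $\smallvertices{\eta}{k}{G}$ and $\largevertices{\eta}{k}{G}$, yielding a cluster graph $R$ whose vertices inherit a ``large'' or ``small'' label. Standard counting shows that $R$ satisfies an averaged version of the degree condition: at least $(\tfrac12 + \eta - o(1))v(R)$ clusters are labelled large, and each large cluster $C$ has enough mass in $R$-neighbours (i.e., clusters $C'$ with $(C,C')$ being an $\eps$-regular pair of density $\ge d$) to account for roughly $(1+\eta)k$ degree in $G$, after discarding the exceptional set, irregular pairs, and sparse pairs.

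Second, inside $R$ I would extract a matching $\mathcal M$ whose $\M$-edges go between large clusters and arbitrary clusters, together with a partition of $\V(\M)$ into two parts $\V_A$ and $\V_B$, with $\V_A$ consisting entirely of large clusters, and such that almost no edge of $R$ (and hence very few edges of $G$) leaves $V(\M)\cup \V_A$-to-small-side, except via $\M$-edges themselves. Existence of such a matching follows from a Gallai/K\"onig-type argument combined with the ``half of the vertices are large'' condition and the fact that a minimum LKS-type graph cannot have two adjacent small vertices; this plays the role in the dense setting that Lemma~\ref{lem:Separate} plays for us in the sparse setting. A short calculation using the degree condition shows that the total ``large-side capacity'' $\sum_{(A,B)\in\M, A\in \V_A}|A|$ plus the capacity on the other side exceeds $(1+\tfrac{\eta}{2})k$.

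Third, I would take a $(\tau k)$-fine partition $(W_A,W_B,\shrubA,\shrubB)$ of $T$ for some $\tau \ll \eta$ and embed $T$ greedily. The cut vertices in $W_A$ are embedded into clusters of $\V_A$ (the ``large'' side of $\M$), the cut vertices in $W_B$ into clusters of $\V_B$, using that consecutive cut vertices are at odd distance so that successive embeddings alternate sides of $\M$-edges. Each shrub is then embedded into an $\M$-edge regular pair attached to its surrounding cut vertex/vertices, using Fact~\ref{fact:BigSubpairsInRegularPairs} and the standard greedy embedding of a bounded-degree (in fact tree) into a regular pair whose sides have room. The inequality~\eqref{eq:shrubswitch} ensures that the shrubs in $\shrubA$, which consume space on both sides of $\M$-edges, and the shrubs in $\shrubB$, which consume space only on one side, jointly fit within the matching's capacity.

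The main obstacle is step two: producing the right matching $\mathcal M$ and side-assignment $\V_A,\V_B$ so that steps one and three fit together. One has to be sure that the defects (exceptional vertices, irregular or sparse pairs, clusters uncovered by $\mathcal M$) leak at most $o(k)n$ edges, and simultaneously that whenever an $\M$-edge lies on the ``wrong'' side with respect to where a given cut vertex wants to be placed, an augmenting swap is available. In the dense setting this can be arranged by a direct, finite-sized alternating-path argument applied to $R$; the sparse analogue requires the full technology of Section~\ref{sec:augmenting} culminating in Lemma~\ref{lem:Separate}.
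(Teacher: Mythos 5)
The theorem you are asked to prove is not actually proved in this paper at all: it is cited verbatim from \cite{PS07+}. What the paper does supply is a high-level summary of the Piguet--Stein proof in Section~\ref{ssec:motivation}, centred on Informal Lemma~\ref{lem:StructPigSt}. Comparing your sketch against that summary reveals a genuine structural gap, not just a difference in presentation.

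Your step two proposes to embed the cut vertices $W_A$ into many clusters spread over one side $\V_A$ of a matching $\M$, and $W_B$ into clusters spread over the other side $\V_B$, ``alternating sides of $\M$-edges''. This cannot work as stated, because two consecutive cut vertices of $T$ (vertices of $W_A\cup W_B$ that are joined through a shrub) would then generally land in \emph{different} $\M$-edges, and a matching by itself provides no edges between distinct $\M$-edges to realise the connecting path. The actual Piguet--Stein structure avoids this by anchoring all of $W_A$ in a \emph{single} cluster $A_1$ (respectively $A$) and all of $W_B$ in a single cluster $A_2$ (respectively $B$), where $A_1A_2$ (resp. $AB$) is one regular edge, and then using the degree condition $\deg_{\BGblack}(A_i,\BL\cup V(M))\geq K$ to reach the rest of the matching and the large clusters for the shrubs. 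That is the content of the dichotomy $\mathbf{(H1)}$/$\mathbf{(H2)}$ in Informal Lemma~\ref{lem:StructPigSt}, which your sketch omits. You also omit the compatibility condition~\eqref{eq:additional} (that $\neighbour_{\BGblack}(A)$ meets each $M$-edge in at most one endpoint), which the paper devotes Figure~\ref{fig:why52} and a whole paragraph to explaining and without which the $\mathbf{(H2)}$ embedding fails. Finally, your appeal to a ``Gallai/K\"onig-type argument'' is roughly aligned with the Gallai--Edmonds theorem actually used (the paper invokes Theorem~\ref{thm:GallaiEdmonds} in the sparse analogue Lemma~\ref{prop:LKSstruct}), so that part is on the right track; the gap is in which object the matching argument needs to produce, and how the cut vertices are anchored.
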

Here, of course, the structure we work with is encoded in the cluster graph (in the sense of the original regularity lemma) $\BGblack$ of the graph $G_\PARAMETERPASSING{T}{thm:PigSte}$. Note that $\BGblack\in \LKSgraphs{N}{K}{\eta/2}$, where $N$ is the number of clusters and $K=k\cdot\frac Nn$. The main structural result of Piguet and Stein then reads as follows.
\begin{informallemma}[{\cite[Lemma~8]{PS07+}}, simplified]\label{lem:StructPigSt}
Suppose that $\BGblack\in \LKSgraphs{N}{K}{\alpha}$ and let us write $\BL=\largevertices{K}{\alpha}{\BGblack}$. Then we have at least one of the following two cases.
\begin{enumerate}
	\item[$\mathbf{(H1)}$] There exists a matching $M\subset \BGblack$ and an edge $A_1A_2$ so that  $\deg_{\BGblack}(A_i, \BL\cup V(M))\ge K$, for $i=1,2$.
	\item[$\mathbf{(H2)}$] There exists a matching $M\subset \BGblack$ and an edge $AB$ with $\deg_{\BGblack}(A, \BL\cup V(M))\ge K$, and  $\deg_{\BGblack}(B,\BL\cup V(M))\ge \frac K2$. Further, 
	\begin{equation}\label{eq:additional}
	\text{for every $e\in M$,}\quad
	|\neighbour_{\BGblack}(A)\cap e|\le 1\;.
	\end{equation}
\end{enumerate}
\end{informallemma}
Piguet and Stein use structures~\textbf{(H1)} and~\textbf{(H2)} to embed any
given tree $T\in\treeclass{k}$ into $G$ using the regularity method. A comprehensive description of the embedding procedure is given in
Sections~3.6 and~3.7 in~\cite{PS07+}. The embedding itself is quite technical but it follows a relatively pedestrian strategy which we present next. The regularity method tells us that a regular pair can be filled up by an arbitrary family of shrubs, provided that the colour classes of these shrubs (viewed as one bipartite graph) do not overfill the end-clusters of that regular pair. The degree conditions in Informal Lemma~\ref{lem:StructPigSt} suggest that we will utilize the clusters of $M$ and of $\BL$. More precisely, some of the shrubs will be accommodated in the edges of the matching $M$. Suppose next that we would like to proceed with embedding some shrubs using a cluster $X\in\BL$. This can be done as follows. Using the high-degree property of $X$ we can find a cluster $Y$ adjacent to $X$ that is not filled up completely by the image of $T$. We then use the pair $XY$ to accommodate further shrubs.
We keep embedding $T$ by mapping $W_A$ to $A_1$ (in $\mathbf{(H1)}$) or to $A$ (in $\mathbf{(H2)}$), $W_B$ to $A_2$ or to $B$, and the shrubs pendent from these cut-vertices either into the regular edges of $M$, or to edges incident to clusters $\BL$ as described above. Thus, the degree conditions in Informal Lemma~\ref{lem:StructPigSt} guarantee that we can accommodate shrubs of total order up to $k$ from $A_1$, $A_2$, and $A$. The degree bound for $B$ guarantees that we can embed shrubs of total order up to $k/2$ from $B$, recall that this is sufficient, thanks to~\eqref{eq:shrubswitch}. The fact that $A_1A_2$ or $AB$ forms an edge allows us to make connections between images of $W_A$ and $W_B$.

\begin{figure}[t]
	\centering
   \includegraphics[scale=0.8]{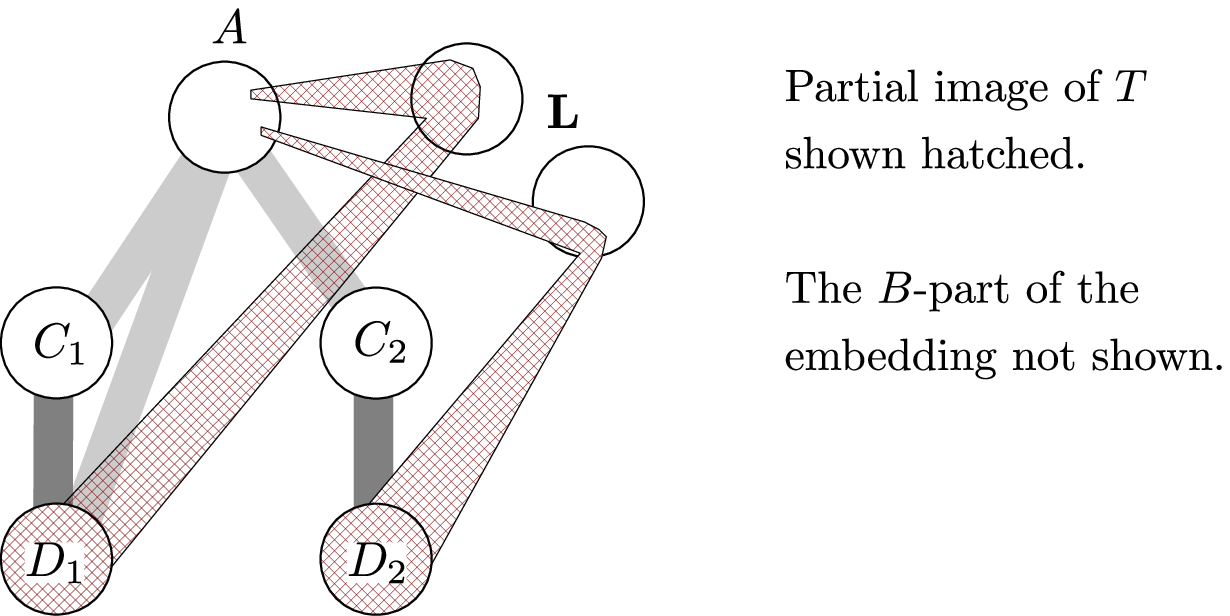}
\caption{The reason for requiring~\eqref{eq:additional} in the setting of \textbf{(H2)}. Consider two edges $C_1D_1,C_2D_2\in M$ such that only $C_2D_2$ satisfies~\eqref{eq:additional}. At some point during the embedding of~$T$, we may need to use the high-degree property of clusters in~$\mathbf{L}$. When doing so we cannot guarantee that we will fill the edges of $M$ in an efficient way. That is, we may end up filling~$D_1$ and~$D_2$ completely and leaving~$C_1$ and~$C_2$ untouched. If this happens, both regular pairs $C_1D_1$ and $C_2D_2$ are useless for embedding further shrubs. 
The used space in $C_2D_2$ equals the degree from~$A$ to $C_2D_2$. That is, we do not expect to embed anything more in the edge $C_2D_2$. The condition $\deg_{\BGblack}(A, \BL\cup V(M))\ge K$ ensures that we find free space somewhere else in the cluster graph to complete our embedding.
Clearly, the pair $C_1D_1$ does not have this favourable feature: the number of vertices used by the embedding is only half the degree of $A$ to $C_1D_1$.  In this case, the condition $\deg_{\BGblack}(A, \BL\cup V(M))\ge K$ is not strong enough.
\\
We do not need a counterpart of~\eqref{eq:additional} for $\neighbour_{\BGblack}(B)$. The reason is that we can schedule our embedding process in such a way that when we use the high-degree property of $\mathbf{L}$ we have already exhausted the degree from $B$ to $M$.}
\label{fig:why52}
\end{figure}
So far, we have not explained the role of condition~\eqref{eq:additional}. We include a relatively detailed explanation in Figure~\ref{fig:why52}. However, this condition is independent of the rest of the argument, and it may be sufficient for the reader to take granted that~\eqref{eq:additional} is crucial for the embedding to work.

\bigskip
We now try to give an analogue to Informal Lemma~\ref{lem:StructPigSt} in the sparse setting when the structure of $G$ is encoded in the sparse decomposition rather than in the cluster graph. Recall that in the dense setting sets suitable for embedding shrubs were clusters of a regular matching (that is, $M$), and clusters of large degree (that is, $\BL$). In the sparse setting, in addition to using a suitable matching of regular pairs $\mathcal M$ and large degree vertices $\largevertices{k}{\eta}{G}$ we can make use of two further sets for embedding shrubs: $V(\Gexp)$ (as explained in \cite[Section~\ref{p0.sssec:whyGexp}]{cite:LKS-cut0}) and the set $\smallatoms$ (as explained in \cite[Section~\ref{p0.sssec:whyavoiding}]{cite:LKS-cut0}). Thus, the counterpart of clusters $A_1$, $A_2$ and $A$ from Informal Lemma~\ref{lem:StructPigSt} is the set $\XA$ of vertices, which have degree at least $k$ into the set $\largevertices{k}{\alpha}{G}\cup V(\mathcal M)\cup V(\Gexp)\cup \smallatoms$.\footnote{The rather different looking formal definition of $\XA$ is given in~\eqref{eq:XAXBXC}. Below, we give an explanation for this difference.} Likewise, the counterpart of cluster $B$ in Informal Lemma~\ref{lem:StructPigSt} are vertices of $\XB$, which have degree at least $k/2$ into $\largevertices{k}{\alpha}{G}\cup V(\mathcal M)\cup V(\Gexp)\cup \smallatoms$.\footnote{The formal definition of $\XB$ is given again in~\eqref{eq:XAXBXC}.} We see that a sparse counterpart to $\mathbf{(H1)}$ would be two disjoint well-connected sets $\XA_1,\XA_2\subset \XA$. In Lemma~\ref{prop:LKSstruct} we achieve this in one of two possible ways. One way is finding a large \semiregular matching $\Mgood$ inside $\XA$; one can then take $\XA_1=V_1(\mathcal \Mgood)$ and $\XA_2=V_2(\mathcal \Mgood)$. This corresponds to $\mathbf{(K2)}$ in Lemma~\ref{prop:LKSstruct}\eqref{Mgoodisblack}. Next, suppose that $\XA$ induces sufficiently many edges. Then we take $\XA_1$ and $\XA_2$ to be a bipartition of $\XA$ corresponding to a maximum cut. Hence, the sets $\XA_1$ and $\XA_2$ are again well-connected. 
This corresponds to the case $e(\XA)\ge \eta kn/12$ in $\mathbf{(K1)}$ in Lemma~\ref{prop:LKSstruct}\eqref{Mgoodisblack}.
Similarly, if the sets $\XA$ and $\XB$ are well-connected, we are on a good track to getting a  sparse version of $\mathbf{(H2)}$. 

It remains to translate  condition~\eqref{eq:additional}. The right counterpart to this condition is
\begin{equation}\label{eq:additionalsparse}
\text{for every $XY\in \mathcal M$,}\quad 
\neighbour_{\Gcapt}(\XA)\cap X=\emptyset
\quad
\text{or}
\quad
\neighbour_{\Gcapt}(\XA)\cap Y=\emptyset\;.
\end{equation}

The actual statement of Lemma~\ref{prop:LKSstruct} deviates quite substantially from the informal account given above. So, let us now state an informal version of Lemma~\ref{prop:LKSstruct}. After that, we explain how it relates to the description above. Also, we mark the correspondence between this informal version and the actual lemma by using the same numbering. In particular, assertions \eqref{eq:M1}, \eqref{newpropertyS6}, \eqref{nicDoNAtom} in Lemma~\ref{prop:LKSstruct} are needed for reasons that cannot be explained in this high-level overview and are not reflected in the informal version. Further, statement of (\ref{eq:Mspots}') of our informal lemma carries only half of the information compared to the full version in Lemma~\ref{prop:LKSstruct}.

Let us now give the actual definitions of the sets $\XA$, $\XB$. Later, we explain how these definitions imply the features described above. 
\begin{definition}\label{def:XAXBXC}
Suppose that $k\in \NN$, $\gamma,\eta,\epsilon,\epsilon',\nu,\rho>0$ and $\Lambda,\Omega^{*},\Omega^{**}>0$.
Suppose that $G$ is a graph with a $(k,\Omega^{**},\Omega^*,\Lambda,\gamma,\epsilon,\nu,\rho)$-sparse decomposition 
$$\class=(\HugeVertices, \clusters,\DenseSpots, \Gblack, \Gexp,\smallatoms
)\;$$ with respect to $\largevertices{\eta}{k}{G}$ and
$\smallvertices{\eta}{k}{G}$.
Suppose further that  $\mathcal M_A,\mathcal M_B$ are
\semiregular matchings  in~$G$. We then define the triple \index{mathsymbols}{*XA@$\XA(\eta,\class, \mathcal M_A,\mathcal M_B)$} \index{mathsymbols}{*XB@$\XB(\eta,\class, \mathcal M_A,\mathcal M_B)$}
\index{mathsymbols}{*XC@$\XC(\eta,\class, \mathcal M_A,\mathcal
	M_B)$}
$(\XA,\XB,\XC)=(\XA,\XB,\XC)(\eta,\class, \mathcal M_A,\mathcal M_B)$
by setting
\begin{align}
\begin{split}\label{eq:XAXBXC}
\XA&:=\largevertices{\eta}{k}{G}\setminus V(\mathcal M_B)\;,\\
\XB&:=\left\{v\in V(\mathcal M_B)\cap \largevertices{\eta}{k}{G}\::\:\widehat{\deg}(v)<(1+\eta)\frac
k2\right\}\;,\\
\XC&:=\largevertices{\eta}{k}{G}\setminus(\XA\cup\XB)\;,
\end{split}
\end{align}
where 
$\widehat{\deg}(v)$ on the second
line is defined by
\begin{equation}\label{eq:defhatdeg}
\widehat{\deg}(v):=\deg_G\big(v,\smallvertices{\eta}{k}{G}\setminus
(V(\Gexp)\cup\smallatoms\cup V(\mathcal M_A\cup\mathcal M_B)\big)\;.
\end{equation}
\end{definition}
It is enough to restrict ourselves for the proof to the class $\LKSsmallgraphs{n}{k}{\eta}\subset \LKSgraphs{n}{k}{\eta}$. We intentionally leave out (or simplify) almost all numerical parameters in this informal statement.
\theoremstyle{theorem}
\newtheorem*{informalLKSStruc}{Informal version of Lemma~\ref{prop:LKSstruct}}
\begin{informalLKSStruc}
Suppose $\class=(\HugeVertices, \clusters,\DenseSpots, \Gblack, \Gexp,\smallatoms )$
is a sparse
decomposition of a graph $G\in\LKSsmallgraphs{n}{k}{\eta}$. We write $S^0:=\smallvertices{\eta}{k}{G}\setminus (V(\Gexp)\cup\smallatoms)$. Then there exist \semiregular matchings $\M_A$ and $\M_B$, such that for the sets $\XA$ and $\XB$ defined as in Definition~\ref{def:XAXBXC} we have
\begin{enumerate}
  \item[\eqref{prop6.1a}]
  $V(\mathcal M_A)\cap V(\mathcal
  M_B)=\emptyset$,
  \item[\eqref{eq:lastminute}] $V_1(\M_B)\subset S^0$,
  \item[$(\mathrm{\ref{eq:Mspots}}')$] for each $X\in \V(M_A)\cup\V(\M_B)$ we have that $X\subset \smallvertices{\eta}{k}{G}$ or
  $X\subset \largevertices{\eta}{k}{G}$,
  \item[\eqref{fewfewfew}] $e\big(\XA,S^0\setminus V(\mathcal M_A)\big)=0$,
\item[\eqref{Mgoodisblack}] if $\XA$ induces almost no edges and does not contain a substantial \semiregular matching\footnote{The exact quantification of ``almost no edges'' and ``substantial \semiregular matching'' does not in guarantee the former property to imply the latter. See also Remark~\ref{rem:K1vsK2}} then there is a substantial amount of edges between $\XA$ and $\XB$.
\end{enumerate}
\end{informalLKSStruc}
The \semiregular matching $\M_A\cup \M_B$ from the lemma plays the role of $\M$ in the motivation above.
It remains to justify the dissimilarities between the statement of the lemma and the text above. The first discrepancy is that the definitions of the sets $\XA$ and $\XB$ in~\eqref{eq:XAXBXC} are quite different from the ones in the motivation above. The other discrepancy is a seeming absence of~\eqref{eq:additionalsparse} in the statement. As for the first issue, consider an arbitrary vertex $v\in \XA$. Property~\eqref{fewfewfew} tells us that $v$ sends no edges to $S^0\setminus V(\M_A)\supset S^0\setminus (V(\M_A)\cup V(\M_B))$. As $v\in\largevertices{\eta}{k}{G}$, we have that $\deg(v,\largevertices{\eta}{k}{G}\cup V(\M_A)\cup V(\M_B)\cup V(\Gexp)\cup\smallatoms)\ge (1+\eta)k$, as needed. Next, consider a vertex $v\in \XB$. The fact that $v\in \largevertices{\eta}{k}{G}$ together with the definition of $\widehat{\deg}$ immediately gives that $\deg(v,\largevertices{\eta}{k}{G}\cup V(\M_A)\cup V(\M_B)\cup V(\Gexp)\cup\smallatoms)> (1+\eta)\frac k2$, again as needed.

Let us now turn to deriving~\eqref{eq:additionalsparse}. This property is required only for the counterpart of $\mathbf{(H2)}$. So, we can assume that we do not have the counterpart of $\mathbf{(H1)}$, that is, the set $\XA$ induces (almost) no edges. Let us now consider an arbitrary regular pair $(X,Y)$ in $\M_A\cup \M_B$. First assume that $(X,Y)\in\M_B$. Then~\eqref{eq:lastminute} tells us that $X\subset S^0$. We then have $\neighbour(\XA)\cap X=\emptyset$ by Property~\eqref{fewfewfew}, as needed for~\eqref{eq:additionalsparse}. Next, assume that $(X,Y)\in\M_A$. Then  Definition~\ref{def:LKSsmall}\eqref{en:LKSsmall.noSS} (together with (\ref{eq:Mspots}') of our informal lemma) tells us that at least one of  $X$ and $Y$ is contained in $\largevertices{\eta}{k}{G}$. Say this is $X$. We then have $X\subset \XA$. But the absence of edges inside $\XA$ tells us that $e(X,\XA)=0$, again as needed for~\eqref{eq:additionalsparse}.

\subsubsection{Rough versus fine structure}\label{sssec:roughversusfine}
In the dense case~\cite{PS07+} we can proceed with embedding $T$ using the regularity method immediately after having established a statement like Informal Lemma~\ref{lem:StructPigSt}. That is, we can zigzag-embed consecutive cut vertices $W_A\cup W_B$ of $T$ in $AB$, or $A_1A_2$. When we arrive at a shrub $t\in\shrubA\cup\shrubB$ stemming from cut vertex $u\in W_A\cup W_B$ embedded to a cluster $D$ (that is, $D=A$, $D=B$, $D=A_1$, or $D=A_2$) we can use  $\mathbf{(H1)}$ or $\mathbf{(H2)}$ to find an edge $XY\in E(\BGblack)$ such that $DX\in E(\BGblack)$ and the pair $(X,Y)$ has not been filled-up. Then, \emph{(i)} using that $DX\in E(\BGblack)$ we traverse from $D$ to $XY$, \emph{(ii)} we embed $t$ in $(X,Y)$, and \emph{(iii)} if $t$ is an internal shrub, we again use that $XD\in E(\BGblack)$ to traverse back\footnote{As said at the beginning of Section~\ref{ssec:motivation}, if $t$ is internal, then both of its neighboring cut vertices are in $W_A$. In particular, the distance between these two cut vertices is even. That means that to traverse back to $D$, we really use the pair $XD\in E(\BGblack)$.} to $D$ and continue embedding cut vertices in $AB$ or $A_1A_2$.

In the current setting of the sparse decomposition, the structure given by Lemma~\ref{prop:LKSstruct} would allow us to carry out counterparts to \emph{(i)} and \emph{(ii)} (even though there is a number of technical obstacles). That is, we would be able to embed consecutive cut vertices, to traverse to locations suitable for shrubs and to embed these shrubs. However, carrying a counterpart to \emph{(iii)} is a major problem. The symmetry-based argument from the dense case ``if $DX$ is an edge then $XD$ is an edge and thus we can traverse in both directions'' does not work 
when the shrub is not to be embedded in a cluster, but in a subset of $\smallatoms$ or $\Gexp$.
This is going to be addressed in~\cite{cite:LKS-cut2}, where we clean the rough structure in such a way that it will allow a counterpart to \emph{(iii)}.

\subsection{The role of Lemma~\ref{lem:Separate} in the proof of
	Lemma~\ref{prop:LKSstruct}}
\label{ssec:whyaugment}
In this section, we explain the role of Lemma~\ref{lem:Separate} in our proof of
Lemma~\ref{prop:LKSstruct}. That is, we want to explain why it is not possible in general to find regular matchings $\M_A$ and $\M_B$ from the informal version of Lemma~\ref{prop:LKSstruct} inside the cluster graph $\BGblack$. Because of this we will have to find a suitable ``re-regularization'' which turns out to be provided by Lemma~\ref{lem:Separate}.

Recall the motivation from Section~\ref{ssec:motivation}. We wish to find two sets $\XA$ and $\XB$ (or two sets within $\XA$) which are suitable for
embedding the cut vertices $W_A$ and $W_B$ of a $(\tau k)$-fine partition
$(W_A,W_B,\shrubA,\shrubB)$ of $T$.
In this sketch we just focus on finding $\XA$; the ideas behind finding
a suitable set $\XB$ are similar. To accommodate all the shrubs from $\shrubA$ --- which might contain up to $k-2$
vertices in total --- we need $\XA$ to have total degree at least $k$ into  the sets $\largevertices{\eta}{k}{G}$, $V(\Gexp)$, $\smallatoms$, together with vertices of any fixed matching $\M$
consisting of regular pairs. 
This motivates us to look for a
\semiregular matching~$\M$ which covers as much as possible of the set
$S^0:=\smallvertices{\eta}{k}{G}\setminus
\left(V(\Gexp)\cup\smallatoms\right)$. as these are the vertices that are not
utilizable otherwise.
As a next step one would prove
that there is a set $\XA$ with $$\mindeg\left(\XA,V(G)\setminus (S^0\setminus
V(\M))\right)\gtrsim k\;.$$ 
(By $\gtrsim k$ we mean larger than $k$ by a suitable small additional approximation factor.)

In the dense setting~\cite{PS07+}, where the
structure of $G$ is determined by $\BGblack$, and
where $S^0=\smallvertices{\eta}{k}{G}$, such a matching $\M$ can be found inside
$\BGblack$ using the Gallai--Edmonds Matching Theorem. But here, just working
with $\BGblack$ is not enough for finding a suitable \semiregular matching as the
following example shows.

\begin{figure}[ht] \centering \includegraphics[scale=0.8]{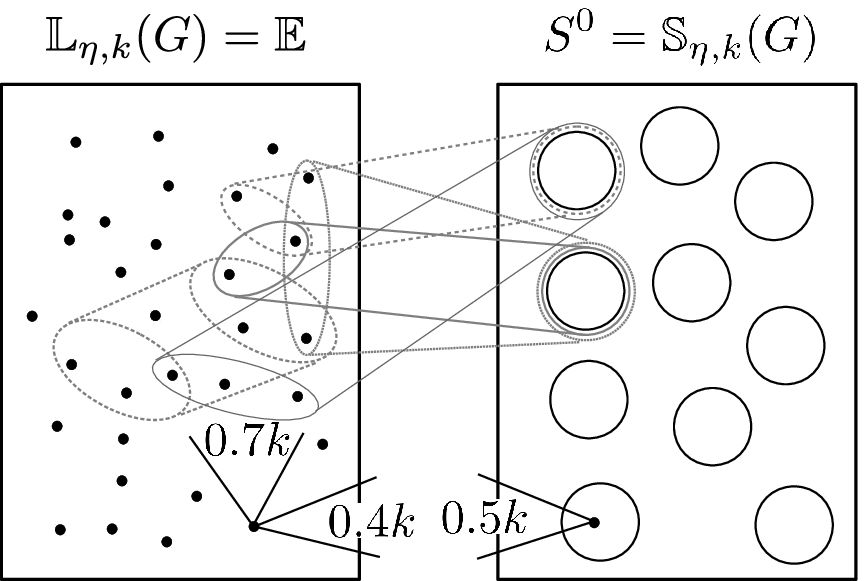}
	\caption[Example of graph with $\BGblack$ empty]{An example of a graph $G\in\LKSgraphs{n}{k}{\eta:=\frac{1}{10}}$ in which $\BGblack$ is empty, yet there is no candidate set for $\XA$ of vertices which have degrees at least $k$ outside the set~$S^0$. Sample dense spots are shown in grey.}
	\label{fig:whyEnhancing}
\end{figure}
Figure~\ref{fig:whyEnhancing} shows a graph $G$ with
$\largevertices{\eta}{k}{G}= \smallatoms$. Let us describe the construction of such a graph~$G$. We partition the vertex sets into to-be sets $\smallvertices{\eta}{k}{G}$ and $\largevertices{\eta}{k}{G}$. We further gather vertices of $\smallvertices{\eta}{k}{G}$ into clusters. We now insert edges into $G$. All the edges inserted will be in the form of dense spots. These dense spots have either both parts in $\largevertices{\eta}{k}{G}$, or one part $\largevertices{\eta}{k}{G}$ and the other in $\smallvertices{\eta}{k}{G}$. We do this so that each inserted dense spot in the  $\smallvertices{\eta}{k}{G}$-part respects the cluster structure, while it behaves in a random-like way in the $\largevertices{\eta}{k}{G}$-part.  Further, we require that each $\largevertices{\eta}{k}{G}$-vertex sends $0.7k$ edges to
$\largevertices{\eta}{k}{G}$ and $0.4k$ edges to $\smallvertices{\eta}{k}{G}$,
and each $\smallvertices{\eta}{k}{G}$-vertex receives $0.5k$ edges
from $\largevertices{\eta}{k}{G}$. Clearly, such a construction is possible. 

The point of the construction is that $\smallatoms=\largevertices{\eta}{k}{G}$, and that
$S^0=\smallvertices{\eta}{k}{G}$ form clusters which do not induce any dense
regular pairs.  No vertex has degree $\gtrsim k$ outside $S^0$, and the
cluster graph~$\BGblack$ contains no matching.

However, in this situation we can still find a large \semiregular matching $\M$ between $\largevertices{\eta}{k}{G}$ and $\smallvertices{\eta}{k}{G}$, by regularizing the crossing dense spots $\DenseSpots$ (which we can assume to be the original dense spots inserted in our construction).
In general, obtaining a \semiregular matching is, of course, more complicated. Given the above example, one may ask whether the graph $\BGblack$ has any role at all. The answer is that for constructing $\M$, we can either use directly the edges of $\BGblack$, or, if we do not have these edges the information about their lack helps us to find $\M$ elsewhere.

\subsection{Finding the structure}\label{sec6first}
We can now state the main result of this paper.
\begin{proposition}\label{prop:LKSstruct}
For every $\eta\in (0,1)$, $\Omega>0, \gamma\in (0,\eta/3)$ there is a number
$\beta>0$ so that for every $\epsilon\in(0,\frac{\gamma^2\eta}{12})$ there exist $\epsilon',\pi>0$ such that for every $\nu>0$ there exists a number $k_0\in \NN$ such that for every $\Omega^*$ with  $\Omega^*< \Omega$, every $\Omega^{**}$ with $\Omega^{**}>\max\{2,\Omega^*\}$ and every $k$ with $k> k_0$  the following holds.

Suppose $\class=(\HugeVertices, \clusters,\DenseSpots, \Gblack, \Gexp,\smallatoms )$
is a $(k,\Omega^{**},\Omega^*,\Lambda,\gamma,\epsilon',\nu,\rho)$-sparse
decomposition of a graph $G\in\LKSsmallgraphs{n}{k}{\eta}$ with respect to  $S:=\smallvertices{\eta}{k}{G}$ and
$L:=\largevertices{\eta}{k}{G}$ which captures all but at most $\eta kn/6$ edges
of $G$. Let $\clustersize$ be the size of the clusters $\clusters$.\footnote{The number $\clustersize$ is irrelevant when $\clusters=\emptyset$. In particular, note that in that case we necessarily have $\M_A=\M_B=\emptyset$ for the \semiregular matchings given by the lemma.} Write 
\begin{equation}\label{eq:S0alt}
S^0:=S\setminus \left(V(\Gexp)\cup\smallatoms\right)\;.
\end{equation}

Then $\GD$ contains  two
$(\epsilon,\beta,\pi \clustersize)$-\semiregular matchings
$\mathcal M_A$ and $\mathcal M_B$ such
that for the triple $(\XA,\XB,\XC):=(\XA,\XB,\XC)(\eta,\class, \mathcal M_A,\mathcal M_B)$
%$\XA:=\XA(\eta,\mathbf{class}, \mathcal M_A,\mathcal M_B)$,
%$\XB:=\XB(\eta,\mathbf{class}, \mathcal M_A,\mathcal M_B)$,
%$\XC:=\XC(\eta,\mathbf{class}, \mathcal M_A,\mathcal M_B)$, 
we have
\begin{enumerate}[(a)]
  \item \label{prop6.1a}
$V(\mathcal M_A)\cap V(\mathcal
M_B)=\emptyset$,
\item \label{eq:lastminute} $V_1(\M_B)\subset S^0$,
\item \label{eq:Mspots} for each $(X_1,X_2)\in\M_A\cup\M_B$, there is a dense spot  $(D_1,D_2; E_D)\in \DenseSpots$ with $X_1\subset
D_1$, $X_2\subset D_2$, and furthermore, either $X_1\subset S$ or $X_1\subset L$, and
$X_2\subset S$ or $X_2\subset L$,
\item \label{eq:M1} for each $X_1\in\V_1(\M_A\cup\M_B)$ there exists a cluster $C_1\in \clusters$ such that $X_1\subset C_1$, and for each $X_2\in\V_2(\M_A\cup\M_B)$ we have $X_2\subset L\cap \smallatoms$ or  there exists $C_2\in \clusters$ such that $X_2\subset C_2$,
\item \label{fewfewfew} $e_{\Gcapt}\big(\XA,S^0\setminus V(\mathcal M_A)\big)\le
\gamma kn$,
\item\label{newpropertyS6} $e_{\Gblack}(V(G)\setminus
V(\M_A\cup \M_B))\le \epsilon \Omega^* kn$,
\item\label{nicDoNAtom} for the \semiregular matching $\NAtom:=\{(X_1,X_2)\in\M_A\cup\M_B\::\: (X_1\cup X_2)\cap\smallatoms\not=\emptyset\}$ we have $e_{\Gblack}\big(V(G)\setminus
V(\M_A\cup \M_B),V(\NAtom)\big)\le \epsilon \Omega^* kn$,
\item\label{Mgoodisblack} for
$\Mgood:=\{(X_1,X_2)\in \mathcal M_A\::\: X_1\cup X_2\subset \XA \}$ we have that each $\Mgood$-edge is an edge of
$\BGblack$,
and  at least one of the following conditions holds 
\begin{itemize}
\item[{\bf(K1)}]$2e_G(\XA)+e_G(\XA, \XB)\ge \eta kn/3$,
\item[{\bf(K2)}]  $|V(\Mgood)|\ge \eta n/3$.
\end{itemize}
\end{enumerate}
\end{proposition}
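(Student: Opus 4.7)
The plan is to produce the two matchings $\mathcal M_A,\mathcal M_B$ by two successive applications of Lemma~\ref{lem:Separate}, with the sets $\XA,\XB,\XC$ of Definition~\ref{def:XAXBXC} pinned down in between: since $\XA = L\setminus V(\M_B)$ depends only on $\M_B$, it is natural to build $\M_B$ first, use it to determine $\XA$, and then tailor the construction of $\M_A$ so that properties~\eqref{fewfewfew}--\eqref{Mgoodisblack} can be verified. The dichotomy $\mathbf{(K1)}/\mathbf{(K2)}$ in~\eqref{Mgoodisblack} will be settled by whether or not the cluster graph $\BGblack$, restricted to clusters sitting inside $\XA$, contains a substantial matching.

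\paragraph{Construction of $\mathcal M_B$ and $\mathcal M_A$.}
For $\M_B$, I would invoke Lemma~\ref{lem:Separate} with initial matching $\mathcal M=\emptyset$, with the $\clustersize$-ensemble $\mathcal C_B := \{C\in\clusters : C\subseteq S^0\}$, with target $Y_B := L\setminus \HugeVertices$, and with dense cover $\DenseSpots$ (which absorbs the empty matching trivially and satisfies the alignment hypothesis~\eqref{eq:510ass1} by Definition~\ref{sparseclassdef}\eqref{defBC:prepartition}). The output is an $(\epsilon,\beta,\pi\clustersize)$-\semiregular matching $\mathcal M_B\subseteq \GD$ with $V_1(\M_B)\subseteq \bigcup\mathcal C_B\subseteq S^0$ (giving~\eqref{eq:lastminute}), whose pairs lie in dense spots with first side in a cluster (contributing to~\eqref{eq:Mspots} and~\eqref{eq:M1}), together with a partition $\M_B=\M_{B,1}\sqcup\M_{B,2}$ whose separation bound from Lemma~\ref{lem:Separate}\eqref{it:Sep3} will feed into~\eqref{fewfewfew}. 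Now I freeze $\XA := L\setminus V(\M_B)$, let $\clusters_{\XA}:=\{C\in\clusters : C\subseteq \XA\}$, and take a maximum matching $\mathcal M_{\Mgood}^*$ in $\BGblack[\clusters_{\XA}]$; regarded as an $(\epsilon',\rho,\clustersize)$-\semiregular matching of whole-cluster pairs, each of its edges is automatically a $\BGblack$-edge. Then I apply Lemma~\ref{lem:Separate} a second time, with initial matching $\mathcal M_{\Mgood}^*$, with ensemble $\mathcal C_A$ consisting of the clusters not used by $\M_B$ or $\mathcal M_{\Mgood}^*$, and with target $Y_A:=(L\cap\smallatoms)\cup\bigcup(\clusters\setminus V(\M_B)\setminus V(\mathcal M_{\Mgood}^*))$, obtaining $\mathcal M_A\supseteq \mathcal M_{\Mgood}^*$. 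Disjointness from $\M_B$ (property~\eqref{prop6.1a}) holds by construction; \eqref{eq:Mspots} and~\eqref{eq:M1} follow from the dense-spot/cluster structure; properties~\eqref{fewfewfew}, \eqref{newpropertyS6} and~\eqref{nicDoNAtom} follow by combining the separation guarantees of both applications with the fact that $\class$ captures all but $\eta kn/6$ edges of $G$, so that any $\Gblack$-edges left uncovered by $V(\M_A\cup\M_B)$ must be accounted for by one of the two separation bounds.

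\paragraph{The dichotomy~\eqref{Mgoodisblack} — the main obstacle.}
Setting $\Mgood:=\{(X_1,X_2)\in\mathcal M_A : X_1\cup X_2\subseteq \XA\}$, by the choice of $\mathcal C_A$ and $Y_A$ any such edge has both sides inside whole clusters of $\clusters_{\XA}$ forming a regular pair of density at least $\gamma^2$ in $\Gblack$, hence a $\BGblack$-edge; in particular $\Mgood\supseteq \mathcal M_{\Mgood}^*$. If $|V(\mathcal M_{\Mgood}^*)|\ge \eta n/3$ then $\mathbf{(K2)}$ is immediate. Otherwise suppose $|V(\mathcal M_{\Mgood}^*)|<\eta n/3$; I must deduce $\mathbf{(K1)}$. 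I would sum $\sum_{v\in\XA}\deg_G(v)\ge (1+\eta)k|\XA|$ and decompose by the partition of $V(G)$ into $\XA$, $\XB$, $\XC$, $\HugeVertices$, $V(\Gexp)\setminus L$, $\smallatoms\setminus L$, $V(\M_A)\cap S^0$, and $S^0\setminus V(\M_A)$. The contribution to $S^0\setminus V(\M_A)$ is at most $\gamma kn + \eta kn/6$ by~\eqref{fewfewfew} plus the uncaptured-edge budget. The contribution to $\HugeVertices\cup V(\Gexp)\cup\smallatoms$ is bounded by the sparse-decomposition parameters $\Omega^{**}k,\Omega^*k$, and the maximality of $\mathcal M_{\Mgood}^*$ (via Gallai--Edmonds on $\BGblack[\clusters_{\XA}]$) forces the contribution to $\XC\cup(V(\M_A)\cap S^0)$ to be small as well. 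What remains must be absorbed by $\XA\cup\XB$, and a calculation using $\gamma\ll \eta$, $\epsilon\le \gamma^2\eta/12$, and $|\XA|\ge (1/2+\eta)n-|V(\M_B)|$ yields $2e_G(\XA)+e_G(\XA,\XB)\ge \eta kn/3$, i.e.\ $\mathbf{(K1)}$. This final edge-count is the delicate step: it tightly couples the LKS degree lower bound for $L$-vertices, the two separation bounds from Lemma~\ref{lem:Separate}, the captured-edge budget, and the failure of $\mathbf{(K2)}$ translated through Gallai--Edmonds into a vertex cover of the $\BGblack$-edges inside $\XA$.
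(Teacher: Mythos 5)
The paper constructs $\mathcal M_A$ and $\mathcal M_B$ from a \emph{single} application of Lemma~\ref{lem:Separate}, preceded by a Gallai--Edmonds analysis of the full cluster graph $\BGblack$; you instead run Lemma~\ref{lem:Separate} twice, first with $\mathcal M=\emptyset$ to manufacture $\mathcal M_B$, then with a maximum matching of $\BGblack[\clusters_{\XA}]$ to manufacture $\mathcal M_A$. This is genuinely different, but it has gaps that I do not see how to close.

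The central problem is how $\mathcal M_B$ and the separation from Lemma~\ref{lem:Separate}\eqref{it:Sep3} interact with property~\eqref{fewfewfew}. In the paper, Lemma~\ref{lem:Separate} splits its output $\mathcal M'$ into $\mathcal M_1$ and $\mathcal M_B$, and then $\mathcal M_1$ is \emph{put into $\mathcal M_A$} (via $\mathcal M_A:=(\mathcal N_1\setminus\mathcal M)\cup\mathcal M_1$). In your scheme $\mathcal M_B$ is the \emph{entire} output $\mathcal M'$, with a split $\mathcal M_{B,1}\sqcup\mathcal M_{B,2}$. The separation bound then controls $e\bigl(\bigcup\mathcal C_B\setminus V_1(\mathcal M_{B,1}),\,Y_B\setminus V_2(\mathcal M_{B,2})\bigr)$. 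But for~\eqref{fewfewfew} you need to control $e_{\Gcapt}\bigl(\XA,\,S^0\setminus V(\mathcal M_A)\bigr)$, and since $\mathcal M_{B,1}\subset\mathcal M_B$ is \emph{disjoint} from $\mathcal M_A$ (property~\eqref{prop6.1a}!), the set $V_1(\mathcal M_{B,1})$ is \emph{not} subtracted from $S^0$ in~\eqref{fewfewfew}. You therefore still have to account for $e_{\Gcapt}\bigl(\XA,\,V_1(\mathcal M_{B,1})\bigr)$, which can be of order $\Omega kn$; the separation bound says nothing about it. The paper avoids this precisely by routing $\mathcal M_1$ into $\mathcal M_A$, together with Claim~\ref{cl:CrossEdges1} ($e_{\Gcapt}(L\setminus(\smallatoms\cup V(\mathcal M)),\SR)=0$), which comes from the Gallai--Edmonds alternating-path reachability set $\BSR$ on the whole of $\BGblack$. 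Applying Gallai--Edmonds only to the subgraph $\BGblack[\clusters_{\XA}]$ — and only to extract a maximum matching, not the separator/reachability structure — loses exactly the information that makes Claim~\ref{cl:CrossEdges1} and Claims~\ref{claim:DZ1}--\ref{claim:DZ5} provable.

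Two further, independent problems. First, Lemma~\ref{lem:Separate} does \emph{not} return a matching containing its input $\mathcal M$; it only guarantees $|V(\mathcal M)\setminus V(\mathcal M')|\le\epsilon n$. So your assertion $\mathcal M_A\supseteq\mathcal M_{\Mgood}^*$ (hence $\Mgood\supseteq\mathcal M_{\Mgood}^*$) is unjustified, and without it the dichotomy $|V(\mathcal M_{\Mgood}^*)|\ge\eta n/3\Rightarrow\mathbf{(K2)}$ does not follow. Second, the choice $Y_B:=L\setminus\HugeVertices$ is too large: Lemma~\ref{lem:Separate} then only places the second coordinates of $\mathcal M_B$ inside $Y_B\cap W$ for dense spots $W$, which need not be contained in a single cluster or in $L\cap\smallatoms$, so property~\eqref{eq:M1} can fail. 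The paper deliberately uses $Y:=L\cap\smallatoms$ for exactly this reason.
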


\begin{remark}\label{rem:K1vsK2}
As explained in Section~\ref{ssec:motivation}, property~\eqref{Mgoodisblack} is the most important part of Lemma~\ref{prop:LKSstruct}.
Note that the assertion {\bf(K2)} implies a
quantitatively weaker version of {\bf(K1)}. Indeed, consider $(X_1,X_2)\in \M_A$. An
average vertex $v\in X_1$ sends at least $\beta\cdot \pi\clustersize\ge \beta\cdot \pi\nu k$ 
edges to
$X_2$. Thus, if $|V(\Mgood)|\ge \eta n/3$ then
$\Mgood$ induces at least $(\eta n/6)\cdot\beta\cdot
\pi\nu k=\Theta(kn)$ edges in $\XA$. Such a bound, however, would be insufficient
for our purposes as later $\eta\gg\pi,\nu$. So, the deficit in the number of edges asserted in {\bf(K2)} (compared to the $e_G(\XA)\ge \eta kn/12$ part of {\bf(K1)}) is compensated by the fact that these edges are already regularized.
\end{remark}

For  the proof we need the well-known Gallai--Edmonds Matching Theorem, which we state next.
A graph $H$ is called \index{general}{factor-critical}{\em
factor-critical} if  $H-v$ has a
perfect matching for each $v\in V(H)$.

\begin{theorem}[Gallai--Edmonds matching theorem (see for instance~{\cite[Theorem 2.2.3]{Die05}})]
\label{thm:GallaiEdmonds} Let $H$ be a graph. Then there exist a set
$Q\subset V(H)$ and a matching $M$ of size $|Q|$ in $H$ such that
\begin{enumerate}[(1)]
\item every component of $H-Q$ is factor-critical, and
\item $M$
matches every vertex in $Q$ to a different component of $H-Q$.
\end{enumerate}
\end{theorem}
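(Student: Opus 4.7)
The plan is to follow the classical Edmonds--Gallai strategy, combined with induction on $|V(H)|$. I would start by introducing the canonical trisection: let $D$ be the set of vertices of $H$ that are missed by at least one maximum matching of $H$, let $A := \neighbour_H(D) \setminus D$, and let $C := V(H) \setminus (A \cup D)$. The aim is to take $Q := A$ enlarged by a set $Q_C \subseteq C$ obtained by induction on $H[C]$, and $M := M_A \cup M_C$, where $M_A$ is a Hall-type matching sending $A$ into distinct components of $H[D]$ and $(Q_C, M_C)$ is produced by the inductive application of the statement to $H[C]$.

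The technical heart is to prove that every component $K$ of $H[D]$ is factor-critical. For this I would prove the alternating-path swap lemma: whenever $u, w \in D$ are adjacent and $M_u$ is a maximum matching of $H$ missing $u$, one can modify $M_u$ (by following the alternating path out of $w$ in $M_u$, which cannot escape $D$ without contradicting maximality) into a maximum matching missing $w$, with the edge $uw$ now used. Iterating this swap along any path inside $K$ yields, for every $v \in K$, a maximum matching of $H$ whose unique unmatched vertex of $K$ is $v$; restricting such a matching to $K$ is a perfect matching of $K - v$, so $K$ is factor-critical.

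Next I would construct $M_A$ via Hall's marriage theorem applied to the auxiliary bipartite graph with parts $A$ and the set of components of $H[D]$, where $a \in A$ is joined to a component $K$ iff $a$ has a neighbour in $K$. A violation of Hall's condition would combine with maximum matchings that miss a vertex in each component of the neighbourhood to produce an $M^*$-augmenting alternating path, contradicting maximality. This yields a matching in $H$ saturating $A$ and sending distinct elements of $A$ to distinct components of $H[D]$.

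Finally, the restriction of any maximum matching of $H$ to $C$ is a perfect matching of $H[C]$; hence whenever $D \cup A \neq \emptyset$ we have $|V(H[C])| < |V(H)|$ and can apply the statement inductively to $H[C]$ to obtain $(Q_C, M_C)$, and assembling as above finishes the inductive step. The degenerate case $C = V(H)$ (so $H$ has a perfect matching and $D = A = \emptyset$) is handled by choosing any vertex $u$, invoking induction on $H - u$ to obtain $(Q', M')$, and setting $Q := \{u\} \cup Q'$, $M := M' \cup \{uv\}$ where $v$ is a neighbour of $u$ in a component of $H - Q$ not already saturated by $M'$; the existence of such a $v$ follows from a short alternating-path argument using the perfect matching. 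The main obstacle will be the swap lemma underlying the factor-criticality of the components of $H[D]$, whose alternating-path juggling also powers the verification of Hall's condition in the construction of $M_A$.
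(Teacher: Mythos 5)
The paper does not prove this theorem: it is cited from Diestel as Theorem~2.2.3 and used as a black box, so there is no internal proof to compare your proposal against. On its own terms, your outline is the classical Gallai--Edmonds strategy with the one adaptation that is genuinely needed for this exact formulation: the canonical separator $A$ alone is not enough, since the components of $H[C]$ are even and so not factor-critical, and you correctly recurse on $H[C]$ and take $Q:=A\cup Q_C$, $M:=M_A\cup M_C$. The non-degenerate case (the swap lemma for factor-criticality of components of $H[D]$, and the Hall argument producing $M_A$) is fine as sketched.

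There is, however, a genuine gap in the degenerate case $D=A=\emptyset$. You fix an arbitrary vertex $u$, apply the theorem to $H-u$ to get $(Q',M')$, and set $Q:=Q'\cup\{u\}$, $M:=M'\cup\{uv\}$, where $v$ is supposed to be a neighbour of $u$ inside the unique component of $H-Q$ not saturated by $M'$. Such a $v$ need not exist for the pair $(Q',M')$ the induction hands you. Take $H$ to be the path on four vertices $u\,p\,q\,k_0$ (with perfect matching $\{up,qk_0\}$). Then $H-u$ is the path $p\,q\,k_0$, whose only valid separator is $Q'=\{q\}$, and the induction is equally entitled to return $M'=\{qp\}$ as $M'=\{qk_0\}$. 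If it returns $M'=\{qp\}$, the free component of $H-Q$ is $\{k_0\}$, and $u$ has no neighbour there, so no edge $uv$ exists; the $M'$-augmenting path from $u$ to $k_0$ passes through $q$ and is not a single edge, so ``a short alternating-path argument'' does not deliver what your formula $M:=M'\cup\{uv\}$ requires.

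The gap is closable, but not in the form you wrote. One clean fix is to discard $M'$ entirely in this case and argue directly from a perfect matching $M^*$ of $H$. Applying the theorem to $H-u$ yields $Q'$ and $c$ factor-critical components of $H-u-Q'=H-Q$; by Berge--Tutte and the fact that $\mathrm{def}(H-u)=1$ (since $H$ has a perfect matching and $|V(H-u)|$ is odd) one gets $c=|Q'|+1=|Q|$. Each of these $c$ components is odd, so $M^*$ matches an odd (hence nonzero) number of its vertices to the outside, and all such outside endpoints lie in $Q$; counting, there must be exactly one outgoing $M^*$-edge per component, and these $|Q|$ edges form a matching of $Q$ into distinct components of $H-Q$. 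That matching is the $M$ you want. The alternative is to argue that one may always choose $(Q',M')$ so that the free component is adjacent to $u$; this is true for the canonical Gallai--Edmonds separator but does not come for free from your induction, so it would need a separate argument.
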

The set $Q$ in Theorem~\ref{thm:GallaiEdmonds} is often referred to as a \index{general}{separator}{\em separator}.

\def\LPJJ{_\PARAMETERPASSING{L}{lem:Separate}}
\begin{proof}[Proof of Lemma~\ref{prop:LKSstruct}]
\addtocounter{theorem}{-1}
The idea of the proof is to first obtain some information about the structure of the graph $\BGblack$ with the help of Theorem~\ref{thm:GallaiEdmonds}.
 Then the structure given by Theorem~\ref{thm:GallaiEdmonds} is refined by Lemma~\ref{lem:Separate} to yield the assertions of the lemma.

Let us begin with setting the parameters. 
Let $\beta:=\beta\LPJJ$ be given by Lemma~\ref{lem:Separate} for input parameters $\Omega\LPJJ:=\Omega$, $\rho\LPJJ:=\gamma^2$,
and let $\epsilon'$ and $\pi$ be given by Lemma~\ref{lem:Separate} 
for further input parameter $\epsilon\LPJJ:=\epsilon$. Last, let $k_0$ be given by Lemma~\ref{lem:Separate} with the above parameters and $\gamma\LPJJ:=\nu$.

Without loss of generality we assume
that $\epsilon'\le \epsilon$ and $\beta<\gamma^2$.
 We write $\BS :=\{C\in\clusters\::\: C\subset S\}$ and
%  \BP:=\{C\in\clusters\::\: C\subset P\},
$\BL:=\{C\in\clusters\::\: C\subset L\}$. Further, let $\BSN:=\{C\in\BS\::\: C\subset S^0\}$. 
 
Let $\SEPARATOR$ be a separator and let $N_0$ be a matching given by
Theorem~\ref{thm:GallaiEdmonds} applied to  the graph $\BGblack$. We will presume that the pair $(\SEPARATOR,N_0)$ is chosen among all
the possible choices so that the number of vertices of $\BSN$ that are isolated
in $\BGblack-\SEPARATOR$ and are not covered by
$N_0$ is minimized. Let $\BSI$ denote the set of
vertices in $\BSN$ that are isolated in $\BGblack-\SEPARATOR$. Recall that the components of $\BGblack-\SEPARATOR$ are factor-critical.
 
Define
 $\BSR\subset V(\BGblack)$ as a minimal set such that
\begin{itemize}
\item $\BSI\sm V(N_0) \subseteq \BSR$, and
\item if $C\in \BS$  and there is an edge $DZ\in
E(\BGblack)$ with $Z\in \BSR$, $D\in
\SEPARATOR$, $CD\in N_0$ then $C\in \BSR$.
\end{itemize}

Then each vertex from $\BSR$ is
reachable from $\BSI\sm V(N_0)$ by 
 a path  in $\BGblack$ that alternates between $\BSR$ and $\SEPARATOR$, and has
 every second edge in $N_0$. 
Also note that for all $CD\in N_0$ with $C\in\SEPARATOR$ and $D\in\BSN\setminus\BSR$ we have 
\begin{equation}\label{anti-skub}
\deg_{\BGblack}(C,\BSR)=0\;.
\end{equation}

Let us prove another property of $\BSR$.
\begin{claim}\label{cl:SRS0}
$\BSR\subseteq \BSI \subseteq \BSR\cup V(N_0)$. In particular, $\BSR\subset \BSN$.
\end{claim}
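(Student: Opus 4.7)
The inclusion $\BSI\subseteq\BSR\cup V(N_0)$ is immediate from the first bullet of the definition of $\BSR$. Conversely, once I establish $\BSR\subseteq\BSI$, the ``in particular'' statement $\BSR\subseteq\BSN$ follows since $\BSI\subseteq\BSN$ by definition. Thus the whole claim reduces to proving $\BSR\subseteq\BSI$.

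I would prove this by induction on the recursive construction of $\BSR$. The base case $\BSI\setminus V(N_0)\subseteq\BSI$ is trivial. For the inductive step, suppose $C_t\in\BS$ is added to $\BSR$ via some $D_t\in\SEPARATOR$ and $Z\in\BSR$ with $D_tZ\in E(\BGblack)$ and $C_tD_t\in N_0$. Unfolding the recursion back to an element of $\BSI\setminus V(N_0)$ and, if necessary, shortening the resulting walk to remove repeated vertices, I obtain a simple alternating walk
\[
Z_0,\,D_1,\,C_1,\,D_2,\,C_2,\,\dots,\,D_t,\,C_t,
\]
where $Z_0\in\BSI\setminus V(N_0)$, each $D_i\in\SEPARATOR$, each $C_i\in\BSR$, the ``odd'' edges $Z_0D_1$ and $C_{i-1}D_i$ (for $i\ge 2$) lie in $E(\BGblack)$, and the ``even'' edges $C_iD_i$ lie in $N_0$. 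By the inductive hypothesis, $C_1,\ldots,C_{t-1}\in\BSI$, so in particular each of them forms a singleton component of $\BGblack-\SEPARATOR$.

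Assume towards a contradiction that $C_t\notin\BSI$, i.e., either $C_t\notin\BSN$ or $C_t$ is not isolated in $\BGblack-\SEPARATOR$. I perform the standard alternating-path swap along the walk, setting $C_0:=Z_0$ and
\[
N_0':=\bigl(N_0\setminus\{C_iD_i:1\le i\le t\}\bigr)\,\cup\,\{C_{i-1}D_i:1\le i\le t\}.
\]
The key verification is that $(\SEPARATOR,N_0')$ is again a valid Gallai--Edmonds pair for $\BGblack$: the separator, and hence the decomposition of $\BGblack-\SEPARATOR$ into factor-critical components, is unchanged; $|N_0'|=|N_0|=|\SEPARATOR|$; each $D\in\SEPARATOR\setminus\{D_1,\ldots,D_t\}$ keeps its original matching partner; and $N_0'$ now matches $D_i$ to the singleton component $\{C_{i-1}\}$. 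These $t$ singletons are pairwise distinct and, since $Z_0\notin V(N_0)$, they are also distinct from every component already matched by $N_0$ outside $\{D_1,\ldots,D_t\}$, so the map ``separator vertex $\mapsto$ its matched component'' remains injective.

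Finally I compare the counts of $\BSN$-vertices that are isolated in $\BGblack-\SEPARATOR$ and uncovered by the matching before and after the swap. The only change from $V(N_0)$ to $V(N_0')$ is that $Z_0$ enters and $C_t$ leaves; each $C_i$ with $1\le i\le t-1$ remains covered (now via $C_iD_{i+1}$), and every vertex outside the walk is unaffected. Since $Z_0\in\BSI$, it contributed $1$ to the old count and $0$ to the new one; since $C_t$ is either not in $\BSN$ or not isolated, it contributes $0$ to the new count. Hence the count strictly decreases, contradicting the minimality in the choice of $(\SEPARATOR,N_0)$. The main technical obstacle is precisely the bookkeeping ensuring that $(\SEPARATOR,N_0')$ is still a Gallai--Edmonds pair after the swap; the rest of the argument is routine.
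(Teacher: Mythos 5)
Your argument is correct and takes essentially the same route as the paper: take an alternating path $R$ from $\BSI\setminus V(N_0)$ to a putative $C\in\BSR\setminus\BSI$, set $N_0':=N_0\triangle E(R)$, verify that $(\SEPARATOR,N_0')$ is still a Gallai--Edmonds pair, and contradict the minimality of the number of uncovered vertices of $\BSI$. The paper leaves the bookkeeping implicit (``it is straightforward to check\ldots''); you spell it out via an induction on the recursive construction of $\BSR$, which you correctly identify as the routine part.
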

\begin{proof}[Proof of Claim~\ref{cl:SRS0}]
By the definition of $\BSR$, we only need to show that $\BSR\subset\BSI$.
So suppose there is  a vertex $C\in \BSR\sm\BSI$. 
By the definition of $\BSR$ there is a non-trivial path $R$ going from $\BSI\sm
V(N_0)$ to $C$ that alternates between $\BSR$ and $\SEPARATOR$, and has every second edge in
$N_0$. Then, the matching $N_0':=N_0\triangle E(R)$ covers more vertices of
$\BSI$ than $N_0$ does. Further, it is straightforward to check that the separator $\SEPARATOR$
together with the matching $N_0'$ satisfies the assertions of
Theorem~\ref{thm:GallaiEdmonds}. This is a contradiction, as desired.
\end{proof}

Using a very similar alternating path argument we see the
following.
\begin{claim}\label{augmiL}
If $CD\in N_0$ with $C\in \SEPARATOR$ and $D\notin \BSI$ then
$\deg_{\BGblack}(C,\BSR)=0$.
\end{claim}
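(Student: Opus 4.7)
The plan is to mimic the alternating-path swap used in the proof of Claim~\ref{cl:SRS0}. Suppose for contradiction that $CD\in N_0$ with $C\in\SEPARATOR$ and $D\notin\BSI$, yet $\deg_{\BGblack}(C,\BSR)>0$, and pick any neighbour $Z\in\BSR$ of $C$. Since $\BSR\subset\BSI\subset\BSN\setminus\SEPARATOR$ by Claim~\ref{cl:SRS0} (the $\BSI$-vertices are isolated in $\BGblack-\SEPARATOR$, hence not in $\SEPARATOR$), we have $Z\ne C$. By the definition of $\BSR$, there is an alternating path $R$ in $\BGblack$ from some $v\in\BSI\setminus V(N_0)$ to $Z$ whose vertices lie alternately in $\BSR$ and $\SEPARATOR$, both endpoints in $\BSR$, and whose edges alternate between non-$N_0$ and $N_0$, starting with a non-$N_0$ edge (because $v$ is $N_0$-unmatched) and thus ending with an $N_0$ edge.

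The next step is to extend $R$ by $ZC$ and $CD$ to obtain $R'$. I would first check that $R'$ is a simple path: $C\notin V(R)$ because every $\SEPARATOR$-vertex on $R$ is $N_0$-matched to a vertex of $\BSR\subset\BSI$, whereas $C$ is $N_0$-matched to $D\notin\BSI$; and $D\notin V(R)$ because each vertex of $R$ is either in $\BSI$ (and $D\notin\BSI$) or in $\SEPARATOR$ (but then it would be $N_0$-matched to a vertex of $\BSR$, not to $C$). The alternation is preserved since $ZC\notin N_0$ (the $N_0$-partner of $C$ is $D\ne Z$) and $CD\in N_0$, so $R'$ has exactly one more $N_0$-edge and one more non-$N_0$-edge than $R$.

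Now set $N_0':=N_0\triangle E(R')$. Since $R'$ has equal numbers of $N_0$- and non-$N_0$-edges, $|N_0'|=|N_0|$ and $N_0'$ is again a matching. I would then verify that $(\SEPARATOR,N_0')$ still satisfies Theorem~\ref{thm:GallaiEdmonds}: the separator is unchanged, so the components of $\BGblack-\SEPARATOR$ remain factor-critical; and the swap simply re-matches each $\SEPARATOR$-vertex on $R'$ to the neighbouring $\BSR$-vertex along the path, with $C$ in particular being rematched from $D$ to $Z$. Because every new partner lies in $\BSR\subset\BSI$, each is a singleton component of $\BGblack-\SEPARATOR$; these singletons are pairwise distinct (the path is simple) and disjoint from the components previously hit by off-path $\SEPARATOR$-vertices (which were already pairwise distinct under $N_0$). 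Hence $N_0'$ still matches every separator vertex to a distinct component.

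Finally, $v\in\BSI\setminus V(N_0)$ becomes covered by $N_0'$ while the only vertex newly uncovered is $D$, which by hypothesis lies outside $\BSI$. Thus $N_0'$ covers strictly more of $\BSI$ than $N_0$, contradicting the minimality in the choice of $(\SEPARATOR,N_0)$. The mildly delicate point in executing this plan is the bookkeeping in paragraph three (checking the ``distinct components'' condition along the swapped path), but this is entirely analogous to what already appears implicitly in Claim~\ref{cl:SRS0}; no new idea is required beyond extending the alternating path by the two extra edges $ZC$ and $CD$.
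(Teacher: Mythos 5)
Your proof is correct and follows exactly the alternating-path swap the paper uses (and which it explicitly says is "very similar" to the argument in Claim~\ref{cl:SRS0}): extend the alternating path defining $\BSR$ by the edges $ZC\notin N_0$ and $CD\in N_0$, take the symmetric difference, and observe that the new matching covers one more vertex of $\BSI$ while still certifying the Gallai--Edmonds structure, contradicting the minimality of $(\SEPARATOR,N_0)$. Your verification that $R'$ is simple and that $N_0'$ still matches $\SEPARATOR$-vertices to distinct components is a careful spelling-out of details the paper leaves implicit, but the route is the same.
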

\HIDDENPROOF{ Indeed, suppose
there is $CD\in N_0$ with $C\in \SEPARATOR$ and $D\notin \BSI$ but $CC'\in E(\BGblack)$ for some $C'\in\BSR$. Then there is a
non-trivial path $R$ with its first
three vertices $D,C$ and $C'$ and its last vertex in $\BSI\sm V(N_0)$ that alternates between $\BSR$ and $\SEPARATOR$, and has every second edge
in $N_0$. Then, the matching $N_0':=N_0\triangle E(R)$ covers
more vertices of $\BSI$ than $N_0$ does. Further, the
separator $\SEPARATOR$ together with the matching $N_0'$ satisfies the
assertions of Theorem~\ref{thm:GallaiEdmonds}. This
contradiction proves Claim~\ref{augmiL}.}

Using the factor-criticality of the components of $\BGblack-\SEPARATOR$ we extend $N_0$ to a matching $N_1$ as follows. For each component $K$ of
$\BGblack-\SEPARATOR$ which meets $V(N_0)$, we add a perfect matching of
$K-V(N_0)$. Furthermore, for each non-singleton component
$K$ of $\BGblack-\SEPARATOR$ which does not meet
$V(N_0)$, we add a matching which meets all but exactly one
vertex of $\BL\cap V(K)$. 
This is possible as by the
definition of the class $\LKSsmallgraphs{n}{k}{\eta}$ we have
that $\BGblack-\BL$ is edgeless, and so
$\BL\cap V(K)\neq\emptyset$.
This choice of $N_1$ guarantees that
\begin{equation}\label{eq:bir1}
e_{\BGblack}(\clusters\setminus V(N_1))=0\;.
\end{equation}

We set
\begin{align*}
M&:=\left\{ C_1C_2\in N_0\::\: C_1\in \BSR, C_2\in \SEPARATOR\right\}\;.
\end{align*}
We have that
\begin{equation}\label{eq:645}
e_{\BGblack}\big(\clusters\setminus V(N_1),V(M)\cap\BSR\big)=0\;.
\end{equation}

As $\BS$ is an independent set in $\BGblack$, we have that
\begin{equation}\label{eq:BinL}
\SEPARATOR_M:=V(M)\cap \SEPARATOR\subset \BL\;.
\end{equation}

The matching $M$ in $\BGblack$ corresponds to an $(\epsilon',\gamma^2,\clustersize)$-\semiregular matching $\mathcal M$ in the underlying graph $\Gblack$, with $V_2(\mathcal M)\subset \bigcup \SEPARATOR$ (recall that \semiregular matchings have orientations on their edges). Likewise, we define $\mathcal N_1$ as the $(\epsilon',\gamma^2,\clustersize)$-\semiregular matching corresponding to $N_1$. The $\mathcal N_1$-edges are oriented so that $V_1(\mathcal N_1)\cap \bigcup \SEPARATOR=\emptyset$; this condition does not specify orientations of all the $\mathcal N_1$-edges and we orient the remaining ones in an arbitrary fashion. We write $\SR:=\bigcup\BSR$.
% for the set corresponding to $\BSR$ in $G_\mathrm{black}$. 

\begin{claim}\label{cl:CrossEdges1}
$e_{\Gcapt}\big(L\setminus (\smallatoms\cup V(\mathcal M)),\SR\big)=0$.
\end{claim}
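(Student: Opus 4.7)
The plan is to rule out, one family at a time, the four kinds of captured edges listed in Definition~\ref{capturededgesdef}. Fix $v\in L\setminus(\smallatoms\cup V(\mathcal M))$ and $w\in\SR=\bigcup\BSR$. By Claim~\ref{cl:SRS0} we have $\BSR\subseteq\BSN$, so $w\in S^0=S\setminus(V(\Gexp)\cup\smallatoms)$. In particular, $w\notin V(\Gexp)$ excludes any captured $\Gexp$-edge; and since neither $v$ nor $w$ lies in $\smallatoms$, no captured $\smallatoms$-edge $vw$ can exist. For the $\HugeVertices$-family, note that $w\in\bigcup\clusters$ is disjoint from $\HugeVertices$ by Definition~\ref{sparseclassdef}; and $v\in\HugeVertices$ would give $\deg_G(v)\geq\Omega^{**}k$, while property~\eqref{def:LKSsmallB} of Definition~\ref{def:LKSsmall} forces every neighbour of $w\in S$ to have degree exactly $\lceil(1+\eta)k\rceil$, so in that case $vw\notin E(G)$.

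The only remaining option is that $vw\in E(\Gblack)$. Then there are clusters $C_v\ni v$ and $C_w\ni w$ with $C_vC_w\in E(\BGblack)$. Since the sparse decomposition is taken with respect to the partition $\{S,L\}$, these clusters satisfy $C_v\in\BL$ and $C_w\in\BSR$. Because $\BSR\subseteq\BSI$, the cluster $C_w$ is isolated in $\BGblack-\SEPARATOR$, so the edge $C_vC_w$ can only leave $C_w$ through $\SEPARATOR$, forcing $C_v\in\SEPARATOR\cap\BL$. By Theorem~\ref{thm:GallaiEdmonds}, $N_0$ matches every vertex of $\SEPARATOR$, so there is $D_v\in V(\BGblack)\setminus\SEPARATOR=\BS\cup\BL$ with $C_vD_v\in N_0$.

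I then plan to split on whether $D_v\in\BS$ or $D_v\in\BL$. If $D_v\in\BS$, the second defining clause of $\BSR$, applied with $(C,D,Z):=(D_v,C_v,C_w)$, yields $D_v\in\BSR$; hence $C_vD_v\in M$ and $C_v\in\SEPARATOR_M$, which places $v\in\bigcup\SEPARATOR_M=V_2(\mathcal M)\subseteq V(\mathcal M)$, contradicting the choice of $v$. If instead $D_v\in\BL$, then $D_v\notin\BSI\subseteq\BS$, and Claim~\ref{augmiL} gives $\deg_{\BGblack}(C_v,\BSR)=0$, contradicting $C_vC_w\in E(\BGblack)$ with $C_w\in\BSR$. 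Either alternative is impossible, and hence no $\Gblack$-edge $vw$ exists, completing the proof.

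The main bookkeeping subtlety I expect is remembering that $\mathcal M$ was oriented precisely so that $V_2(\mathcal M)=\bigcup\SEPARATOR_M$; this is what turns ``$C_v\in\SEPARATOR_M$'' into the contradiction ``$v\in V(\mathcal M)$''. Beyond that, the argument is a clean case analysis built on the extremal choice of $(\SEPARATOR,N_0)$ encapsulated in Claims~\ref{cl:SRS0} and~\ref{augmiL}.
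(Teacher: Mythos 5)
Your proof is correct and follows essentially the same strategy as the paper's: rule out each family of captured edges in Definition~\ref{capturededgesdef}, with the $\Gexp$, $\smallatoms$ and $\HugeVertices$ cases handled by $\BSR\subseteq\BSN$ and Definition~\ref{def:LKSsmall}\eqref{en:LKSsmall.noSS}, and the $\Gblack$ case handled by the alternating-path structure of $\BSR$, $\SEPARATOR$ and $N_0$. The only (cosmetic) difference is in the organisation of the $\Gblack$ case: the paper first establishes $\deg_{\BGblack}(C,\BSR)=0$ for all $C\in\BL\setminus V(M)$, splitting on $C\in\SEPARATOR$ and then $D\in\BSI$ versus $D\notin\BSI$ and citing~\eqref{anti-skub}, whereas you derive $C_v\in\SEPARATOR$ directly from isolation of $C_w$, split on $D_v\in\BS$ versus $D_v\in\BL$, and invoke the closure clause of $\BSR$ in place of~\eqref{anti-skub} — these are the same observation phrased contrapositively, so the two proofs are equivalent in substance.
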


\begin{proof}[Proof of Claim~\ref{cl:CrossEdges1}] We start by
showing that for every cluster $C\in \BL\setminus V(M)$ we have
\begin{equation}\label{eq:degCSR}
\deg_{\BGblack}(C,\BSR)=0\;.
\end{equation} 
First, if $C\not
\in \SEPARATOR$, then~\eqref{eq:degCSR} is true since $\BSR\subseteq \BSI$ by
Claim~\ref{cl:SRS0}. So suppose that $C\in \SEPARATOR$, and let $D\in V(\BGblack)$ be such that $DC\in N_0$. Now if  $D\notin\BSI$
then~\eqref{eq:degCSR} follows from Claim~\ref{augmiL}. 
On the other hand, suppose $D\in\BSI\subseteq
\BS^0$. As $C\notin V(M)$, we know that $D\notin\BSR$, and
thus,~\eqref{eq:degCSR}
follows from~\eqref{anti-skub}. 
%We have thus established~\eqref{eq:degCSR}.

Now, by~\eqref{eq:degCSR}, $\Gblack$  has no edges between $L\setminus (\smallatoms\cup
V(\mathcal M))$ and $\SR$. Also, no such edges can be in
$\Gexp$  or incident with $\smallatoms$, since
$\BSR\subseteq \BSN$ by Claim~\ref{cl:SRS0}. Finally, since $G\in\LKSsmallgraphs{n}{k}{\eta}$ and $\Omega^{**}>2>(1+\eta)$, there
are no edges between $\HugeVertices$ and $S$. This proves the claim.
\end{proof}

We prepare ourselves for an application of Lemma~\ref{lem:Separate}. The
numerical parameters of the lemma are $\Omega\LPJJ,\rho\LPJJ,\epsilon\LPJJ$ and $\gamma\LPJJ$ as above. The input objects for the lemma are the graph $\GD$ of
order $n'\le n$, the collection of $(\gamma k,\gamma)$-dense spots
$\DenseSpots$, the matching $\mathcal M$, the $(\nu k)$-ensemble $\mathcal
C_\PARAMETERPASSING{L}{lem:Separate}:=\BSR\sm V(N_1)$, and the set $Y_\PARAMETERPASSING{L}{lem:Separate}:=L\cap
\smallatoms$.
Note that Definition~\ref{bclassdef}, item~\ref{defBC:dveapul}, implies that $\DenseSpots$ absorbs $\mathcal M$. Further,~\eqref{eq:510ass1} is satisfied by Definition~\ref{bclassdef}, item~\ref{defBC:prepartition}.

The output of Lemma~\ref{lem:Separate} is an $(\epsilon,\beta,\pi \clustersize)$-\semiregular matching $\mathcal M'$ with the following properties. 
\begin{enumerate}[(I)]
\item\label{eq:MsubMApp}
$|V(\mathcal M)\setminus V(\mathcal M')|<\epsilon n'\le \epsilon n$.
\item\label{theCandDwelike}
For each $(X_1,X_2)\in\mathcal M'$ there are sets  $C\in \BSR$ 
and $(D_1,D_2;E_D)\in\DenseSpots$
such that $X_1\subseteq C\cap D_1$ and either $X_2\subseteq L\cap \smallatoms\cap D_2$ or there exists $C'\in\SEPARATOR_M$ such that $X_2\subseteq L\cap \smallatoms\cap C'$. 

(Indeed, to see this we use that $\V_1(\mathcal
M)\subset \BSR$ and that $V_2(\mathcal M)\subseteq \bigcup \SEPARATOR_M$ by the
definition of~$\mathcal M$.)
\item\label{eq:SeparatedM} There is a partition of $\mathcal M'$ into $\mathcal M_1$ and $ \mathcal M_B$ such that
$$e_{\GD}\left(
\:
\left(\left(\SR\setminus V(\mathcal N_1)\right)\cup V_1(\mathcal M)\right)\setminus V_1(\mathcal M_1)
\:,\:
\left((L\cap \smallatoms)\cup V_2(\mathcal M)\right)\setminus V_2(\mathcal M_B)
\:\right)<\gamma k n' \;.$$
\end{enumerate}
We claim that also
\begin{enumerate}[(I)]\setcounter{enumi}{3}
\item\label{eq:M'Next}
$V(\mathcal{M'})\cap V(\mathcal N_1\setminus \mathcal M)=\emptyset$.
\end{enumerate}
Indeed, let $(X_1,X_2)\in\mathcal M'$
be arbitrary. Then by~\eqref{theCandDwelike} there is $C\in
\BSR$ such that $X_1\subset C$. By Claim~\ref{cl:SRS0},
$C$ is a singleton component of $\BGblack-\SEPARATOR$. In particular, if $C$ is covered by $N_1$
then $C\in V(M)$. It follows that $X_1\cap V(\mathcal N_1\setminus \mathcal M)=\emptyset$.
In a similar spirit, the easy fact that $(Y\cup\bigcup \SEPARATOR_M)\cap V(\mathcal N_1\setminus \mathcal M)=\emptyset$ together with~\eqref{theCandDwelike} gives $X_2\cap V(\mathcal N_1\setminus \mathcal M)=\emptyset$. This establishes~\eqref{eq:M'Next}.

\medskip

Observe that~\eqref{theCandDwelike} implies that $V_1(\mathcal M')\subset \SR$, and so,  by Claim~\ref{cl:SRS0} we know that 
\begin{equation}\label{eq:Asub}
V_1(\mathcal M_B)\subseteq \SR\subseteq \bigcup\BSI\subseteq\SN.
\end{equation}

Set
\begin{equation}\label{eq:defMA} 
\mathcal M_A:=(\mathcal N_1\setminus \mathcal M)\cup \mathcal M_1\;.
\end{equation}

Then $\mathcal M_A$ is an
$(\epsilon,\beta,\pi\clustersize)$-\semiregular
matching. Note that from now on, the sets $\XA, \XB$ and $\XC$ are defined. The
situtation is illustrated in Figure~\ref{fig:struktura1}.
\begin{figure}[t] \centering
\includegraphics[scale=1.0]{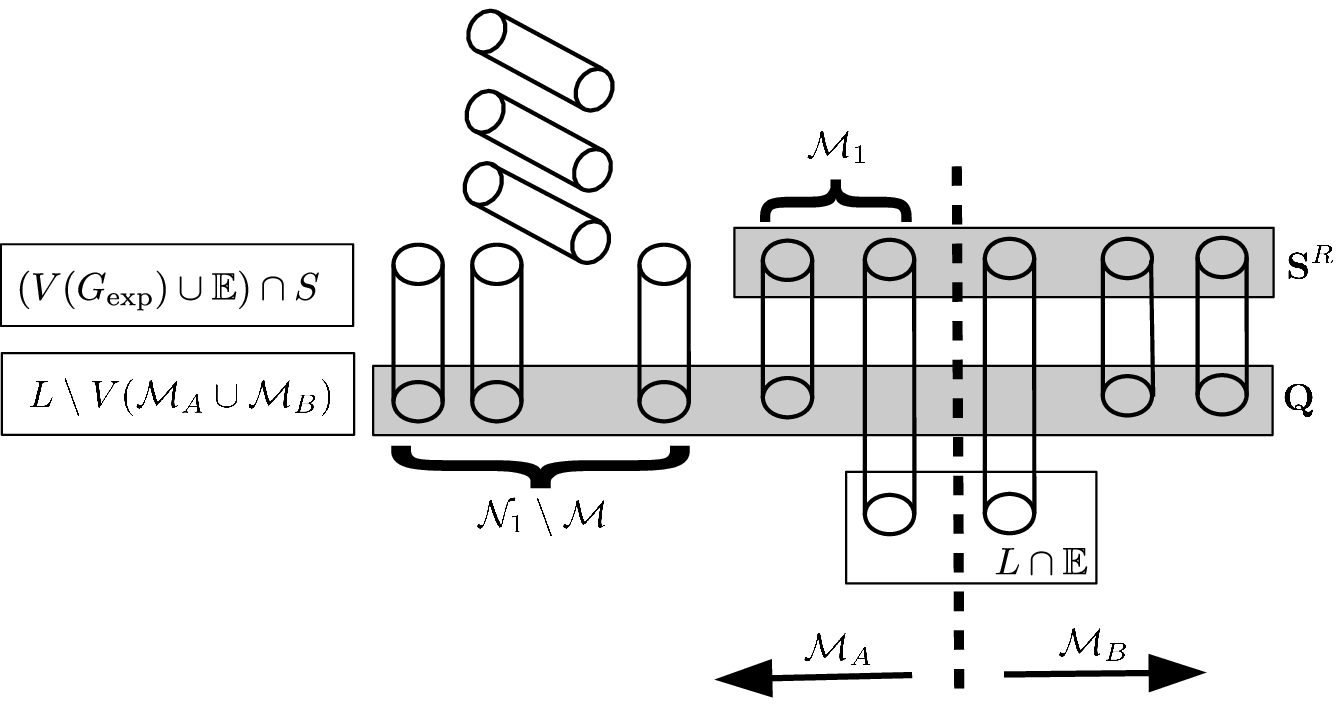}
\caption[Situation in $G$ in Lemma~\ref{prop:LKSstruct} after applying Lemma~\ref{lem:Separate}]{The situation in $G$ after applying Lemma~\ref{lem:Separate}. The dotted line illustrates the separation as in~(III).} 
\label{fig:struktura1}
\end{figure}
By~\eqref{eq:M'Next}, we
have $V(\mathcal M_A)\cap V(\mathcal M_B)=\emptyset$, as required for
Lemma~\ref{prop:LKSstruct}\eqref{prop6.1a}. Lemma~\ref{prop:LKSstruct}\eqref{eq:lastminute} follows from~\eqref{eq:Asub}. The claim below asserts that the next two properties are satisfied as well.
\begin{claim}\label{cl:L54cd}
Lemma~\ref{prop:LKSstruct}\eqref{eq:Mspots} and
Lemma~\ref{prop:LKSstruct}\eqref{eq:M1} are satisfied.
\end{claim}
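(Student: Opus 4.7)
The plan is to do a case analysis based on the origin of each matching edge. By \eqref{eq:defMA} and the definition of $\M_B$, we have $\M_A\cup\M_B=(\mathcal N_1\setminus\mathcal M)\cup\mathcal M'$, so every $(X_1,X_2)\in\M_A\cup\M_B$ falls into exactly one of the following two cases.

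\textbf{Case 1: $(X_1,X_2)$ corresponds to an edge of $\mathcal N_1\setminus \mathcal M$.} By the construction of $\mathcal N_1$ (as the \semiregular matching corresponding to $N_1\subseteq E(\BGblack)$), both $X_1$ and $X_2$ are clusters from $\clusters$. This immediately gives the first option of \eqref{eq:M1}. For \eqref{eq:Mspots}, Definition~\ref{bclassdef}\eqref{defBC:dveapul} applied to the edge $X_1X_2\in E(\BGblack)$ yields a dense spot $(D_1,D_2;E_D)\in\DenseSpots$ with $X_1\subseteq D_1$ and $X_2\subseteq D_2$. Finally, since the sparse decomposition is taken with respect to the pre-partition $\{S,L\}$, Definition~\ref{bclassdef}\eqref{defBC:prepartition} forces every cluster to lie entirely in $S$ or entirely in $L$, so $X_1,X_2$ satisfy the side condition of \eqref{eq:Mspots}.

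\textbf{Case 2: $(X_1,X_2)\in \mathcal M'$.} Property~\eqref{theCandDwelike} of the output of Lemma~\ref{lem:Separate} (obtained with $\mathcal C\LPJJ=\BSR\setminus V(N_1)$ and $Y\LPJJ=L\cap\smallatoms$) provides a cluster $C\in\BSR$ and a dense spot $(D_1,D_2;E_D)\in\DenseSpots$ such that $X_1\subseteq C\cap D_1$, together with the dichotomy that either $X_2\subseteq (L\cap\smallatoms)\cap D_2$, or $X_2\subseteq C'\cap D_2$ for some $C'\in\SEPARATOR_M$. The inclusion $X_1\subseteq C\in\clusters$ verifies the first half of \eqref{eq:M1}. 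By Claim~\ref{cl:SRS0} we have $C\in\BSR\subseteq\BSN\subseteq\BS$, so $X_1\subseteq S$. For $X_2$, in the first subcase $X_2\subseteq L\cap\smallatoms$ verifies the second option of \eqref{eq:M1} and places $X_2\subseteq L$; in the second subcase $\SEPARATOR_M\subseteq\BL$ by \eqref{eq:BinL}, so $X_2$ is contained in a cluster of $\BL\subseteq\clusters$, giving the first option of \eqref{eq:M1} and $X_2\subseteq L$. In both subcases $X_2\subseteq D_2$, completing the verification of \eqref{eq:Mspots}.

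The argument is essentially an unpacking of definitions, so no serious obstacle is expected; the only point requiring care is the $\mathcal M'$ case, where the two subcases of \eqref{theCandDwelike} have to be mapped correctly onto the ``cluster'' versus ``$L\cap\smallatoms$'' alternative in \eqref{eq:M1}, and one must invoke Claim~\ref{cl:SRS0} and \eqref{eq:BinL} to upgrade the qualitative information ``$C\in\BSR$'' and ``$C'\in\SEPARATOR_M$'' into the side conditions $X_1\subseteq S$ and $X_2\subseteq L$ demanded by \eqref{eq:Mspots}.
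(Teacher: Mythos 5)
Your proof is correct and follows essentially the same two-case split that the paper uses (edges coming from $\mathcal N_1$ versus from $\mathcal M'$), invoking Definition~\ref{bclassdef}\eqref{defBC:dveapul} and~\eqref{defBC:prepartition} in the first case and the output~\eqref{theCandDwelike} of Lemma~\ref{lem:Separate} in the second; the paper states this in one line while you spell out the details. You also correctly interpreted the second alternative in~\eqref{theCandDwelike} as $X_2\subseteq C'\cap D_2$ rather than the literal $X_2\subseteq L\cap\smallatoms\cap C'$ (which would be empty since $\smallatoms$ is disjoint from $\bigcup\clusters$), which is the reading forced by Lemma~\ref{lem:Separate}\eqref{it:Sep2} with the choices $\mathcal C_\PARAMETERPASSING{L}{lem:Separate}$, $Y_\PARAMETERPASSING{L}{lem:Separate}$, and $\V_2(\M)\subset\SEPARATOR_M$.
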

\begin{proof}[Proof of Claim~\ref{cl:L54cd}]
Consider an arbitrary pair $(X_1,X_2)\in \M_A\cup \M_B$. Either we have that $(X_1,X_2)\in\mathcal N_1$ or $(X_1,X_2)\in\M'$. In the former case, $X_1X_2$ is an edge in $\BGblack$. Then the properties for $(X_1,X_2)$ asserted in Lemma~\ref{prop:LKSstruct}\eqref{eq:Mspots} and
Lemma~\ref{prop:LKSstruct}\eqref{eq:M1} follow from the fact that the cluster graph is prepartitioned with respect to $S$ and $L$, and from Definition~\ref{bclassdef}\eqref{defBC:dveapul}. 

In the case $(X_1,X_2)\in\M'$, the asserted properties are given by~\eqref{theCandDwelike}.
\end{proof}

We now turn to Lemma~\ref{prop:LKSstruct}\eqref{fewfewfew}. First we prove
some auxiliary statements. 

\begin{claim}\label{cl:S0mN1}
We have 
$\BSN\setminus V(N_1\setminus M)\subset \BSR$.
\end{claim}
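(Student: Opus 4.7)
The plan is a case analysis on where $C\in\BSN$ sits in the Gallai--Edmonds structure of $\BGblack$: is $C$ isolated in $\BGblack-\SEPARATOR$ (i.e.\ is $C\in\BSI$), and in its component is $C$ matched by $N_0$? The key driving observation is that any $N_0$-edge incident to $C$ which is not in $M$ automatically lies in $N_1\setminus M$, so if $C$ is covered by $N_0$ then either the covering edge lies in $M$ (which forces $C\in\BSR$ by the definition of $M$) or we are done since $C\in V(N_1\setminus M)$. Consequently, I would argue the contrapositive: fix $C\in\BSN$ with $C\notin V(N_1\setminus M)$ and deduce $C\in\BSR$.

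For the first case assume $C\in\BSI$. If $C\notin V(N_0)$ then $C\in\BSR$ directly from the minimality clause $\BSI\setminus V(N_0)\subseteq\BSR$. Otherwise, let $CD\in N_0$; as $C$ is isolated in $\BGblack-\SEPARATOR$, the partner $D$ must lie in $\SEPARATOR$. If $CD\notin M$, then $CD\in N_0\setminus M\subseteq N_1\setminus M$, contradicting the assumption on $C$; hence $CD\in M$ and therefore $C\in\BSR$ by the defining condition of $M$ (using that $\BS$ is independent, so the $\BSR$-endpoint of $CD$ must be $C$).

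For the second case assume $C$ lies in a non-singleton component $K$ of $\BGblack-\SEPARATOR$, and split on whether $K$ meets $V(N_0)$. If $K\cap V(N_0)\neq\emptyset$, then by Theorem~\ref{thm:GallaiEdmonds} the set $V(N_0)\cap K$ consists of a single vertex $D'$; for $C\neq D'$ the perfect matching of $K-V(N_0)$ added when forming $N_1$ covers $C$ with an edge of $N_1\setminus N_0\subseteq N_1\setminus M$, giving a contradiction, while the case $C=D'$ reduces exactly to the argument of the first case. If $K$ does not meet $V(N_0)$, then the matching added to $N_1$ on $K$ is a perfect matching of $K-v^*$ for some $v^*\in\BL\cap V(K)$ (which exists because $\BGblack-\BL$ is edgeless and $K$ is factor-critical); since $C\in\BS\subseteq V(K)\setminus\{v^*\}$, it is covered by such an edge, again in $N_1\setminus M$, contradiction.

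The only mildly subtle point is the final subcase: one must verify that the matching described by the phrase ``meets all but exactly one vertex of $\BL\cap V(K)$'' can be chosen so that every $\BS$-vertex of $K$ is actually covered, which is precisely what factor-criticality of $K$ (applied to $v^*\in\BL\cap V(K)$) provides. Every other step is a direct consequence of the definitions of $\BSR$, $M$ and $N_1$, and of the inclusions $\BSR\subseteq\BSI\subseteq\BSN$ already recorded in Claim~\ref{cl:SRS0}.
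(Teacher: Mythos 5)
Your overall strategy is sound and is essentially a fully unpacked version of the paper's argument. The paper's proof is more compressed: it observes that every $C\in\BSN\setminus\BSI$ is covered by $N_1$ (and, via Claim~\ref{cl:SRS0}, every $C\in\BSI$ is in $\BSR\cup V(N_0)$), concludes $C\in\BSR\cup V(M)$ from the hypothesis $C\notin V(N_1\setminus M)$, and then uses~\eqref{eq:BinL} to rule out $C$ being the $\SEPARATOR$-endpoint of an $M$-edge. You instead trace through the Gallai--Edmonds structure component by component, which is longer but makes the same moves. Your treatment of the non-singleton-component subcase, including the appeal to factor-criticality to get a matching that covers every vertex of $\BS\cap V(K)$, is correct.

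There is, however, a genuine (if small) gap: your case split between ``$C\in\BSI$'' and ``$C$ lies in a non-singleton component of $\BGblack-\SEPARATOR$'' is not exhaustive, because a vertex $C\in\BSN$ may lie in $\SEPARATOR$ itself and hence in neither category. Nothing in the construction forces $\BS\cap\SEPARATOR=\emptyset$; the Gallai--Edmonds separator of $\BGblack$ can certainly contain clusters from $\BS$. The paper's one-line observation ``if $C\notin\BSI$ then $C\in V(N_1)$'' silently covers this, since every vertex of $\SEPARATOR$ is matched by $N_0\subseteq N_1$. To close the gap in your version: if $C\in\BSN\cap\SEPARATOR$ then $CD\in N_0$ for some $D$ outside $\SEPARATOR$; this edge cannot lie in $M$, because then its $\BSR$-endpoint would have to be $D$ (as $\BSR\subseteq\BSI$ is disjoint from $\SEPARATOR$), forcing $D\in\BSR\subseteq\BS$, which contradicts the independence of $\BS$ since $C\in\BS$ too. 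Hence $CD\in N_0\setminus M\subseteq N_1\setminus M$, so $C\in V(N_1\setminus M)$ and the case is vacuous. One further minor point: in the subcase $C=D'$ of a non-singleton component, saying it ``reduces exactly to the argument of the first case'' glosses over the fact that there the alternative $CD\in M$ is actually impossible (it would give $C\in\BSR\subseteq\BSI$, contradicting $C$ not being isolated); but since that alternative still yields the desired conclusion $C\in\BSR$, the logic goes through.
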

\begin{proof}[Proof of Claim~\ref{cl:S0mN1}]
Let $C\in\BSN\setminus V(N_1\setminus M)$. 
Note that if $C\notin \BSI$, then $C\in V(N_1)$. On the other hand, if $C\in \BSI$, then we use  Claim~\ref{cl:SRS0} to see that $C\in\BSR\cup V(N_1)$.
We deduce that in either case $C\in \BSR\cup V(N_1)$. The choice of $C$ implies that $C\in \BSR\cup V(M)$.
Now, if $C\in V(M)$, then $C\in\BSR$, by~\eqref{eq:BinL} and by the definition of $M$. Thus $C\in\BSR$, as desired.
\end{proof}

It will be convenient to work with a set $\bar S^0\subset S^0$, $\bar S^0:=\left(S\cap\bigcup \clusters\right)\setminus V(\Gexp)=\bigcup \BSN$. 
The next two easy claims assert absence of edges of certain types incident to $S^0$ and $\bar S^0$.
\begin{claim}\label{cl:S0barS0iso}
The vertices in $S^0\setminus \bar S^0$ are isolated in $\Gcapt$.
\end{claim}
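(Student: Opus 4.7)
The plan is to take an arbitrary vertex $v\in S^0\setminus \bar S^0$ and show that it cannot be an endpoint of any of the four types of edge listed in Definition~\ref{capturededgesdef}. First, unpacking the definitions, $v\in S$ but $v\notin V(\Gexp)$, $v\notin \smallatoms$, and (because $v\notin \bar S^0=\bigcup\BSN$ while $v\in S\setminus V(\Gexp)$) also $v\notin \bigcup\clusters$.

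For the four edge types: edges of $\Gblack$ live on $\bigcup\clusters$ by Definition~\ref{bclassdef}\ref{defBC:RL}, so $v$ is incident to none of them; edges of $\Gexp$ have both endpoints in $V(\Gexp)$, which excludes $v$; and edges in $E_G(\smallatoms,\smallatoms\cup\bigcup\clusters)$ require at least one endpoint in $\smallatoms$ and the other in $\smallatoms\cup\bigcup\clusters$, both of which miss $v$.

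It remains to handle the edges incident to $\HugeVertices$, which is the only slightly non-trivial step. Since $v\in S$ has $\deg_G(v)<(1+\eta)k<\Omega^{**}k$, certainly $v\notin\HugeVertices$. Moreover, by Definition~\ref{def:LKSsmall}\eqref{en:LKSsmall.noSS} every neighbour of a vertex in $\smallvertices{\eta}{k}{G}=S$ has degree exactly $\lceil(1+\eta)k\rceil$, and since $\Omega^{**}>2>1+\eta$ such neighbours cannot lie in $\HugeVertices$ either. Hence no edge of $G$ incident to $v$ is captured because of $\HugeVertices$, and the claim follows.
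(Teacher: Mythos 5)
Your proof is correct and follows essentially the same approach as the paper's: you check each of the four edge types in Definition~\ref{capturededgesdef} and rule them out using $v\notin V(\Gexp)$, $v\notin\smallatoms$, $v\notin\bigcup\clusters$, and Definition~\ref{def:LKSsmall}\eqref{en:LKSsmall.noSS} for the $\HugeVertices$-incident edges. You simply spell out the details (e.g.\ unpacking why $v\notin\bigcup\clusters$ and why $v\notin\HugeVertices$) that the paper leaves implicit.
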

\begin{proof}[Proof of Claim~\ref{cl:S0barS0iso}]
Indeed, let us check Definition~\ref{capturededgesdef}. Clearly, $S^0\setminus \bar S^0$ is disjoint from $V(\Gblack)$ and $V(\Gexp)$. Further, $S^0\setminus \bar S^0$ sends no edges to $\HugeVertices$, by Definition~\ref{def:LKSsmall}\eqref{en:LKSsmall.noSS}. Lastly, the set $S^0\setminus \bar S^0$ is disjoint from the ``avoiding edges'' spanned by the vertex sets $\smallatoms$ and $\smallatoms\cup \bigcup \clusters$.
\end{proof}
\begin{claim}\label{cl:S0avoid}
We have $\Gcapt[L\cap \smallatoms,\bar S^0]=\GD[L\cap \smallatoms,\bar S^0]$.
\end{claim}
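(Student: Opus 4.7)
The plan is to establish the equality by verifying the two inclusions separately, with most of the work lying in the harder direction.

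The containment $\GD[L\cap\smallatoms,\bar S^0]\subseteq\Gcapt[L\cap\smallatoms,\bar S^0]$ is immediate from Definition~\ref{capturededgesdef}: since $L\cap\smallatoms\subseteq\smallatoms$ and $\bar S^0\subseteq \bigcup\clusters$, every edge of $\GD$ with endpoints in these sets belongs to the captured category $E_G(\smallatoms,\smallatoms\cup\bigcup\clusters)$.

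For the reverse inclusion, I would take an edge $uv\in E(\Gcapt)$ with $u\in L\cap\smallatoms$ and $v\in\bar S^0$, and systematically eliminate the three categories of captured edges to which it cannot belong. First, $uv\notin E(\Gblack)$, since $\Gblack$ is defined on $\bigcup\clusters$ while $u\in\smallatoms$ is disjoint from $\bigcup\clusters$ by Definition~\ref{bclassdef}\eqref{defBC:avoiding}. Second, $uv\notin E(\Gexp)$, because by the very definition of $\bar S^0$ we have $v\notin V(\Gexp)$. Third, $uv\notin E_G(\HugeVertices,V(G))$: the bounded decomposition is applied to $G-\HugeVertices$ by Definition~\ref{sparseclassdef}\eqref{def:spaclahastobeboucla}, so $u\in\smallatoms$ forces $u\notin\HugeVertices$; and since $\Omega^{**}>2>(1+\eta)$, every vertex of $\HugeVertices$ has degree greater than $(1+\eta)k$, giving $\HugeVertices\subseteq L$ whereas $v\in\bar S^0\subseteq S$.

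This forces $uv$ into the category $E_G(\smallatoms,\smallatoms\cup\bigcup\clusters)$, and the main obstacle is then to upgrade this to $uv\in E(\GD)$. For this step I would use Definition~\ref{def:avoiding} to place $u\in\smallatoms$ inside the vertex set of some dense spot $D=(U,W;F)\in\DenseSpots$, invoke the partition-consistency condition in Definition~\ref{bclassdef}\eqref{defBC:prepartition} to position the cluster $C\ni v$ consistently with respect to the two sides of $D$, and finally exploit the coverage property in Definition~\ref{bclassdef}\eqref{defBC:densepairs} -- which guarantees that every $G$-edge between the two sides of a dense spot lies in $E(\GD)$ -- to conclude $uv\in E(\GD)$. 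The delicate point here is to rule out the possibility that $v$ sits in a cluster that is disjoint from $V(D)$, which is precisely where the avoiding structure built into the sparse decomposition of \cite{cite:LKS-cut0} does the work.
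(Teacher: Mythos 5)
Your treatment of the easy containment and your elimination of the $\Gblack$, $\Gexp$, and $\HugeVertices$ categories are all fine, but the last step --- upgrading from $uv\in E_G(\smallatoms,\smallatoms\cup\bigcup\clusters)$ to $uv\in E(\GD)$ --- is exactly where the argument breaks down, and the strategy you sketch does not close the gap. The coverage property in Definition~\ref{bclassdef}\eqref{defBC:densepairs} applies to an edge $uv$ only when $u$ and $v$ actually lie on opposite sides of some dense spot $D=(U,W;F)$. Knowing that $u\in\smallatoms\subset\bigcup_{D\in\DenseSpots}V(D)$ sits in some $V(D)$ does not force the particular neighbour $v$ into $V(D)$: the prepartition condition $C\cap U,C\cap W\in\{\emptyset,C\}$ from Definition~\ref{bclassdef}\eqref{defBC:prepartition} explicitly allows the cluster $C\ni v$ to be entirely disjoint from $V(D)$, and the avoiding property in Definition~\ref{def:avoiding} only speaks to choosing a dense spot on $u$ that nearly misses a prescribed set of bounded size --- it gives no control whatever over where the $G$-neighbours of $u$ land, and in particular cannot place $v$ inside $V(D)$. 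So the ``delicate point'' you flag is a genuine hole, not a technicality the avoiding structure absorbs.

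The paper sidesteps the issue entirely at the level of Definition~\ref{capturededgesdef}: for the bounded decomposition sitting inside the sparse decomposition, the $\smallatoms$-part of the captured edges is, by definition, $E_{\GD}(\smallatoms,\smallatoms\cup\bigcup\clusters)$ rather than $E_G(\smallatoms,\smallatoms\cup\bigcup\clusters)$, and the proof reads the captured edges between $\smallatoms$ and $\bigcup\clusters$ accordingly --- each such edge is either in $\Gexp$ or already in $\GD$. Since $\bar S^0\cap V(\Gexp)=\emptyset$, the $\subseteq$-inclusion is then immediate, with no upgrading step to perform. You read the $E_G$ clause of Definition~\ref{capturededgesdef} literally and then tried to manufacture the $\GD$-membership from structural properties of the decomposition, which the abstract definitions alone do not support.
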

\begin{proof}[Proof of Claim~\ref{cl:S0avoid}]
The $\supset$-inclusion  of the edge-sets is clear. 

Next, recall that Definition~\ref{capturededgesdef} tells us that each edge in $\Gcapt$ between $\smallatoms$ and $\bigcup \clusters$ is either in $\Gexp$ or in $\GD$. As $\bar S^0\cap V(\Gexp)=\emptyset$, the $\subset$-inclusion follows.
\end{proof}

By Claim~\ref{cl:S0mN1}, we have
\begin{equation}\label{guanacos}
\bar S^0\setminus V(\M_A) \subset
\big(\bigcup \BSN\setminus V(N_1\setminus M)\big) \setminus V(\M_A) 
  \subset
\SR \setminus V(\M_A).
\end{equation}

 As every edge incident to
$S^0\setminus \bar S^0$ is uncaptured, we see that
\begin{align}
\label{fewfewfew:i}
E_{\Gcapt}\big(\XA\cap \smallatoms,S^0\setminus V(\mathcal
M_A)\big)
&= E_{\Gcapt}\big(\XA\cap \smallatoms,\bar S^0\setminus V(\mathcal
M_A)\big)\notag \\
\JUSTIFY{$\XA\cap \smallatoms=(L\cap \smallatoms)\setminus V(\mathcal M_B)$, C\ref{cl:S0avoid}}
& =
E_{\GD}\big((L\cap \smallatoms)\setminus V(\mathcal M_B),\bar S^0\setminus V(\mathcal
M_A)\big)\notag \\
\JUSTIFY{by \eqref{guanacos}}&\subset E_{\GD}\big(\:(L\cap \smallatoms)\setminus V(\mathcal
M_B)\:,\:\SR \setminus V(\M_A)\:\big).
\end{align}

\begin{claim}\label{cl:fewfewfew:ii}
We have
\begin{align*}
\label{fewfewfew:ii}
E_{\Gblack}\big(\XA\cap\bigcup\clusters,S^0\setminus V(\mathcal
M_A)\big)\subset  E_{\GD}\left(
\left((L\cap \smallatoms)\cup V_2(\mathcal M)\right)\setminus V_2(\mathcal M_B),
\SR \setminus V(\M_A)
\:\right).
\end{align*} 
\end{claim}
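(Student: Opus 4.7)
The plan is a direct set-chase. I would fix an arbitrary edge $xy\in E(\Gblack)$ with $x\in\XA\cap\bigcup\clusters$ and $y\in S^0\setminus V(\mathcal M_A)$, and show that $xy$ lies in the right-hand side of Claim~\ref{cl:fewfewfew:ii}. Letting $C_x\ni x$, $C_y\ni y$ be the clusters guaranteed by Definition~\ref{bclassdef}\eqref{defBC:RL}, one has $C_xC_y\in E(\BGblack)$ (the pair has density at least $\gamma^2$), and combining Definition~\ref{bclassdef}\eqref{defBC:dveapul} with~\eqref{defBC:densepairs} upgrades $xy$ from a $\Gblack$-edge to a $\GD$-edge. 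Since $y$ lies in a cluster, $y\in S^0\cap\bigcup\clusters=\bar S^0$, so $C_y\in\BSN$.

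The next step is to locate $y$ in $\SR\setminus V(\mathcal M_A)$. By Claim~\ref{cl:S0mN1}, either $C_y\in\BSR$ or $C_y\in V(N_1\setminus M)$; the second option would give $y\in V(\mathcal N_1\setminus\mathcal M)\subseteq V(\mathcal M_A)$ by~\eqref{eq:defMA}, contradicting $y\notin V(\mathcal M_A)$. Hence $y\in\SR\setminus V(\mathcal M_A)$. For $x$, Claim~\ref{cl:SRS0} gives $\BSR\subseteq\BSI$, so $C_y$ is isolated in $\BGblack-\SEPARATOR$; since $C_xC_y\in E(\BGblack)$, this forces $C_x\in\SEPARATOR$. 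Let $D$ be the $N_0$-partner of $C_x$. Since $\deg_{\BGblack}(C_x,\BSR)\ge 1$, Claim~\ref{augmiL} forces $D\in\BSI$ and~\eqref{anti-skub} forces $D\notin\BSN\setminus\BSR$; together with $\BSI\subseteq\BSN$ these yield $D\in\BSR$, whence $C_xD\in M$ and $C_x\in\SEPARATOR_M$. By the chosen orientation of $\mathcal M$ we have $V_2(\mathcal M)=\bigcup\SEPARATOR_M$, so $x\in V_2(\mathcal M)$, and since $x\in\XA=L\setminus V(\mathcal M_B)$ certainly $x\notin V_2(\mathcal M_B)$.

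Combining the two endpoint conditions with $xy\in E(\GD)$ gives the desired containment. I do not anticipate a serious obstacle here: the Gallai--Edmonds alternating-path machinery is already packaged inside Claims~\ref{cl:SRS0},~\ref{augmiL} and~\ref{cl:S0mN1}, together with~\eqref{anti-skub} and the definition~\eqref{eq:defMA} of $\mathcal M_A$, so the argument reduces to a clean case chase. The only point requiring care is keeping straight the roles and orientations of $\mathcal M$, $\mathcal N_1$, $\mathcal M_A$ and $\mathcal M_B$, and the upgrade from $\Gblack$-edges to $\GD$-edges.
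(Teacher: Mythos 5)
Your proof is correct. It is essentially the same argument as the paper's, but presented as an explicit edge-chase rather than a chain of set inclusions: the paper first reduces $\XA\cap\bigcup\clusters$ via a split into $V_2(\M)\setminus V_2(\M_B)$ and $L\setminus(\smallatoms\cup V(\M))$, then cites Claim~\ref{cl:CrossEdges1} to kill $\Gblack$-edges from the latter set into $\SR$, whereas you inline the proof of that claim by going directly through Claim~\ref{cl:SRS0}, Claim~\ref{augmiL} and~\eqref{anti-skub} to show the $\XA$-endpoint must land in $\SEPARATOR_M\subseteq\V_2(\M)$. All the pieces (Claim~\ref{cl:S0mN1} to place the $S^0$-endpoint in $\SR$, \eqref{eq:defMA} to rule out $V(\mathcal N_1\setminus\mathcal M)$, and the observation $S^0\cap\bigcup\clusters=\bar S^0$ replacing Claim~\ref{cl:S0barS0iso}) are used correctly, so the argument is sound.
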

Before proving Claim~\ref{cl:fewfewfew:ii}, let us see that it implies Lemma~\ref{prop:LKSstruct}\eqref{fewfewfew}.
As
$G\in\LKSsmallgraphs{n}{k}{\eta}$, there are no edges between $\HugeVertices$
and $S$. That means that any captured edge from $\XA$ to $S^0\setminus V(\mathcal M_A)$ must start in 
$\smallatoms$ or in $\bigcup\clusters$, and must be contained in $\GD$.
Thus Lemma~\ref{prop:LKSstruct}\eqref{fewfewfew} follows by
plugging~\eqref{eq:SeparatedM} into~\eqref{fewfewfew:i}
and into Claim~\ref{cl:fewfewfew:ii}.

Let us now turn to proving Claim~\ref{cl:fewfewfew:ii}. 
\begin{proof}[Proof of Claim~\ref{cl:fewfewfew:ii}] First, observe that by the definition of $\XA$ and by the definition of $\mathcal M$ (and $M$) we have
\begin{equation}\label{eq:Hsplit}
 \XA\cap\bigcup\clusters\subset (V_2(\mathcal M)\setminus V_2(\mathcal M_B))\cup (L\setminus (\smallatoms\cup V(\mathcal M)))\;.
\end{equation}
Further, by applying~\eqref{guanacos} and Claim~\ref{cl:CrossEdges1} we get
\begin{equation}\label{eq:H0}
E_{\Gblack}\big(L\setminus (\smallatoms\cup V(\mathcal M)),\bar S^0\setminus V(\mathcal M_A)\big)=\emptyset\;.
\end{equation}

Therefore, we obtain
\begin{align*}
E_{\Gblack}\big(\XA\cap\bigcup\clusters, S^0\setminus V(\mathcal M_A)\big)
&\eqBy{C\ref{cl:S0barS0iso}}\;
E_{\Gblack}\big(\XA\cap\bigcup\clusters, \bar S^0\setminus V(\mathcal M_A)\big)\\
\JUSTIFY{by~\eqref{eq:Hsplit}}&\;\subset\;
E_{\Gblack}\big(V_2(\mathcal M)\setminus V_2(\mathcal M_B),\bar S^0\setminus
V(\mathcal M_A)\big)\\
&~~~~\cup E_{\Gblack}\big(L\setminus (\smallatoms\cup V(\mathcal
M)),\bar S^0\setminus V(\mathcal M_A)\big)
\\
\JUSTIFY{by~\eqref{guanacos}, \eqref{eq:H0}}&\;\subset\;
E_{\Gblack}\big(V_2(\mathcal M)\setminus V_2(\mathcal M_B),\SR \setminus V(\M_A)\big)\;,
\end{align*}
as needed.
\end{proof}

In order to
prove~\eqref{newpropertyS6} we first observe that
\begin{align}
V(\mathcal N_1)\setminus V(\M_A\cup \M_B)&\eqByRef{eq:defMA} V(\mathcal
N_1)\setminus V\big( (\mathcal N_1\setminus \M)\cup \M_1\cup \M_B\big)\notag\\
&= (V(\mathcal N_1)\cap V(\mathcal \M))\setminus V(\M_B\cup \M_1)\notag\\
&\eqByRef{eq:SeparatedM} (V(\mathcal N_1)\cap V(\mathcal \M))\setminus V(\M')=  V(\M)\setminus V(\M')\;.\label{cl:matchINCL}
\end{align}

Now, we have
\begin{align*}
e_{\Gblack}(V(G)\setminus
V(\M_A\cup \M_B))&\le e_{\Gblack}(V(G)\setminus
V(\mathcal N_1))+\sum_{v\in V(\mathcal N_1)\setminus V(\M_A\cup
\M_B)}\deg_{\Gcapt}(v)\\
\JUSTIFY{by~\eqref{eq:bir1} and \eqref{cl:matchINCL}}&\le\sum_{v\in
V(\M)\setminus V(\M')}\deg_{\Gcapt}(v)\le |V(\M)\setminus V(\M')|\Omega^*k\\
\JUSTIFY{by~\eqref{eq:MsubMApp}} &<\epsilon
\Omega^* kn\;,
\end{align*}
which proves~\eqref{newpropertyS6}.
\smallskip

Let us turn to proving~\eqref{nicDoNAtom}. First, recall that we have
$V(\NAtom)\subset V(\M')\cup V(\mathcal N_1)$ (cf.~\ref{eq:defMA}). Since
$V(\mathcal N_1)\cap\smallatoms=\emptyset$ we actually have
\begin{equation}\label{eq:patchA}
V(\NAtom)=V(\NAtom)\cap V(\M')\;.
\end{equation}
Using~\eqref{eq:patchA} and~\eqref{theCandDwelike} we get
\begin{align}\nonumber
e_{\Gblack}\left(V(G)\setminus V(\mathcal
N_1),V(\NAtom)\right)&\le
e_{\Gblack}\left(V(G)\setminus V(\mathcal
N_1),V(\M')\cap \SR\right)\\
\nonumber
\JUSTIFY{by~\eqref{eq:645}}&\le 
e_{\Gblack}\left(V(G)\setminus V(\mathcal
N_1),(V(\M')\setminus V(\M))\cap \SR\right)\\
\nonumber
\JUSTIFY{by~\eqref{eq:M'Next}}&\le
e_{\Gblack}\left(V(G)\setminus V(\mathcal
N_1),(V(\M')\setminus V(\mathcal N_1))\cap \SR\right)\\
\label{eq:patchB}
&\le 2 e_{\Gblack}\left(V(G)\setminus V(\mathcal
N_1)\right)\eqByRef{eq:bir1}0\;. 
\end{align}
We have
\begin{align*}
e_{\Gblack}\left(V(G)\setminus V(\M_A\cup\M_B),V(\NAtom)\right) &\le e_{\Gblack}\left(V(G)\setminus V(\mathcal N_1),V(\NAtom)\right)\\ & \ \ +
e_{\Gblack}\left(V(\mathcal N_1)\setminus V(\M_A\cup\M_B),V(G)\right)\\
\JUSTIFY{by~\eqref{eq:patchB}}&\le 0+|V(\mathcal N_1)\setminus
V(\M_A\cup\M_B)|\Omega^* k\\
\JUSTIFY{by~\eqref{cl:matchINCL}, \eqref{eq:MsubMApp}}&\le \epsilon\Omega^* kn\;,
\end{align*}
as needed.

\smallskip

We have thus shown
Lemma~\ref{prop:LKSstruct}\eqref{prop6.1a}--\eqref{nicDoNAtom}. It only remains
to prove Lemma~\ref{prop:LKSstruct}\eqref{Mgoodisblack}, which we will do in the
remainder of this section.

\smallskip

We first collect several properties of $\XA$ and $\XC$.
The definitions of $\XC$ and $S^0$ give
\begin{equation}\label{eq:XCS0}
|\XC|(1+\eta)\frac{k}2 \le e_G\big(\XC,\SN \setminus V(\mathcal{M}_A\cup \mathcal{M}_B)\big) \le |\SN\setminus V(\mathcal{M}_A\cup \mathcal{M}_B)|(1+\eta)k\;.
\end{equation}

Each
vertex of $\XC$ has degree at least
$(1+\eta)\frac{k}2$ into $S$, and so,
\begin{equation}\label{eq:withrounding}
e_G(S,\XC)\ge |\XC|\left\lceil(1+\eta)\frac{k}2\right\rceil\;.
\end{equation}
Also, for each vertex $v\in \XC$, Definition~\ref{def:LKSsmall}\eqref{def:LKSsmallB} gives that
\begin{equation}\label{tomze}
\deg_G(v)=\lceil(1+\eta)k\rceil
\end{equation}
Consequently (using $\lceil a\rceil -\lceil\frac a2\rceil\le \frac a2$),
\begin{align}
e_G\big(\XA,\XC\big)
& \overset{\eqref{tomze}}\le  |\XC|\lceil(1+\eta)k\rceil - e_G(S,\XC)\notag \\
&\overset{\eqref{eq:withrounding}}\le  |\XC|(1+\eta)\frac{k}2\label{eq:PrI} \\
&\overset{\eqref{eq:XCS0}}\leq |\SN\setminus V(\mathcal{M}_A\cup
\mathcal{M}_B)|(1+\eta)k\;.\label{eq:eXAXC}
\end{align}

Let $\Mgood$ be defined as in Lemma~\ref{prop:LKSstruct}\eqref{Mgoodisblack},
that is, $\Mgood:=\{(X_1,X_2)\in \mathcal M_A\::\: X_1\cup X_2\subset \XA \}$. Note that~\eqref{eq:Asub} implies that $X_1\subseteq S$
for every $(X_1,X_2)\in \mathcal M_B$. Thus by the definition of $\XA$, 
\begin{equation}\label{cl:som}
\text{if $(X_1,X_2)\in \mathcal M_A\cup\mathcal M_B$ with $X_1\cup X_2\subset L$, then 
$(X_1,X_2)\in \Mgood$.}
\end{equation}

We will now prove the first part of Lemma~\ref{prop:LKSstruct}\eqref{Mgoodisblack}, that is, we show that each $\Mgood$-edge is an edge
of $\BGblack$. Indeed,
by~\eqref{theCandDwelike}, we have that $V_1(\mathcal
M_1)\subset S$, so as $\XA\cap S=\emptyset$, it
follows that $\mathcal M_1\cap \Mgood=\emptyset$. Thus  $\Mgood\subset \mathcal N_1$. As $\mathcal N_1$ corresponds to a matching in $\BGblack$, all is as desired.

Finally, let us assume that
neither~{\bf(K1)} nor~{\bf(K2)} is fulfilled. After five preliminary
observations (Claim~\ref{claim:DZ4}--Claim~\ref{claim:DZ5}), we will derive a
contradiction from this assumption.

\begin{claim}\label{claim:DZ4}
We have
$|S\cap V(\mathcal M_A)|\le|\XA\cap V(\mathcal M_A)|$.
\end{claim}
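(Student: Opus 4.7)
The plan is to prove the inequality edge by edge. Using the decomposition $\mathcal M_A=(\mathcal N_1\setminus\mathcal M)\cup\mathcal M_1$ from~\eqref{eq:defMA}, I will show that every $(X_1,X_2)\in\mathcal M_A$ satisfies
$|(X_1\cup X_2)\cap S|\le|(X_1\cup X_2)\cap \XA|$, and then sum over all edges of $\mathcal M_A$.

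For $(X_1,X_2)\in\mathcal M_1\subseteq\mathcal M'$, property~\eqref{theCandDwelike} places $X_1$ inside some cluster of $\BSR$ (so $X_1\subseteq S^0\subseteq S$), while $X_2$ sits either in $L\cap\smallatoms$ or in a cluster of $\SEPARATOR_M\subseteq\BL$; in either case $X_2\subseteq L$. Since $\mathcal M_1$ and $\mathcal M_B$ are the two parts of a partition of $\mathcal M'$, we have $X_2\cap V(\mathcal M_B)=\emptyset$, whence $X_2\subseteq L\setminus V(\mathcal M_B)=\XA$. Thus $e$ contributes $|X_1|=|X_2|$ to both sides of the desired inequality.

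For $(X_1,X_2)\in\mathcal N_1\setminus\mathcal M$, I will use two observations. First, since $G\in\LKSsmallgraphs{n}{k}{\eta}$ has $S$ independent, $\BS$ is an independent set of $\BGblack$, so the underlying cluster-edge has at least one endpoint in $\BL$; in particular $|(X_1\cup X_2)\cap L|\ge|X_1|$. Second, property~\eqref{eq:M'Next} gives $(X_1\cup X_2)\cap V(\mathcal M_B)=\emptyset$, so every $L$-vertex of $X_1\cup X_2$ automatically lies in $\XA=L\setminus V(\mathcal M_B)$. Combining these with $X_1\cup X_2\subseteq\bigcup\clusters\subseteq S\cup L$ yields
$|(X_1\cup X_2)\cap S|\le |X_1|\le|(X_1\cup X_2)\cap L|=|(X_1\cup X_2)\cap\XA|$.

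Summing the edgewise inequalities delivers the claim. The argument is pure bookkeeping, relying only on~\eqref{theCandDwelike}, \eqref{eq:M'Next}, the definition~\eqref{eq:defMA} of $\mathcal M_A$, and the independence of $\BS$ in $\BGblack$; I do not foresee any real obstacle.
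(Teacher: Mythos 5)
Your proof is correct and follows essentially the same underlying idea as the paper's: the $S$-part of each $\mathcal M_A$-pair is bounded by the $L$-part of the same pair, and $L\cap V(\mathcal M_A)\subseteq\XA$ because $V(\mathcal M_A)\cap V(\mathcal M_B)=\emptyset$. The paper's own proof is shorter because rather than unwinding the construction~\eqref{eq:defMA} and re-deriving the needed facts from~\eqref{theCandDwelike} and~\eqref{eq:M'Next}, it directly invokes the already-established assertions Lemma~\ref{prop:LKSstruct}\eqref{eq:Mspots} (each $\mathcal M_A$-vertex lies in $S$ or $L$), Lemma~\ref{prop:LKSstruct}\eqref{prop6.1a} (giving $L\cap V(\mathcal M_A)=\XA\cap V(\mathcal M_A)$), and the independence of $S$.
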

\begin{proof}[Proof of Claim~\ref{claim:DZ4}]
To see this, recall that each $\mathcal M_A$-vertex $X\in\V(\mathcal M_A)$ is
either contained in $S$, or in $L$. Further, if $X\subset S$ then
its partner in $\mathcal M_A$ must be in $L$, as $S$ is
independent. Now, the claim follows after noticing that $L\cap
V(\mathcal M_A)=\XA\cap V(\mathcal M_A)$.
\end{proof}

\begin{claim}\label{claim:DZ1}
We have $|S\setminus V(\mathcal M_A\cup \mathcal M_B)|+2\eta
n< |\XA\setminus V(\mathcal M_A)|+\eta n/3$.
\end{claim}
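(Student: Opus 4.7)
The plan is to reduce the claim to a simple counting identity. First, observe that $\XA\setminus V(\mathcal M_A) = L\setminus V(\mathcal M_A\cup\mathcal M_B)$, since by Definition~\ref{def:XAXBXC} we have $\XA = L\setminus V(\mathcal M_B)$, and $V(\mathcal M_A)\cap V(\mathcal M_B)=\emptyset$ by~\eqref{prop6.1a}. So the claim is equivalent to showing
\[
|L\setminus V(\mathcal M_A\cup\mathcal M_B)| - |S\setminus V(\mathcal M_A\cup\mathcal M_B)| > \frac{5\eta}{3}n.
\]

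The main step is the bookkeeping identity
\[
|L\cap V(\mathcal M_A\cup\mathcal M_B)| - |S\cap V(\mathcal M_A\cup\mathcal M_B)| = |V(\Mgood)|.
\]
The key input is that $S$ is independent in $G$: indeed, Definition~\ref{def:LKSsmall}\eqref{en:LKSsmall.noSS} forces every neighbour of a vertex of $S$ to have degree exactly $\lceil(1+\eta)k\rceil$, hence to lie in $L$. Together with~\eqref{eq:Mspots}, this means every edge of $\mathcal M_A\cup\mathcal M_B$ is of $S$-$L$ or $L$-$L$ type. By~\eqref{eq:lastminute}, each edge of $\mathcal M_B$ is $S$-$L$; by~\eqref{cl:som}, each $L$-$L$ edge of $\mathcal M_A\cup\mathcal M_B$ belongs to $\Mgood$; and the edges of $\mathcal M_A\setminus\Mgood$ are then forced to be $S$-$L$. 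Since $|X_1|=|X_2|$ for each $\mathcal M$-edge, each $S$-$L$ edge contributes equally to the two sides of the identity and cancels, while each $\Mgood$-edge contributes $|X_1|+|X_2|$ only to the $L$-side. Summing gives the identity.

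Finally, $G\in\LKSgraphs{n}{k}{\eta}$ gives $|L|\ge(\tfrac12+\eta)n$ and hence $|L|-|S|\ge 2\eta n$, while the standing assumption that $\mathbf{(K2)}$ fails yields $|V(\Mgood)| < \eta n/3$. Combining,
\[
|L\setminus V(\mathcal M_A\cup\mathcal M_B)| - |S\setminus V(\mathcal M_A\cup\mathcal M_B)| = (|L|-|S|) - |V(\Mgood)| > 2\eta n - \frac{\eta}{3}n = \frac{5\eta}{3}n,
\]
which rearranges to the desired inequality. There is no real obstacle in the proof; the only subtle point is noticing that the independence of $S$ restricts the structure of $\mathcal M_A\cup\mathcal M_B$ to $S$-$L$ and $L$-$L$ edges, without which the identity above would not hold.
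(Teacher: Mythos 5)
Your proposal is correct and follows essentially the same route as the paper: both reduce to the inequality $|L\setminus V(\mathcal M_A\cup\mathcal M_B)|-|S\setminus V(\mathcal M_A\cup\mathcal M_B)|>5\eta n/3$, then use $|L|-|S|\ge 2\eta n$ and observe that the only edges of $\mathcal M_A\cup\mathcal M_B$ that unbalance the $L$-side against the $S$-side are the $L$-$L$ edges, which by~\eqref{cl:som} are exactly the $\Mgood$-edges, and are bounded by $\neg\mathbf{(K2)}$. The paper phrases this as a one-line chain of inequalities rather than an explicit bookkeeping identity (and it does not strictly need the independence of $S$, since possible $S$-$S$ edges would only help the inequality), but the core calculation is identical.
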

\begin{proof}[Proof of Claim~\ref{claim:DZ1}]
As $G\in\LKSgraphs{n}{k}{\eta}$, we have $|S|+2\eta n\le
|L|$. Therefore,
\begin{align*}
|S\setminus V(\mathcal M_A\cup \mathcal M_B)|+2\eta
n\le & \ |L\setminus V(\mathcal M_A\cup \mathcal
M_B)|+\sum_{\substack{(X_1,X_2)\in\mathcal
M_A\cup\mathcal M_B\\X_1\cup X_2\subset L}}|X_1\cup X_2|\\
\eqBy{\eqref{cl:som}} & |\XA\setminus V(\mathcal M_A)|+|V(\Mgood)|\\
\lBy{$\neg${\bf(K2)}} & |\XA\setminus V(\mathcal M_A)|+\eta n/3\;.
\end{align*}
\end{proof}

\begin{claim}\label{claim:DZ2}
We have $e_{\Gcapt}\left(\XA\cap(\smallatoms\cup V(\mathcal M))
,
\SR\setminus V(\mathcal M_A)
\right)<\eta kn/2$.
\end{claim}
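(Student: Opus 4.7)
The plan is to observe that the claim is essentially a direct consequence of the separation property~\eqref{eq:SeparatedM} produced by Lemma~\ref{lem:Separate}. Specifically, I will show that $\XA\cap(\smallatoms\cup V(\mathcal M))$ coincides with the right colour class $((L\cap \smallatoms)\cup V_2(\mathcal M))\setminus V_2(\mathcal M_B)$ appearing in~\eqref{eq:SeparatedM}, and that $\SR\setminus V(\mathcal M_A)$ coincides with the left colour class $((\SR\setminus V(\mathcal N_1))\cup V_1(\mathcal M))\setminus V_1(\mathcal M_1)$ appearing there.

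For the first identification, I use $\XA=L\setminus V(\mathcal M_B)$, the fact that $V_1(\mathcal M)\subset \SR\subset S$ (so $\XA\cap V(\mathcal M)=\XA\cap V_2(\mathcal M)$), and the fact that $V_1(\mathcal M_B)\subset\SN\subset S$ by~\eqref{eq:Asub} while $V_2(\mathcal M)\subset\bigcup\SEPARATOR_M\subset L$; this gives $V(\mathcal M_B)\cap V_2(\mathcal M)=V_2(\mathcal M_B)\cap V_2(\mathcal M)$. The disjointness of $\smallatoms$ from $\bigcup\clusters$ analogously takes care of the $\smallatoms$-part, yielding the desired identity.

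For the second identification, I unfold $\mathcal M_A=(\mathcal N_1\setminus \mathcal M)\cup \mathcal M_1$; since $\mathcal M\subset\mathcal N_1$ as matchings, $V(\mathcal N_1\setminus \mathcal M)=V(\mathcal N_1)\setminus V(\mathcal M)$. Using $V_1(\mathcal M)\subset \SR$, $V_2(\mathcal M)\subset L$ (hence disjoint from $\SR$), together with the analogous statements $V_1(\mathcal M_1)\subset \SR$ and $V_2(\mathcal M_1)\cap \SR=\emptyset$ guaranteed by~\eqref{theCandDwelike}, a short set calculation yields the desired identity.

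It remains to pass from $\Gcapt$ to $\GD$ so that~\eqref{eq:SeparatedM} can be applied. Since $\SR\setminus V(\mathcal M_A)\subset \bar S^0$ is disjoint from $\HugeVertices$, $V(\Gexp)$ and $\smallatoms$, each captured edge with an endpoint in $\SR\setminus V(\mathcal M_A)$ is either in $\Gblack\subset\GD$, or is an avoiding edge from $\smallatoms$ to $\bigcup\clusters$; exactly as in the proof of Claim~\ref{cl:S0avoid}, the latter type also lies in $\GD$. Applying~\eqref{eq:SeparatedM} therefore gives $e_{\Gcapt}(\XA\cap(\smallatoms\cup V(\mathcal M)),\SR\setminus V(\mathcal M_A))<\gamma kn'\le\gamma kn<\eta kn/2$, using $\gamma<\eta/3$. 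The main work is the bookkeeping above; the substantive inequality was already secured in the application of Lemma~\ref{lem:Separate} that produced~\eqref{eq:SeparatedM}.
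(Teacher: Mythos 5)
Your proof is correct and follows essentially the same route as the paper: identify the two sides of the edge count with the colour classes of the separation inequality~\eqref{eq:SeparatedM}, apply that inequality, and pass between $\Gcapt$ and $\GD$ using that $\SR\setminus V(\mathcal M_A)$ avoids $\HugeVertices$, $V(\Gexp)$ and $\smallatoms$. The only (harmless) differences are cosmetic: you establish set equalities where the paper only needs the inclusions $\subset$, and you perform the $\Gcapt$-to-$\GD$ passage before rather than after invoking~\eqref{eq:SeparatedM}.
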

\begin{proof}[Proof of Claim~\ref{claim:DZ2}]
As 
\begin{align*}
\XA\cap (\smallatoms\cup V(\mathcal M))&\subset \left((L\cap \smallatoms)\cup
V_2(\mathcal M)\right)\setminus V_2(\mathcal M_B)\quad \mbox{and}\\
\SR\setminus V(\mathcal M_A)
&\subset
\left(\left(\SR\setminus V(\mathcal N_1)\right)\cup V_1(\mathcal
M)\right)\setminus V_1(\mathcal M_1)\;,
 \end{align*}
 we get from~\eqref{eq:SeparatedM} that
\begin{equation}\label{eq:iiModif1}
e_{\GD}\left(\XA\cap(\smallatoms\cup V(\mathcal M))
,
\SR\setminus V(\mathcal M_A)
\right)\le \gamma kn\;.
\end{equation}
Observe now that both sets $\XA\cap (\smallatoms\cup V(\mathcal M))$ and
$\SR\setminus V(\mathcal M_A)$ avoid $\HugeVertices$. Further, no edges between them belong to $\Gexp$, because Claim~\ref{cl:SRS0} implies that $\SR\setminus V(\mathcal
M_A)\subset S^0\subseteq V(G)\setminus V(\Gexp)$. Therefore, we can pass from $\GD$ to $\Gcapt$ in~\eqref{eq:iiModif1} to get
\begin{equation*}
e_{\Gcapt}\left(\XA\cap(\smallatoms\cup V(\mathcal M))
,
\SR\setminus V(\mathcal M_A)
\right)\le \gamma kn<\eta kn/2\;.
\end{equation*}
\end{proof}

\begin{claim}\label{claim:DZ3}
We have
$S\setminus (\SR\cup V(\mathcal M_A) )
\subset
S\setminus (\bar S^0\cup V(\mathcal M_A\cup\mathcal M_B))$.
\end{claim}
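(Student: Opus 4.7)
The claim is a purely set-theoretic inclusion, so I would rewrite it contrapositively: take an arbitrary $v\in S\setminus(\SR\cup V(\mathcal M_A))$ and show that $v\notin V(\mathcal M_B)$ and $v\notin\bar S^0$. Since~$v\notin V(\mathcal M_A)$ is already in the hypothesis, this is exactly what is needed.

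The plan for showing $v\notin V(\mathcal M_B)$ uses the orientation information on~$\mathcal M_B$. From~\eqref{theCandDwelike} applied to any $(X_1,X_2)\in\mathcal M_B\subseteq\mathcal M'$ we have $X_2\subseteq L\cap\smallatoms$ or $X_2\subseteq\bigcup\SEPARATOR_M$; combined with~\eqref{eq:BinL}, in either case $X_2\subseteq L$. Hence $V_2(\mathcal M_B)\subseteq L$ is disjoint from~$S$, so if $v\in S\cap V(\mathcal M_B)$ then $v\in V_1(\mathcal M_B)$, which by~\eqref{eq:Asub} lies in~$\SR$, contradicting $v\notin\SR$.

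The plan for showing $v\notin\bar S^0$ uses Claim~\ref{cl:S0mN1} together with the definition~\eqref{eq:defMA} of~$\mathcal M_A$. Assuming for contradiction that $v\in\bar S^0=\bigcup\BSN$, there is a cluster $C\in\BSN$ with $v\in C$. Because $v\notin\SR=\bigcup\BSR$, we have $C\notin\BSR$, so Claim~\ref{cl:S0mN1} forces $C\in V(N_1\setminus M)$. The matching $\mathcal N_1\setminus\mathcal M$ corresponding to $N_1\setminus M$ has vertex set $\bigcup V(N_1\setminus M)$, which by~\eqref{eq:defMA} is contained in $V(\mathcal M_A)$. Thus $v\in C\subseteq V(\mathcal M_A)$, contradicting the hypothesis.

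I do not expect any genuine obstacle here: the claim is a direct bookkeeping statement that collates Claim~\ref{cl:S0mN1}, the structural identity~\eqref{eq:defMA}, and the already established facts~\eqref{eq:Asub} and~\eqref{theCandDwelike} about where the two endpoints of an $\mathcal M_B$-edge live. The only mild care is to remember that $\mathcal N_1\setminus\mathcal M$ (and not $\mathcal N_1$ itself) is the piece absorbed into $\mathcal M_A$, which is why Claim~\ref{cl:S0mN1} is phrased with $V(N_1\setminus M)$ rather than $V(N_1)$.
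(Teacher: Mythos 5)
Your proof is correct and follows essentially the same route as the paper: the paper establishes the two inclusions $\SR\cup V(\mathcal M_A)\supset S\cap V(\mathcal M_A\cup\mathcal M_B)$ (via~\eqref{theCandDwelike}) and $\SR\cup V(\mathcal M_A)\supset \bar S^0$ (via Claim~\ref{cl:S0mN1} and~\eqref{eq:defMA}), which is exactly your element-wise argument phrased as set containments.
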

\begin{proof}[Proof of Claim~\ref{claim:DZ3}]
The claim follows directly from the following two inclusions.
\begin{align}
\label{eq:FC1}
\SR\cup V(\mathcal M_A)&\supset S\cap V(\mathcal M_A\cup\mathcal
M_B)\;\mbox{, and}\\
\label{eq:FC2}
\SR\cup V(\mathcal M_A)&\supset \bar S^0\;.
\end{align}
Now,~\eqref{eq:FC1} is trivial, as by~\eqref{theCandDwelike} we have that
$\SR\supset S\cap V(\mathcal M_B)$.
 To see~\eqref{eq:FC2}, it suffices
by~\eqref{eq:defMA} to prove that $V(N_1\setminus M)\cup\BSR\supset \BSN$.
This
is however the assertion of Claim~\ref{cl:S0mN1}.
\end{proof}

Next, we bound $e_{\Gcapt}\big(\XA,S\big)$. 
\begin{claim}\label{claim:DZ5}
We have $$e_{\Gcapt}\big(\XA,S\big)\le |S\cap V(\mathcal M_A)|(1+\eta)k+|S\setminus(\SN\cup V(\mathcal M_A\cup \mathcal M_B))|(1+\eta)k 
+\frac12\eta kn\;.$$
\end{claim}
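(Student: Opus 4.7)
The plan is to split $S$ into four pieces according to their intersection with $V(\M_A)$, $V(\M_B)$, and $\SN$, and to bound $e_{\Gcapt}(\XA,\cdot)$ on each separately. Using \eqref{prop6.1a} I write
\begin{equation*}
S \;=\; \big(S\cap V(\M_A)\big) \,\dcup\, \big(S\cap V(\M_B)\big) \,\dcup\, \big(\SN\setminus V(\M_A\cup \M_B)\big) \,\dcup\, \big(S\setminus(\SN\cup V(\M_A\cup \M_B))\big).
\end{equation*}

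For the first and fourth pieces I would use the crude bound that every $v\in S=\smallvertices{\eta}{k}{G}$ has $\deg_G(v)\le \lceil(1+\eta)k\rceil$, so at most this many captured edges from $\XA$ can meet $v$. This yields the two ``main'' terms $|S\cap V(\M_A)|(1+\eta)k$ and $|S\setminus(\SN\cup V(\M_A\cup\M_B))|(1+\eta)k$ in the claim (rounding being absorbed into the $\tfrac12\eta kn$ slack, using $\eta k n/2 \gg n$).

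The remaining two pieces contribute the error term $\tfrac12\eta kn$. The point is that both are subsets of $\SN\setminus V(\M_A)$, and are disjoint. The inclusion $\SN\setminus V(\M_A\cup\M_B)\subset \SN\setminus V(\M_A)$ is immediate. For the other, I first claim $S\cap V(\M_B)= V_1(\M_B)$: by \eqref{eq:lastminute}, $V_1(\M_B)\subset \SN\subset S$, while $V_2(\M_B)\cap S=\emptyset$ because \eqref{eq:Mspots} forces each $(X_1,X_2)\in \M_B$ to satisfy $X_2\subset S$ or $X_2\subset L$, and $X_2\subset S$ is impossible since $S$ is independent (by Definition~\ref{def:LKSsmall}\eqref{en:LKSsmall.noSS}, every neighbour of an $S$-vertex has degree exactly $\lceil(1+\eta)k\rceil\ge (1+\eta)k$ and hence lies in $L$) whereas $(X_1,X_2)$ has density at least $\beta>0$. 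Now \eqref{prop6.1a} gives $V_1(\M_B)\cap V(\M_A)=\emptyset$, so $V_1(\M_B)\subset \SN\setminus V(\M_A)$, and it is disjoint from $\SN\setminus V(\M_A\cup\M_B)$. Applying the already-established assertion \eqref{fewfewfew} then yields
\begin{equation*}
e_{\Gcapt}\big(\XA,\, (S\cap V(\M_B))\cup(\SN\setminus V(\M_A\cup \M_B))\big)\;\le\; e_{\Gcapt}\big(\XA,\SN\setminus V(\M_A)\big)\;\le\; \gamma kn \;<\; \tfrac12\eta kn,
\end{equation*}
the last inequality using $\gamma<\eta/3$. Summing the four bounds gives the claim.

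I do not anticipate a serious obstacle here; the one subtle point is to bound the two ``error'' pieces jointly by a single application of \eqref{fewfewfew}, rather than applying it twice and losing a factor of $2$ in the $\gamma kn$ slack (which would break the final $\tfrac12\eta kn$ estimate).
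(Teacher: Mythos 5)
Your proof is correct, and it takes a genuinely different route from the paper's. The paper bounds $e_{\Gcapt}(\XA,S)$ by splitting $S$ as $(S\cap V(\M_A))\cup(S\setminus(\SR\cup V(\M_A)))\cup(\SR\setminus V(\M_A))$, further splitting on the $\XA$ side for the last piece, and then invoking the ancillary Claims~\ref{cl:CrossEdges1}, \ref{claim:DZ2}, and~\ref{claim:DZ3} together with the degree bound on $S$. Your version instead partitions $S$ into $(S\cap V(\M_A))$, $(S\cap V(\M_B))$, $(\SN\setminus V(\M_A\cup\M_B))$, and $(S\setminus(\SN\cup V(\M_A\cup\M_B)))$, handles the first and last by the degree bound, and then observes that the two middle pieces are disjoint subsets of $\SN\setminus V(\M_A)$, so that the already-established assertion~\eqref{fewfewfew} gives their joint contribution directly. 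The key supporting fact you verify — $S\cap V(\M_B)=V_1(\M_B)$ — follows cleanly from \eqref{eq:lastminute}, \eqref{eq:Mspots}, and the independence of $S$. What your route buys: it reuses the top-level conclusion~\eqref{fewfewfew} instead of re-deriving its ingredients, which would render Claims~\ref{claim:DZ2} and~\ref{claim:DZ3} unnecessary (they are used nowhere else in the paper). One small remark: the rounding concern you raise for pieces (a) and (d) is actually vacuous, since every $v\in S=\smallvertices{\eta}{k}{G}$ has $\deg_G(v)<(1+\eta)k$ outright, not merely $\le\lceil(1+\eta)k\rceil$; so the bound $|\cdot|(1+\eta)k$ holds with no slack needed there. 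Your final remark about applying~\eqref{fewfewfew} once to the union rather than twice is the right instinct, and is what keeps the error term within $\gamma kn<\eta kn/3<\tfrac12\eta kn$.
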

\begin{proof}[Proof of Claim~\ref{claim:DZ5}]
We have 
\begin{align*}
e_{\Gcapt}\big(\XA,S \big)
=& \ e_{\Gcapt}\big(\XA,S\cap V(\mathcal M_A)\big)
+  e_{\Gcapt}\big(\XA,S\setminus(\SR\cup V(\mathcal M_A))\big) 
\\ 
&\mbox{~~}
+e_{\Gcapt}\big(\XA\setminus (\smallatoms\cup V(\mathcal M)),\SR\setminus V(\mathcal M_A)\big)
+e_{\Gcapt}\left(\XA\cap(\smallatoms\cup V(\mathcal M)),\SR\setminus V(\mathcal
M_A)\right)\;.
\end{align*}
To bound the first term we use that each vertex in $S\cap V(\mathcal M_A)$ has degree at most $(1+\eta) k$, and thus obtain $e_{\Gcapt}(\XA,S\cap V(\mathcal M_A))\le |S\cap V(\mathcal M_A)|(1+\eta)k$. To bound the second term, we again use a bound on degree of vertices of $S\setminus\big((\SR\cup V(\mathcal
M_A))\cup(S^0\setminus \bar S^0))$, together with Claim~\ref{claim:DZ3}. The third term is zero by Claim~\ref{cl:CrossEdges1}. The fourth term can be bounded by Claim~\ref{claim:DZ2}.
\addtocounter{theorem}{1}
\end{proof}

A relatively short double counting below will lead to the final contradiction. The idea behind this computation is given in Figure~\ref{fig:strukturaContradiction}.
\begin{figure}[t] \centering
\includegraphics[scale=0.9]{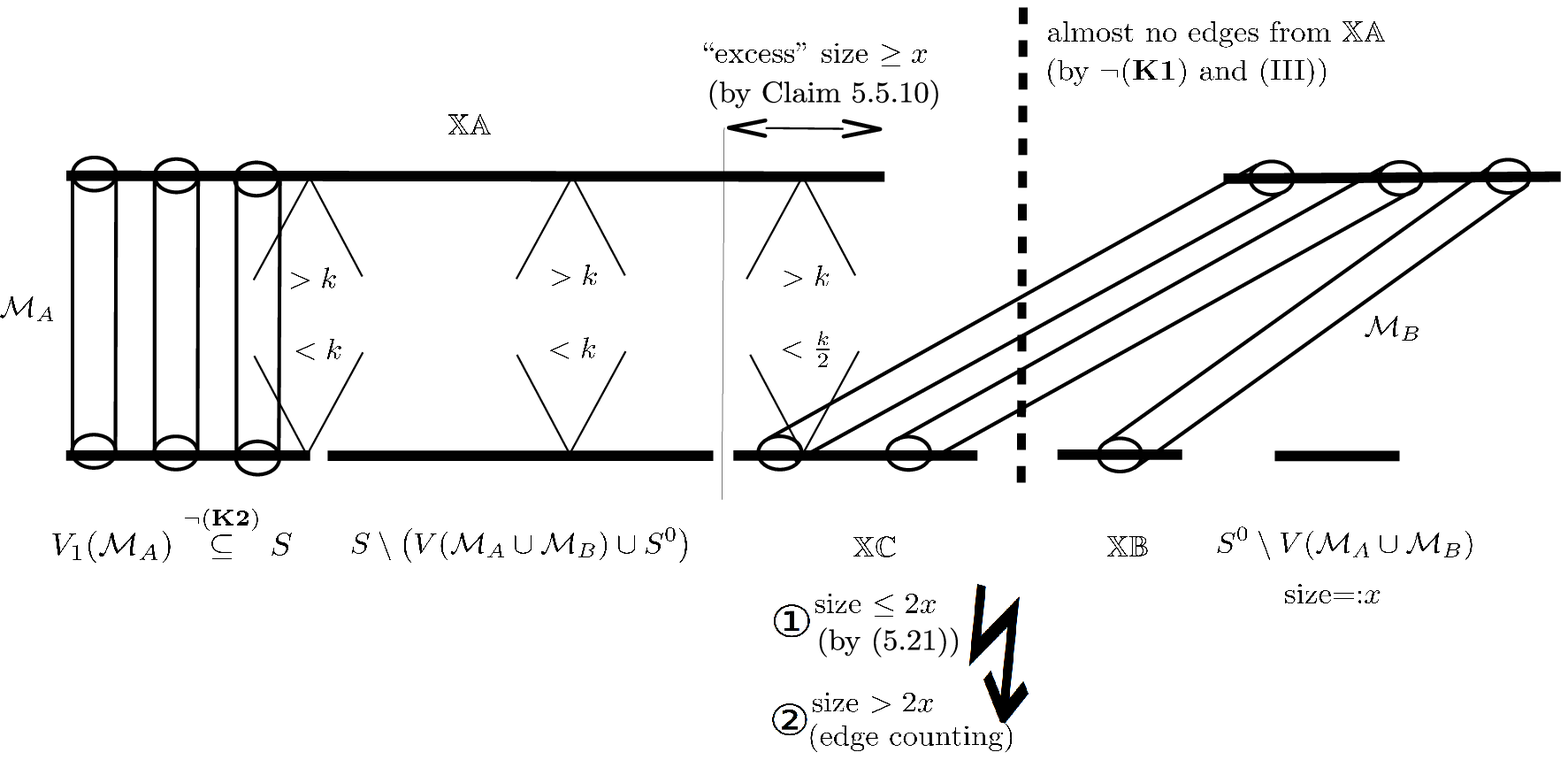}
\caption[Contradiction in Lemma~\ref{prop:LKSstruct}]{A simplified computation showing that $\neg {\bf(K1)}$, $\neg {\bf(K2)}$ leads to a contradiction. Denoting by $x$ the size of $S^0\setminus V(\M_A\cup\M_B)$ we get \ding{172} $|\XC|\le 2x$. On the other hand, each vertex of $\XA$ emanates $\gtrsim k$ edges which are absorbed by the sets $V_1(\M_A)$, $S\setminus (V(\M_A\cup\M_B)\cup S^0)$, and $\XC$. The vertices of $V_1(\M_A)$ and $S\setminus (V(\M_A\cup\M_B)\cup S^0)$ can absorb $\lesssim k$ edges. The vertices of $\XC$ receive $\lesssim\frac k2$ edges of $\XA$ by~\eqref{eq:PrI}. This leads to \ding{173} $|\XC|> 2x$, doubling the size of the ``excess'' vertices of $\XA$.} 
\label{fig:strukturaContradiction}
\end{figure}
\begin{align}
\begin{split}\label{eq:MCS}
|\XA|(1+\eta)k&\le \sum_{v\in \XA}\deg_G(v) \le \sum_{v\in \XA}\deg_{\Gcapt}(v)+2\big(e(G)-e(\Gcapt)\big)\\
& \le
2e_{\Gcapt}(\XA)+e_{\Gcapt}(\XA,\XB)+e_{\Gcapt}\big(\XA,\XC\big)+ e_{\Gcapt}\big(\XA,S\big)+\frac{\eta
kn}3\\[6pt]
\JUSTIFY{by $\neg${\bf(K1)}, \eqref{eq:eXAXC}, C\ref{claim:DZ5}}
&\le \frac 76\eta kn+ \big|\SN\setminus V(\mathcal{M}_A\cup
\mathcal{M}_B)\big|(1+\eta)k\\
&\mbox{~~~~~~}+ |S\cap V(\mathcal M_A)|(1+\eta)k\\
&\mbox{~~~~~~}+    |S\setminus(\SN\cup V(\mathcal M_A\cup \mathcal M_B))|(1+\eta)k
\\[6pt]
\JUSTIFY{by C\ref{claim:DZ4}}
&\le \frac 76\eta kn+|S\setminus V(\mathcal M_A\cup\mathcal
M_B)|(1+\eta)k\\
&\mbox{~~~~~~}+|\XA\cap V(\mathcal M_A)|(1+\eta)k\\[6pt]
\JUSTIFY{by C\ref{claim:DZ1}}
&\le \frac 76 \eta kn+\big(|\XA\setminus V(\mathcal M_A)|-\frac53\eta
n\big)(1+\eta)k\\
 &\mbox{~~~~~~}+|\XA\cap V(\mathcal M_A)|(1+\eta)k
\\[6pt]
&< |\XA|(1+\eta)k-\frac12\eta kn\;,
\end{split}
\end{align}
a contradiction. This completes the proof of Lemma~\ref{prop:LKSstruct}.
\end{proof}

\section{Acknowledgements}\label{sec:ACKN}
The work on this project lasted from the beginning of 2008 until 2014
and we are very grateful to the following institutions and funding bodies for
their support. 

\smallskip

During the work on this paper Hladk\'y was also affiliated with Zentrum
Mathematik, TU Munich and Department of Computer Science, University of Warwick. Hladk\'y was funded by a BAYHOST fellowship, a DAAD fellowship, 
% of the German Academic Exchange Service (DAAD), the grant Agency of
Charles University grant GAUK~202-10/258009, EPSRC award EP/D063191/1, and by an EPSRC Postdoctoral Fellowship during the work on the project. 

Koml\'os and Szemer\'edi acknowledge the support of NSF grant
DMS-0902241.

Piguet has been also affiliated with the Institute of Theoretical Computer Science, Charles University in Prague, Zentrum
Mathematik, TU Munich, the Department of Computer Science and DIMAP,
University of Warwick, and the School of Mathematics, University of Birmingham. Piguet acknowledges the support of the Marie Curie fellowship FIST,
DFG grant TA 309/2-1, a DAAD fellowship,
% of the German Academic Exchange Service (DAAD)
Czech Ministry of
Education project 1M0545,  EPSRC award EP/D063191/1,
and  the support of the EPSRC
Additional Sponsorship, with a grant reference of EP/J501414/1 which facilitated her to
travel with her young child and so she could continue to collaborate closely
with her coauthors on this project. This grant was also used to host Stein in
Birmingham.  Piguet was supported by the European Regional Development Fund (ERDF), project ``NTIS --- New Technologies for Information Society'', European Centre of Excellence, CZ.1.05/1.1.00/02.0090.

Stein was affiliated with the Institute of Mathematics and Statistics, University of S\~ao Paulo, the Centre for Mathematical Modeling, University of Chile and the Department of Mathematical Engineering, University of Chile. She was
supported by a FAPESP fellowship, and by FAPESP travel grant  PQ-EX 2008/50338-0, also
CMM-Basal,  FONDECYT grants 11090141 and  1140766. She also received funding by EPSRC Additional Sponsorship EP/J501414/1.

We enjoyed the hospitality of the School of Mathematics of University of Birmingham, Center for Mathematical Modeling, University of Chile, Alfr\'ed R\'enyi Institute of Mathematics of the Hungarian Academy of Sciences and Charles University, Prague, during our long term visits.

The yet unpublished work of Ajtai, Koml\'os, Simonovits, and Szemer\'edi on the Erd\H{o}s--S\'os Conjecture was the starting point for our project, and our solution crucially relies on the methods developed for the Erd\H{o}s-S\'os Conjecture. Hladk\'y, Piguet, and Stein are very grateful to the former group for explaining them those techniques.

\medskip
A doctoral thesis entitled \emph{Structural graph theory} submitted by Hladk\'y in September 2012 under the supervision of Daniel Kr\'al at~Charles University in~Prague is based on the series of the papers~\cite{cite:LKS-cut0,cite:LKS-cut1, cite:LKS-cut2, cite:LKS-cut3}. The texts of the two works overlap greatly. We are grateful to PhD committee members Peter Keevash and Michael Krivelevich. Their valuable comments are reflected in the series. 

\bigskip
We thank the referees for their very detailed remarks.

\bigskip
The contents of this publication reflects only the authors' views and not necessarily the views of the European Commission of the European Union.

\printindex{mathsymbols}{Symbol index}
\printindex{general}{General index}

\newpage
\addcontentsline{toc}{section}{Bibliography}
\bibliographystyle{alpha}
\bibliography{bibl}
\end{document}